

\documentclass[a4paper,english]
{article}

\usepackage{amssymb,amsmath}
\headheight=0in \headsep =-0.3in \topmargin=0in
\textwidth=7.35in
\textheight=10.25in
\oddsidemargin=-0.3in%
\evensidemargin=-0.3in%
\parindent=0.2in

\makeindex

\input xy  \xyoption{all}

\newtheorem{deff}{Definition}[section]
\newtheorem{lemma}[deff]{Lemma}
\newtheorem{theorem}[deff]{Theorem}
\newtheorem{corollary}[deff]{Corollary}

\newtheorem{proposition}[deff]{Proposition}

\newtheorem{facts}[deff]{Facts}
\newtheorem{em-example}[deff]{Example}
\newtheorem{em-def}[deff]{Definition}        
\newtheorem{em-remark}[deff]{Remark}         
\newtheorem{em-question}[deff]{Question}


\newenvironment{example}{\begin{em-example} \em }{ \end{em-example}}
\newenvironment{definition}{\begin{em-def} \em  }{ \end{em-def}}
\newenvironment{remark}{\begin{em-remark} \em }{\end{em-remark}}
\newenvironment{question}{\begin{em-question}\em }{\end{em-question}}
\newenvironment{proof}{\noindent {\it Proof}.}{\QED \smallskip}


\newcommand{\ol}[1]{\overline{#1}}

\newcommand{\wt}[1]{\widetilde{#1}}

\newcommand\QED{\hfill QED \medskip}

\newcommand{\tcal}{\cal {T}}


\def\hull#1{\langle#1\rangle}

\def\ker{\mathop{\rm ker}}

\def\:{\nobreak \hskip .1111em\mathpunct {}\nonscript \mkern
-\thinmuskip {:}\hskip .3333emplus.0555em\relax}

\def\nbd{neighborhood}
\catcode`\@=12
\def\T{{\mathbb T}}
\def\Q{{\mathbb Q}}

\def\Z{{\mathbb Z}}
\def\N{{\mathbb N}}
\def\R{{\mathbb R}}
\def\Q{{\mathbb Q}}
\def\P{{\mathbb P}}

\def\Prm{\P}

\def\cont{\mathfrak c}

\title{{ Locally minimal topological groups 2}
\thanks{The first named author was partially supported by MTM 2008-04599. The other authors were partially supported by Spanish MICINN MTM2009-14409-C02-01. The third author  was partially supported also by SRA, grants P1-0292-0101 and J1-9643-0101. }
\\{ \normalsize To the memory of Ivan Prodanov (1935-1985)}
}
\author{
Lydia Au\ss{}enhofer \\{\small\em Welfengarten 1,} {\small\em 30167 Hannover, Germany.} {\small\em e-mail: aussenho@math.uni-hannover.de}\\
 M. J. Chasco \\ {\small\em Dept. de F\'{\i}sica y Matem\'{a}tica Aplicada,} {\small\em Universidad  de Navarra, Spain.}
{\small\em e-mail: mjchasco@unav.es}\\
 Dikran Dikranjan \\{\small\em Dipartimento di Matematica e Informatica, Universit\`{a} di Udine, Italy.} {\small\em e-mail: dikranja@dimi.uniud.it}\\
 Xabier Dom\'{\i}nguez\\ {\small\em Departamento de M\'{e}todos Matem\'{a}ticos y de
Representaci\'{o}n,} {\small\em Universidad de A Coru\~{n}a, Spain.} {\small\em e-mail: xdominguez@udc.es}
}


\begin{document}

\maketitle

\begin{abstract}
We continue in this paper the study of locally minimal groups started in \cite{LocMin}.

The minimality criterion for dense subgroups of compact
groups is extended to local minimality. Using this criterion we characterize the compact
abelian groups containing dense countable locally minimal subgroups,
as well as those  containing dense locally minimal subgroups of countable free-rank.
We  also characterize the compact abelian groups whose torsion part is dense
 and locally minimal.

We call a topological group $G$ {\it almost minimal} if it has a closed, minimal normal subgroup $N$ such that the quotient group $G/N$ is uniformly free from small subgroups. The class of almost minimal groups includes all locally compact groups, and is contained in the class of locally minimal groups. On the other hand, we provide examples of countable precompact metrizable locally minimal groups which are not almost minimal. Some other significant properties of this new class are obtained.

Keywords: locally minimal group, minimal group,  locally essential subgroup, essential subgroup,   almost minimal group,  GTG set, locally GTG group,  group without small subgroups,  locally quasi-convex group

MSC 22A05, 22B05
\end{abstract}
\section{Introduction}

Minimal topological groups were introduced  independently by Choquet, Do\" \i tchinov \cite{Do1} and
Stephenson \cite{St}: a Hausdorff topological group $(G,\tau)$ is called minimal
if there exists no Hausdorff group topology on $G$ which is strictly coarser than $\tau$.
Various generalizations of minimality have been defined and intensively studied (\cite{meg}, \cite{DMeg}).
The  notion of local minimality
was introduced by Morris and  Pestov in \cite{MP} (see also \cite{TaB} and \cite{DM}).
A Hausdorff topological group is locally minimal with respect to
a neighborhood of zero $U$ if it does not admit a strictly coarser Hausdorff group topology for which $U$ is still a
neighborhood of zero.
This paper continues the study started in \cite{LocMin}, where the authors went further into some aspects of local minimality. We focus here mainly on two aspects: finding a ``local minimality criterion"{} and introducing and studying the class of locally minimal groups that can be obtained as extensions of  minimal groups via UFSS quotient groups.

The problem of finding a local minimality criterion comes out as a natural question, if we take into account the crucial role which was played in the theory of minimal groups by the so-called ``minimality criterion"{}, due to
Banaschewski  \cite{B}, Stephenson \cite{St} and Prodanov \cite{P1}; namely,  the characterization of those
dense subgroups of a minimal group that are minimal as topological
subgroups (we recall it in Theorem \ref{Min_Crit}). In \S \ref{loc_ess_loc_min} we obtain a counterpart of this
criterion (Theorem \ref{crit}) for local minimality based on the new notion of a
{\em locally essential subgroup} (see Definition \ref{LocEsse}).

The rest of Section 3 and the whole of Section 4 are devoted to characterizing different classes of precompact locally minimal abelian groups using the local minimality criterion. Note that the study of local minimality in locally precompact abelian groups can be reduced to the corresponding problem for precompact groups; this is a consequence of the local minimality criterion and the fact that locally precompact abelian groups are exactly the subgroups of locally compact abelian groups, whose structure is well known. In Subsection 3.2 we characterize the dense locally minimal subgroups of finite-dimensional compact groups (Proposition  \ref{corol0}). In Section 4 we characterize the  compact abelian groups which contain a ``small" dense locally minimal subgroup, where by ``small"{} we mean countable (Theorem \ref{theocorol1}), of countable free rank (Theorem \ref{corol2}) or torsion (Theorem \ref{ExTor}). Each one of these results is based on a known counterpart for minimal groups. Actually, we show that in many cases precompact abelian locally minimal groups turn out to be minimal (such is the case for subgroups of torsion-free compact groups), or close to minimal.

%

We next consider locally quasi-convex groups. We recall the definition and basic properties of this class in Subsection 5.1. A natural example of a locally quasi-convex group is the underlying group of a locally convex space; locally compact abelian and precompact abelian groups are always locally quasi-convex.
The first link between the properties of local quasi-convexity and local minimality that we explore here concerns the class of UFSS groups. A Hausdorff topological group is UFSS (Uniformly Free from Small Subgroups) if its topology is generated by a single neighborhood of zero in a natural analogous way as the unit ball of a normed space determines its topology.  As we showed in  \cite{LocMin},  these UFSS groups constitute an important subclass of locally minimal groups. Continuing the study of the algebraic structure of locally minimal groups that we started in \cite[\S 5.2]{LocMin}, we show that for every increasing sequence $(m_n)$ of natural numbers and every prime $p$, the group $\bigoplus_{n\in \N} \Z(p^{m_n})$ admits a non-discrete locally quasi-convex UFSS group topology (Theorem \ref{Ex_Lydia}). Note that by contrast, the group $\Z(p^\infty)$ does not admit a minimal group topology for any prime $p$ ((3.5.4) in \cite{DPS}).

The second main goal of this paper is to replace local minimality by a stronger property that still covers local compactness, UFSS and minimality (in the abelian case), but goes closer to them in the following natural sense: A topological group $G$ is called {\em almost minimal} if it has a closed, minimal normal subgroup $N$ such that the quotient group $G/N$ is UFSS. Every almost minimal abelian group is locally minimal (Theorem \ref{char_alm_min}). Further, it will be shown that  every locally quasi-convex locally minimal
group can be embedded into an almost minimal group (Theorem \ref{embedding}). This embedding allows us to show that
every complete locally quasi-convex locally minimal group is already almost minimal
 (Corollary \ref{completion_slm}).  In particular, every locally compact abelian group is almost  minimal.
Complete almost minimal abelian groups 
are \v Cech complete, hence $k$-spaces and also Baire spaces.

 We give an example of a countable precompact locally minimal group which is not almost minimal (Example \ref{counterexample}), and show that one can actually construct ${\mathfrak c}$-many pairwise non-isomorphic groups with these properties (Proposition \ref{Rem_Anti_StrongLM_}). These examples show that 
 locally quasi-convex locally minimal groups which are not complete need not be almost minimal, and also that a locally essential dense subgroup of a compact group may fail to be almost minimal.
\section{Background }

\paragraph{Notation and terminology}

The subgroup generated by a subset $X$ of a group $G$ is denoted by $\hull{X}$, and $\hull{x}$ is the cyclic subgroup of $G$ generated
by an element $x\in G$. The abbreviation $K\leq G$ is used to denote a subgroup  $K$ of $G$.
Since we deal mainly with abelian groups, we use additive notation, and denote by $0$ its neutral element. For a subset $A$ of an abelian group $G$ and $m\in \N$, we write $A+\buildrel m \over \dots+A:=\{ a_1+\cdots+a_m\, : \, a_i \in A,\, i\in \{1,\cdots, m\}\}.$

We denote by $\N$ and $\Prm$ the sets of positive natural numbers
and primes, respectively; by $\N_0$ the set $\N\cup\{0\}$; by $\Z$ the integers, by $\Q$ the rationals, by $\R$ the reals, and by $\T$ the unit circle group
which is identified with $\R/\Z$.
We will denote by $q$ the canonical projection from $\R$ to $\T.$ The cyclic group of order $n>1$ is denoted by $\Z(n)$. For a  prime $p$ the symbol $\Z(p^\infty)$
stands for the  quasicyclic $p$-group and $\Z_p$ stands for the $p$-adic integers. The cardinality of the continuum $2^{\omega}$ will be also denoted by $\cont$.

The {\it torsion part\/} $t(G)$ of an abelian group $G$ is the set $\{x\in G: nx=0\ {\rm for\ some}\ n\in\N\}$. Clearly, $t(G)$ is a
subgroup of $G$.  For any $p\in {\mathbb P}$, the {\em $p$-primary component} $G_p$ of $G$ is the subgroup of $G$ that consists of all $x\in G$
satisfying $p^nx=0$ for some positive integer $n$. For every $n\in\N$, we put $G[n]=\{x\in G: nx=0\}$. We say that $G$ is {\it bounded}
if $G[n]=G$ for some $n\in \N$. If $p\in {\mathbb P},$ the $p$-{\it rank\/} of $G$, $r_p(G)$, is defined as the cardinality of
a maximal independent subset of $G[p]$ (see \cite[Section 4.2]{Rob}). The group $G$ is {\it divisible} if $nG=G$ for every
$n\in \N$, and {\it reduced}, if it has no divisible subgroups beyond $\{0\}$.  The {\em free rank\/} $r_0(G)$ of the group $G$ is the
cardinality of a maximal independent subset of $G$. The {\em socle} of $G,$ $Soc(G),$ is the subgroup of $G$ generated by all elements of prime order, i. e. $Soc(G)=\bigoplus_{p\in {\mathbb P}}\,G[p].$

We denote by ${\cal V}_\tau(0)$ (or simply by ${\cal V}(0)$) the filter of neighborhoods
of the neutral element $0$ in a topological group $(G, \tau)$. Neighborhoods are not necessarily open.

For a topological group $G$ we denote by $\widetilde{G}$ the Ra\u \i kov completion of $G$. We recall here that a group $G$ is {\it
precompact\/} if $\widetilde{G}$ is compact (some authors prefer the term ``totally bounded").

We say that a topological group $G$ {\em is linear} or {\em is linearly topologized} if it has a neighborhood basis at $0$ formed by open subgroups.


By a  {\em character} on an abelian topological group $G$ it is commonly understood a continuous homomorphism from $G$ into $\mathbb{T}$. Under pointwise addition the  characters on $G$ constitute a group  $G^{\wedge}$ called the {\em dual group} or
{\em character group} of $G$.
%
%
%
%
Endowed with the compact open topology $ \tau_{co}$, it becomes a Hausdorff topological group. A basis of
neighborhoods of the neutral element for the  compact open topology $\tau_{co}$ is given by the sets $(K, \mathbb{T}_+): = \{\chi \in
G^{\wedge}: \chi(K) \subseteq \mathbb{T}_+\}$, where $\mathbb{T}_+ :=q([1/4,1/4])$
and $K$ is a compact subset of $G$. The natural evaluation mapping from $G$ to $(G^{\wedge},\tau_{co})^{\wedge}$ will be denoted by $\alpha_G$, that is, $\alpha_G(x)(\chi)=\chi(x)$ for every $\chi \in G^{\wedge}.$ Obviously, $\alpha_G$ is injective if and only if the characters of $G$ separate the points; in that case we say that $G$ is {\em maximally almost periodic.}
\par

Let $U$ be a symmetric subset of a group $(G,+)$ such that $0\in U,$ and $n\in \N$. We define
$(1/n)U: = \{x\in G\,:\, kx\in U \;\forall k\in\{1,2,\cdots, n\}\}$ and $U_\infty:=\{x \in G\,:\, nx \in U \;\forall n\in \N\}.$ Note that for a symmetric subset $U$ of a vector space we have  $\displaystyle(1/n)U=\bigcap_{k=1}^n \frac{1}{k}U.$

All unexplained terms concerning topological spaces and topological groups can be found in \cite{Eng} and \cite{Arh}. For background on abelian groups, see \cite{Fuc} and \cite{Rob}.
\section{Local minimality criterion} \label{loc_ess_loc_min}
\subsection{Locally essential subgroups and  local minimality criterion}
A Hausdorff
topological group $(G, \tau)$ is {\em locally minimal} if there exists a
neighborhood $V$ of $0$ such that whenever $\sigma\leq \tau$  is a Hausdorff
group topology on $G$ such that $V$ is a $\sigma$-neighborhood of $0$, then
$\sigma=\tau$. If we want to point out that the neighborhood $V$ witnesses local
minimality for $(G,\tau)$ in this sense, we say that $(G,\tau)$ is
{\em $V$-locally minimal.} If local minimality of a group $G$ is witnessed by some $V\in {\cal V}_\tau(0)$, then every smaller $U\in
{\cal V}_\tau(0)$ witnesses local minimality of $G$ as well. Examples for locally minimal groups are minimal groups and  locally compact groups. Every open subgroup of a locally minimal group is locally minimal (\cite[Proposition 2.4]{LocMin}).

We recall here the notion of an essential subgroup: a subgroup $H$ of a topological group $G$ is {\it essential} if
$$
 \mbox{ for every closed normal subgroup }N\mbox{ of }G, \;\;
 [H\cap N=\{0\}\;\; \Rightarrow\;\;  N=\{0\}]
$$
The following minimality criterion played a central role in the study of minimal groups (\cite[Theorem 2.5.1]{DPS}):

\begin{theorem}\label{Min_Crit} {\rm (\cite{B}, \cite{P1}, \cite{St})} Let $G$ be a  Hausdorff topological group and let $H$ be a dense subgroup of $G$.
Then $H$ is minimal iff $G$ is   minimal and $H$ is essential in $G$.
\end{theorem}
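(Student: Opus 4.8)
The plan is to prove the two implications of Theorem~\ref{Min_Crit} separately; throughout, $\tau|_H$ denotes the subspace topology on $H$.

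\textbf{Forward direction.} Assume $H$ is minimal and dense in $G$. To see that $G$ is minimal, let $\sigma\leq\tau$ be a Hausdorff group topology on $G$; then $\sigma|_H\leq\tau|_H$ is a Hausdorff group topology on $H$, so minimality of $H$ gives $\sigma|_H=\tau|_H$. Thus the identity map $(G,\tau)\to(G,\sigma)$ is a continuous bijective homomorphism which restricts to a homeomorphism on the dense subgroup $H$; a routine argument — approximating points of a sufficiently small $\sigma$-neighbourhood of $0$ by elements of $H$, using that intersections with $H$ of $\tau$-neighbourhoods are $\sigma|_H$-neighbourhoods and that $H$ is $\sigma$-dense — shows such a map is a homeomorphism, so $\sigma=\tau$. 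Next, for essentiality, let $N$ be a closed normal subgroup of $G$ with $N\cap H=\{0\}$ and let $\pi\colon G\to G/N$ be the canonical projection. Then $\pi|_H$ is an injective continuous homomorphism, so it induces on $H$ a Hausdorff group topology $\sigma\leq\tau|_H$; minimality of $H$ forces $\sigma=\tau|_H$, i.e.\ $\pi|_H$ is a topological embedding of $(H,\tau|_H)$ into $G/N$, with dense image. Given $x\in N$, pick a net $(h_\alpha)$ in $H$ with $h_\alpha\to x$ in $G$; then $\pi(h_\alpha)\to\pi(x)=0$ in $G/N$, and since $\pi|_H$ is an embedding with $0=\pi(0)\in\pi(H)$ we get $h_\alpha\to 0$ in $(H,\tau|_H)$, hence in $G$. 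As $G$ is Hausdorff, $x=0$, so $N=\{0\}$.

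\textbf{Converse direction.} Assume $G$ is minimal and $H$ is essential and dense in $G$, and let $\sigma\leq\tau|_H$ be a Hausdorff group topology on $H$. Let $(\widehat H,\widehat\sigma)$ be the completion of $(H,\sigma)$. The identity map of $H$, regarded as a continuous homomorphism $(H,\tau|_H)\to(H,\sigma)\hookrightarrow\widehat H$, is defined on a dense subgroup of $\widetilde G$ — here $\widetilde G$ coincides with the completion of $(H,\tau|_H)$ by density of $H$ — and takes values in the complete group $\widehat H$, hence extends to a continuous homomorphism $f\colon\widetilde G\to\widehat H$. Its kernel $\ker f$ is a closed normal subgroup of $\widetilde G$, so $N:=\ker f\cap G$ is a closed normal subgroup of $G$; moreover $N\cap H=\ker f\cap H=\{0\}$, because $\sigma$ is Hausdorff and hence $H$ embeds into $\widehat H$ via $f$. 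By essentiality, $N=\{0\}$, i.e.\ $f|_G\colon G\to\widehat H$ is injective. Being a continuous injective homomorphism, $f|_G$ pulls back the topology of $\widehat H$ to a Hausdorff group topology on $G$ coarser than $\tau$, which by minimality of $G$ must equal $\tau$; that is, $f|_G$ is a topological embedding. Restricting to $H$, on which $f$ is the identity and the subspace topology inherited from $\widehat H$ is $\sigma$, we conclude $\tau|_H=\sigma$. Hence $H$ is minimal.

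\textbf{Main obstacle.} The heart of the matter is the converse, and in particular pinpointing where essentiality is used: one cannot expect the extended map $f$ itself to be injective, since $\ker f$ may be a nontrivial closed subgroup of $\widetilde G$ meeting $G$ only in $0$; the argument must therefore pass through the closed normal subgroup $N=\ker f\cap G$ of $G$ and apply essentiality there, after which minimality of $G$ is precisely what upgrades the resulting coarser group topology on $G$ back to $\tau$. One must also handle the completion machinery carefully — extension of continuous homomorphisms into Ra\u \i kov-complete groups, and the identification of $\widetilde G$ with the completion of $(H,\tau|_H)$ — because the statement concerns arbitrary, not necessarily abelian, topological groups. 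The two auxiliary facts invoked (a continuous bijective homomorphism restricting to a homeomorphism on a dense subgroup is a homeomorphism, and the net computation in the essentiality step) are routine but should be written out explicitly.
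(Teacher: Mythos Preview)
The paper does not itself prove Theorem~\ref{Min_Crit}; it is quoted as a classical result with references. The closest thing to a proof in the paper is the proof of the local analogue, Theorem~\ref{crit}, which specialises (take $V=G$) to a proof of the minimality criterion. Your argument is correct, and in the forward direction it is essentially the same as that specialisation: minimality of $G$ via the density argument, and essentiality via the observation that the quotient topology coincides with $\tau|_H$ (the paper phrases the last step as ``the $\tau$-closures $\overline{(U+N)\cap H}$ form a $\tau$-basis and all contain $N$'', your net computation says the same thing).

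In the converse direction your route is genuinely different. The paper (again specialising Theorem~\ref{crit}) extends $\sigma$ to a topology $\tau'$ on $G$ \emph{directly}, by declaring $\{\overline{U}^{\,\tau}:U\in\mathcal V_\sigma(0)\}$ a neighbourhood basis at $0$; it then checks $\tau'\le\tau$, uses essentiality to prove $\tau'$ is Hausdorff, and invokes minimality of $G$. You instead pass through completions: extend the identity to $f\colon\widetilde G\to\widehat H$, apply essentiality to $N=\ker f\cap G$, and then minimality of $G$ to the initial topology induced by $f|_G$. Both arguments are valid. The paper's construction is more elementary---no completion machinery, no extension theorem---and, crucially, it localises: the same template yields the local criterion with only cosmetic changes. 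Your completion argument is cleaner conceptually, but it does not localise in an obvious way (a neighbourhood $V$ does not survive passage to $\widehat H$), and it leans on facts about Ra\u\i kov completions and extension of homomorphisms in the non-abelian case that, as you note, need to be stated carefully.
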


We propose now a ``local" version  of essentiality:

\begin{definition}\label{LocEsse}  Let $H$ be a subgroup of a topological group $G$. We say
that $H$ is {\em locally essential} in $G$ if there exists a neighborhood $V$ of $0$ in $G$
such that  $H\cap N=\{0\}$ implies $N=\{0\}$ for all closed normal subgroups $N$ of $G$ contained in $V$.

When necessary, we shall say $H$ {\em is locally essential with respect to} $V$ to indicate that
$V$ witnesses local essentiality.
Note that
if $V$ witnesses local essentiality,
then any smaller neighborhood of zero does, too.\end{definition}

\begin{remark}\label{rem_loc_ess} (a) Clearly, essential subgroups are also locally essential (simply take $V=G$).

(b)  The essentiality coincides in the case of discrete groups with the known notion of essentiality in algebra. In contrast with this,
every subgroup of a discrete group is locally essential, i.e., local essentiality becomes vacuous in the discrete case.

\end{remark}


\begin{proposition}\label{loc_ess}
Let $G$ be a topological abelian group and let $H$ be a subgroup of $G$.
\begin{itemize}

 \item[(a)] If $H$ is locally essential in $G$ with respect to a neighborhood $U$ of 0, then for every closed subgroup $N$ of $G$, $N\cap H$ is locally essential in $N$ with respect to $N\cap U.$
 \item[(b)] If $V$ is an  open subgroup of $G$, then   $H$ is a  locally essential subgroup of $G$ with respect to $V$ iff $H\cap V$ is an essential subgroup of $V$.

 \item[(c)] If $H$ is locally essential in $G$ with respect to a neighborhood $U$ of 0 in $G$, then for every closed and linearly topologized subgroup $N$ of $G$, there exists an open subgroup $V$ of $N$ with $V\subseteq U$ and such that $H\cap V$ is essential in $V$.
\end{itemize}
\end{proposition}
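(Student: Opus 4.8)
The plan is to prove the three items in the order given, using (a) and (b) as stepping stones toward (c). Two elementary facts will be invoked throughout and make all the bookkeeping harmless: in an abelian group every subgroup is normal, so "closed normal subgroup" may simply be read as "closed subgroup"; and an open subgroup is automatically closed, so a subset of an open subgroup $V$ is closed in $V$ if and only if it is closed in the ambient group. These are precisely what allow one to pass freely between "closed normal subgroup of $G$" and "closed subgroup of a (closed or open) subgroup".

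For (a), I would start from a closed subgroup $M$ of $N$ with $M\subseteq N\cap U$ and $(N\cap H)\cap M=\{0\}$. Since $N$ is closed in $G$, $M$ is closed in $G$; moreover $M\subseteq U$, and $H\cap M=(H\cap N)\cap M=\{0\}$ because $M\subseteq N$. Local essentiality of $H$ in $G$ with respect to $U$ then forces $M=\{0\}$, which is exactly what must be checked.

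For (b), both implications run on the same computation. If $H$ is locally essential in $G$ with respect to the open (hence closed) subgroup $V$, and $N$ is a closed subgroup of $V$ with $(H\cap V)\cap N=\{0\}$, then $N$ is a closed subgroup of $G$ contained in $V$ with $H\cap N=\{0\}$, so $N=\{0\}$; thus $H\cap V$ is essential in $V$. Conversely, if $H\cap V$ is essential in $V$ and $N$ is a closed subgroup of $G$ with $N\subseteq V$ and $H\cap N=\{0\}$, then $N$ is closed in $V$ and $(H\cap V)\cap N=H\cap N=\{0\}$, so essentiality gives $N=\{0\}$, i.e.\ $H$ is locally essential in $G$ with respect to $V$. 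The only point to keep straight is the closed-in-$V$ versus closed-in-$G$ equivalence, which holds because $V$ is open.

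For (c), I would combine the previous items. By (a), $N\cap H$ is locally essential in $N$ with respect to $N\cap U$. Since $N$ is linearly topologized it has an open subgroup $V$ with $V\subseteq N\cap U\subseteq U$, and because any neighborhood of $0$ smaller than a witness of local essentiality is again a witness (Definition \ref{LocEsse}), $N\cap H$ is locally essential in $N$ with respect to $V$ as well. Applying (b) inside the topological group $N$, with open subgroup $V$ and subgroup $N\cap H$, yields that $(N\cap H)\cap V=H\cap V$ is essential in $V$; this $V$ has all the required properties. I expect no genuine obstacle here: the whole argument is a matter of carefully tracking which ambient group a subgroup is "closed in" and "essential in", the one mildly delicate step being the reduction in (c), where the linear topology of $N$ is used exactly to produce an open subgroup below $N\cap U$ so that the equivalence in (b) can be triggered.
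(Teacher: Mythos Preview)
Your proof is correct and follows essentially the same approach as the paper's own proof: (a) is dispatched directly from the definition, (b) hinges on the fact that an open subgroup is closed so that closedness in $V$ and in $G$ agree, and (c) is obtained by first applying (a) to pass to $N$, then using the linear topology to find an open subgroup $V\subseteq N\cap U$, and finally invoking (b) inside $N$. The paper's proof is simply a terser version of exactly these steps.
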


\begin{proof}  (a)  is trivial and (b) is an easy consequence of the fact that a nontrivial subgroup of $V$ is closed in $G$ if and only if it is closed in $V$.

(c) By (a) $N\cap H$ is a locally essential subgroup of $N$ with respect to the neighborhood $U\cap N$ of 0 in $N$. Now one can pick an open subgroup $V$ of $N$
contained in $U \cap N$ and apply (b) to $V$,  $N\cap H$ and $N$.
\end{proof}

Proposition \ref{loc_ess}(c), when applied to the particular case $N=G$, shows that the notion of local essentiality can be simplified further when the  group $G$ is abelian and has a linear topology. This
is also a good evidence that the term ``locally essential" has been chosen appropriately.

We now give a criterion for local minimality.

\begin{theorem}\label{crit} Let $H$ be a dense subgroup of a topological
group $G$. Then $H$ is locally minimal iff $G$ is locally minimal and $H$ is locally
essential in $G$.
\end{theorem}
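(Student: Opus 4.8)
The proof should mirror the classical minimality criterion (Theorem \ref{Min_Crit}), with "essential" replaced by "locally essential" and "coarser Hausdorff topology" replaced by "coarser Hausdorff topology for which a fixed neighborhood stays a neighborhood". Throughout I would work with the Ra\u\i kov completion, using that $\wt H = \wt G$ since $H$ is dense in $G$ (after noting $G$ itself embeds densely in $\wt G$, so WLOG one may even assume $G = \wt G$ is complete in the hard direction, though that is not strictly necessary). The two implications are handled separately.

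For the ``if'' direction, assume $G$ is $V$-locally minimal and $H$ is locally essential in $G$ with respect to some neighborhood $W$ of $0$ in $G$; I would pick a symmetric neighborhood $U$ of $0$ in $G$ with $U + U \subseteq V \cap W$ and show $H$ is $(U\cap H)$-locally minimal. So let $\sigma$ be a Hausdorff group topology on $H$ with $\sigma \leq \tau|_H$ and $U \cap H$ a $\sigma$-neighborhood of $0$; I must show $\sigma = \tau|_H$. The standard device is to push $\sigma$ up to $G$: let $\wt\sigma$ be the (a priori possibly non-Hausdorff) group topology on $G=\wt H$ having as a neighborhood base at $0$ the $\sigma$-closures in $G$ of $\sigma$-neighborhoods of $0$ in $H$ --- equivalently, extend $\sigma$ along the density of $H$. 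One checks $\wt\sigma \leq \tau$ and that the $\wt\sigma$-closure $N$ of $\{0\}$ is a closed normal subgroup of $(G,\tau)$ contained in the $\tau$-closure of $U\cap H$, hence in $U + U \subseteq W$. Since $\wt\sigma$ induces a Hausdorff topology on $H$ coarser than $\sigma|_H=\sigma$... more carefully: $N \cap H$ is $\wt\sigma$-dense in $N$ and sits inside the $\sigma$-closure of $\{0\}$ in $H$, which is $\{0\}$ because $\sigma$ is Hausdorff; so $N \cap H = \{0\}$, and local essentiality with respect to $W$ forces $N = \{0\}$, i.e. $\wt\sigma$ is Hausdorff. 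Now $\wt\sigma$ is a Hausdorff group topology on $G$ with $\wt\sigma \leq \tau$ and (the $\tau$-closure of, hence) $U$ a $\wt\sigma$-neighborhood of $0$; wait --- one needs $U$ itself, which follows since $\overline{U\cap H}^\tau \supseteq U$ is false, so instead I take $U$ small enough that $\overline{U \cap H}^{\tau} \subseteq V$ and note $\overline{U\cap H}^\tau$ is a $\wt\sigma$-neighborhood of $0$ contained in $V$; replacing $V$ by this set and using that smaller neighborhoods still witness local minimality, $V$-local minimality of $G$ gives $\wt\sigma = \tau$, whence $\sigma = \wt\sigma|_H = \tau|_H$.

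For the ``only if'' direction, assume $H$ is $U_0$-locally minimal for some neighborhood $U_0$ of $0$ in $H$; choose a neighborhood $U$ of $0$ in $G$ with $U \cap H \subseteq U_0$. First, local essentiality: suppose $N$ is a closed normal subgroup of $G$ with $N \subseteq U$ and $N \cap H = \{0\}$; then the quotient topology pulled back along $H \hookrightarrow G \to G/N$ gives a Hausdorff group topology $\sigma$ on $H$ (Hausdorff because $N$ is closed and $N\cap H=\{0\}$) with $\sigma \leq \tau|_H$, and $(U \cap H)\supseteq$ the preimage of a neighborhood of $0$ in $G/N$, so $U\cap H \subseteq U_0$ is a $\sigma$-neighborhood of $0$; $U_0$-local minimality gives $\sigma = \tau|_H$, which means the map $H \to G/N$ is a topological embedding; since $H$ is dense in $G$, $G/N$ is dense in itself... rather, $H/(H\cap N)=H$ dense in $G/N$ and the embedding being open forces $N = \overline{\{0\}}$ in $G/N$ pulled back, i.e. $N \cap \overline{H}=N$ meets... cleanly: the embedding condition says $N \cap V' = \{0\}$ for some neighborhood, forcing $N=\{0\}$ since $N\subseteq U$ and is $\tau$-closed while the subspace topology on a neighborhood of $0$ in $N$ must be both discrete and non-discrete unless $N=\{0\}$; I would phrase this via: $H$ being $\sigma$-dense in $G/N$ and $\sigma=\tau|_H$ means $\overline{H}^{G/N}=G/N$ has the property that the quotient map $G\to G/N$ restricted to a $\tau$-neighborhood is injective, impossible for $N\ne\{0\}$ closed. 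Second, local minimality of $G$: given a Hausdorff $\sigma' \leq \tau$ on $G$ with $U$ a $\sigma'$-neighborhood of $0$, restrict to $H$: $\sigma'|_H \leq \tau|_H$ is Hausdorff with $U\cap H\subseteq U_0$ a $\sigma'|_H$-neighborhood, so $\sigma'|_H = \tau|_H$ by $U_0$-local minimality; since $H$ is $\tau$-dense (hence $\sigma'$-dense) in $G$ and $\sigma'|_H = \tau|_H$, the topologies $\sigma'$ and $\tau$ agree on the dense subgroup $H$ and both are group topologies, so they coincide on $G$ (a group topology is determined by its restriction to a dense subgroup via closures of neighborhoods). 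Hence $G$ is $U$-locally minimal.

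\textbf{Main obstacle.} The delicate point is the ``if'' direction, specifically producing the Hausdorff group topology $\wt\sigma$ on $G$ from $\sigma$ on $H$ and controlling where its ``kernel'' $N = \overline{\{0\}}^{\wt\sigma}$ sits --- one must ensure $N$ lands inside the witnessing neighborhood $W$ for local essentiality \emph{and} that the extended topology, once shown Hausdorff, still has the required neighborhood as a $\wt\sigma$-neighborhood of $0$ to invoke $V$-local minimality of $G$. This requires choosing the various neighborhoods ($U$, $V$, $W$) in the right nested order at the outset and carefully using that closures can only enlarge sets by a controlled amount (an extra summand $U$). The analogous bookkeeping in the classical proof is trivial because $V=G$; here it is the real content. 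I also expect a minor subtlety in the ``only if'' direction in rigorously extracting $N=\{0\}$ from the embedding statement, but that is routine point-set topology of topological groups.
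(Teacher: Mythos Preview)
Your approach is essentially the same as the paper's: in the ``if'' direction you extend $\sigma$ to $G$ by taking $\tau$-closures of $\sigma$-neighborhoods (the paper calls this topology $\tau'$), show its non-Hausdorff kernel is a $\tau$-closed normal subgroup landing inside the local-essentiality witness and meeting $H$ trivially, then invoke local minimality of $G$; in the ``only if'' direction you use the quotient-pullback topology for local essentiality and the restriction-then-density argument for local minimality of $G$, exactly as the paper does.

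Two places where your bookkeeping needs tightening to match what actually works. First, in the local-essentiality half of ``only if'': to get $U\cap H$ (hence $U_0$) as a $\sigma$-neighborhood you need $(U'+N)\cap H \subseteq U\cap H$ for some neighborhood $U'$, which requires $N\subseteq U'$ with $U'+U'\subseteq U$, not merely $N\subseteq U$; so the witnessing neighborhood for local essentiality should be $U'$, not $U$ (the paper uses $W_1$ with $W_1+W_1\subseteq W$). Second, your extraction of $N=\{0\}$ from $\sigma=\tau|_H$ is muddled; the clean argument is that the sets $\overline{(O+N)\cap H}^{\tau}$, for $O$ open in $\tau$, form a $\tau$-neighborhood basis at $0$ (since $\sigma=\tau|_H$ and closures of a dense-subgroup neighborhood basis recover the ambient basis), each of them contains $N$ (as $O+N$ is open and $H$ is dense), and $\tau$ is Hausdorff. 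Your identification of the main obstacle --- controlling where $\overline{\{0\}}^{\wt\sigma}$ sits and simultaneously keeping $V$ a $\wt\sigma$-neighborhood --- is exactly right and is handled in the paper by the single choice $V_1+V_1\subseteq V$.
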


\begin{proof} Assume first that $H$ is locally minimal. Let $\tau$ be the
topology of $G$ and let $W\in {\cal V}_{\tau}(0)$ be a closed neighborhood
such that $V=W\cap H$ witnesses local minimality of $H$.

\medskip

(a)  Every $W_1 \in {\cal V}_{\tau}(0)$ satisfying $W_1+W_1\subseteq W$ witnesses local essentiality of $H$ in $G$.

Let $N$ be a closed normal subgroup of $G$ contained in $W_1$ and such that $N\cap H=\{0\}$, then $H\cap (W_1+N) \subseteq V$. Denote by $\sigma$ the
group topology induced on $H$ by the restriction of the canonical map  $f: G\to G/N$. A basic neighborhood of zero for this topology is
$H\cap (U+N)$ where $U$ is a zero neighborhood in $(G,\tau)$. In particular $V$ is a neighborhood of $0$ also in $\sigma$. The topology $\sigma$ is
Hausdorff, since $N$ is closed and $H\cap N=\{0\}$.   By the local minimality of  $H$ we conclude that $\sigma =\tau|_H$. Let us see that this implies $N=\{0\}$:
  The sets of the form $\ol{(U+N)\cap H}$ where $U$ runs through the open neighborhoods of $0$ in $(G,\tau)$ form a neighborhood basis of $0$ in
$(G,\tau)$ and contain $N$, since $N+U$ is open. Since $\tau $ is Hausdorff, it follows that $N=\{0\}$. Consequently $W_1$ witnesses local essentiality of  $H$.

(b) $W$ witnesses local minimality of $G$.

Indeed, let  $\sigma \leq \tau$ be a Hausdorff group topology on $G$ such that $W\in {\cal V}_{\sigma}(0)$. Then $\sigma|_H$ is
Hausdorff, $\sigma|_H\leq \tau|_H$ and $V$ is a $\sigma|_H$-neighborhood of $0$ in $H$. Since $H$ is locally minimal
(with respect to $V$) we can claim that $\sigma |_H=\tau|_H$. For every $U\in {\cal V}_{\tau}(0)$ there exists $U_1\in {\cal
V}_{\tau}(0)$ such that $U_1+U_1\subseteq U$. Now $U_1\cap H$ is $\tau|_H$-open and hence also $\sigma |_H$-open. Then there exists a
$\sigma$-open $O\in {\cal V}_{\sigma}(0)$ such that $O\cap H \subseteq U_1$. Then the  $\tau$-density of $H$ in $G$ yields
$O\subseteq \overline{O\cap H}^\tau\subseteq U_1+U_1\subseteq U$. Hence $U\in {\cal V}_{\sigma}(0)$.

\medskip
In the opposite direction, if $V\in {\cal V}_{\tau}(0)$ simultaneously witnesses local minimality of $G$ and local essentiality of $H$ in $G$,
then for every neighborhood $V_1\in {\cal V}_{\tau}(0)$ with  $V_1+V_1\subseteq V$ the neighborhood $V_1\cap H$ witnesses local minimality of $H$.

Indeed, fix a Hausdorff group topology $\sigma$ on $H$ such that $V_1\cap H\in{\cal V}_{\sigma}(0)$ and $\sigma\le \tau|_H.$ We need to show that $\sigma=\tau|_H.$

From the fact that $\tau$ is a group topology, it is easy to derive that the family $\{\overline{U}^{\tau}:U\in {\cal V}_{\sigma}(0)\} $ is a basis of neighborhoods of $0$ for a group topology $\tau'$ on $G$.

We have $\tau'\le \tau$ since for every $U\in {\cal V}_{\sigma}(0)$ there exists a $\tau$-open $W\in {\cal V}_{\tau}(0)$ such that $W\cap H\subseteq U$ and in particular $\overline{U}^{\tau}\supseteq \overline{W\cap H}^{\tau}=\overline{W}^{\tau}\supseteq W$ (here we use that $H$ is dense and $W$ is open).

Moreover, $V\in {\cal V}_{\tau'}(0)$ since
$$V\supseteq V_1+V_1\supseteq \overline{V_1}^{\tau}\supseteq \overline{V_1\cap H}^{\tau}\in {\cal V}_{\tau'}(0)$$

Finally, $\tau'$ is a Hausdorff group topology: Note that the subgroup $$\overline{\{0\}}^{\tau'}=\bigcap_{U\in {\cal V}_{\sigma}(0)} \overline{U}^{\tau}$$
is $\tau$-closed, normal and contained in $V$ (as $V$ is a $\tau'$-neighborhood of $0$). Moreover,
$$\Big(\bigcap_{U\in {\cal V}_{\sigma}(0)} \overline{U}^{\tau}\Big)\cap H=\bigcap_{U\in {\cal V}_{\sigma}(0)} \overline{U}^{\tau}\cap H=\bigcap_{U\in {\cal V}_{\sigma}(0)} \overline{U}^{\tau|_H}\subseteq \bigcap_{U\in {\cal V}_{\sigma}(0)} \overline{U}^{\sigma}=\{0\}$$
and from local essentiality of $H$ we deduce $\bigcap_{U\in {\cal V}_{\sigma}(0)} \overline{U}^{\tau}=\{0\}.$

Since $(G,\tau)$ is $V$-locally minimal we deduce $\tau=\tau'$. This trivially implies $\sigma\ge\tau|_H.$

\end{proof}

 \begin{example}\label{t_m}

For any nonempty set $B$ of primes put $K_B=\prod_{p\in B}\Z(p)$. Fix an arbitrary infinite set $A$ of primes and consider a dense subgroup $G$ of $K_A$. Then $G$ is locally minimal iff $G$ contains all but a finite number of the subgroups $\Z(p)$, $p\in A$. [According to Theorem \ref{crit}, $G$ is locally minimal iff $G$ is locally essential in $K_A.$ Every neighborhood of $0$ in 
$K_A$ contains an open subgroup of $K_A$ of the form  $V=K_{A'},$ where $A'$ is a 
cofinite subset of $A$. By item (b) of Proposition \ref{loc_ess}, $G$ is locally essential with respect
to  $V$  iff $G \cap V$ is an essential subgroup of $V$. A simple application of the Chinese Remainder Theorem shows that every non-zero closed subgroup of  $K_{A'}$  has the form  $K_B$, where $B$ is a nonempty subset of $A'$. Hence, $G \cap V$ is an essential subgroup of $V$ iff $G$ contains all subgroups $\Z(p)$, $p\in A'$.]
\end{example}

Next we highlight some consequences of Theorem \ref{crit}:
\begin{corollary} \label{Crit2}
\begin{itemize}
  \item[(a)] The class of locally  minimal groups is closed under completion. Actually, a more general result is true: if $G$ has a dense locally minimal subgroup $H$, then any subgroup $H'$ with  $H\le H'\le G$ is locally minimal, too.
  \item[(b)] Let $G$ be a group that is either locally compact or minimal. Then a dense subgroup $H$ of $G$ is locally minimal iff $H$ is locally essential in $G$.
 \item[(c)]  If $G$ is a locally minimal topological abelian group and if $H$ is a dense  subgroup of $G$ such that for some  linearly topologized closed non-discrete subgroup $N$ of $G$ one has $H\cap N=\{0\}$, then $H$ is not locally minimal.
 \end{itemize}
\end{corollary}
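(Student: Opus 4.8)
The plan is to derive all three items from the local minimality criterion (Theorem \ref{crit}), using in addition Proposition \ref{loc_ess}(c) for item (c). The underlying routine observation is that local essentiality is monotone in the subgroup: if $H$ is locally essential in $G$ with respect to $V$ and $H\le H'\le G$, then $H'$ is locally essential in $G$ with respect to the same $V$, since $H\cap N\le H'\cap N$ for every closed normal $N\le V$.

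For (a), I would proceed as follows. Let $H$ be a dense locally minimal subgroup of $G$ and let $H\le H'\le G$. The forward direction of Theorem \ref{crit} applied to $H$ gives that $G$ is locally minimal and $H$ is locally essential in $G$; by the monotonicity noted above, $H'$ is locally essential in $G$ as well, and $H'$ is obviously dense in $G$. The backward direction of Theorem \ref{crit}, applied now to the dense subgroup $H'$, then yields that $H'$ is locally minimal. The statement about closure under completion is the special case $H'=G=\widetilde{H}$, where $H$ is an arbitrary locally minimal group regarded as a dense subgroup of its Ra\u \i kov completion.

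For (b), I would simply recall that locally compact groups and minimal groups are locally minimal (as noted right after the definition of local minimality). Hence, for such a $G$, the hypothesis ``$G$ is locally minimal'' in Theorem \ref{crit} is automatically satisfied, and the criterion reduces precisely to the asserted equivalence: a dense subgroup $H$ of $G$ is locally minimal iff $H$ is locally essential in $G$.

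For (c), I would argue by contradiction. Suppose $H$ is locally minimal. By Theorem \ref{crit}, $H$ is locally essential in $G$, say with respect to a neighborhood $U$ of $0$. Since $N$ is a closed, linearly topologized subgroup of the abelian group $G$, Proposition \ref{loc_ess}(c) provides an open subgroup $V$ of $N$ with $V\subseteq U$ such that $H\cap V$ is essential in $V$. But $V\subseteq N$ gives $H\cap V\subseteq H\cap N=\{0\}$, so $\{0\}$ is essential in $V$; applying essentiality to the closed normal subgroup $V$ of $V$ itself forces $V=\{0\}$. Then $\{0\}$ is open in $N$, i.e.\ $N$ is discrete, contradicting the hypothesis. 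The only place requiring care is this last item: one cannot work directly with $N$, since without the linear-topology assumption there need be no open subgroup of $N$ on which full essentiality (rather than mere local essentiality) holds; it is exactly Proposition \ref{loc_ess}(c) that bridges this gap. Beyond this, I expect no genuine obstacle, only bookkeeping.
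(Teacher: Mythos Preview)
Your proof is correct and follows essentially the same route as the paper: all three items are derived from Theorem~\ref{crit}, with the monotonicity of local essentiality for (a) and Proposition~\ref{loc_ess}(c) for (c). Your write-up simply makes explicit the details that the paper leaves terse.
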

\begin{proof} (a) $H$ is locally essential in $G$, so is the bigger subgroup $H'$. (b) is immediate.
(c) Assume that $H$ is locally minimal. Then $H$ must be locally essential in $G$ by Theorem \ref{crit}. This will be witnessed by some open
 neighbourhood $U$ of $0$ in $G$. By Proposition \ref{loc_ess}(c), $U$ contains an open subgroup $V$ of $N$ such that $H\cap V$ is an essential subgroup of $V$. But $H\cap V\subseteq H\cap N=\{0\}$, hence $V=\{0\}$, a contradiction.
\end{proof}

Recall that a topological group $(G,\tau)$ is called {\em NSS group} (No Small Subgroups) if a suitable neighborhood
$V\in{\cal V}(0)$ contains only the trivial subgroup.
The topological group $(G,\tau)$ is called {\em NSnS group} (No Small normal Subgroups) if a suitable neighborhood $V\in{\cal V}(0)$ contains only the trivial normal subgroup.
The distinction between NSS and NSnS will be necessary only when we consider non-abelian groups or non-compact  groups (\cite[32.1]{stroppel}). It is clear that if $G$ has no small normal subgroups,  then  every  subgroup  $H$ of $G$ is locally essential, so we obtain the following corollary.


\begin{corollary}\label{Crit3}  Let $H$ be a dense subgroup of a locally  minimal NSnS group $G$. Then  $H$   is locally minimal.
\end{corollary}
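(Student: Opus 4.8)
The statement follows directly by combining Theorem~\ref{crit} with the observation recorded just before the corollary. The plan is to verify the two hypotheses of Theorem~\ref{crit}: that $G$ is locally minimal, and that $H$ is locally essential in $G$. The first is given outright. For the second, I would invoke the NSnS property: by definition there is a neighborhood $V\in{\cal V}(0)$ containing only the trivial normal subgroup of $G$. Then any closed normal subgroup $N$ of $G$ with $N\subseteq V$ is already $N=\{0\}$ (indeed $N$ need not even be closed for this to work), so the implication $H\cap N=\{0\}\Rightarrow N=\{0\}$ holds vacuously for such $N$; hence $H$ is locally essential with respect to $V$, for \emph{any} subgroup $H$, in particular for our dense $H$.

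Having established both hypotheses, Theorem~\ref{crit} applies and yields that the dense subgroup $H$ is locally minimal. That is the entire argument; there is essentially no computational content, only the bookkeeping of matching the hypotheses of the criterion.

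\textbf{Main obstacle.} There is no real obstacle here — the corollary is a one-line consequence of Theorem~\ref{crit}. The only point requiring the slightest care is making sure that the witnessing neighborhood for NSnS can be taken to be the \emph{same} neighborhood that witnesses local minimality: but this is immediate, since if local minimality of $G$ is witnessed by some $U\in{\cal V}_\tau(0)$ then any smaller neighborhood witnesses it too (as noted after the definition of local minimality), so one may intersect $U$ with the NSnS-neighborhood $V$ and use the intersection to witness both properties simultaneously, then appeal to Theorem~\ref{crit} with that common neighborhood.

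\begin{proof} Since $G$ is NSnS, there is $V\in{\cal V}(0)$ containing no nontrivial normal subgroup of $G$; shrinking $V$ if necessary, we may also assume $V$ witnesses the local minimality of $G$. Then for every closed normal subgroup $N$ of $G$ with $N\subseteq V$ we have $N=\{0\}$, so in particular $H\cap N=\{0\}$ implies $N=\{0\}$; thus $H$ is locally essential in $G$ with respect to $V$. By Theorem~\ref{crit}, $H$ is locally minimal.
\end{proof}
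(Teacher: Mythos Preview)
Your proof is correct and follows exactly the same route as the paper: the NSnS property makes every subgroup of $G$ locally essential (this is the observation recorded in the sentence immediately preceding the corollary), so Theorem~\ref{crit} applies directly. Your extra remark about arranging a common witnessing neighborhood is harmless but unnecessary, since Theorem~\ref{crit} as stated only requires $G$ to be locally minimal and $H$ to be locally essential, without demanding that the same $V$ witness both.
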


\begin{remark} Corollary \ref{Crit3} provides a direct proof of the fact that every subgroup of a Lie group is locally minimal (note that any closed subgroup of a Lie group is still a Lie group). \end{remark}

It is known that a group having a dense NSnS subgroup need not be NSnS (e.g., take a dense cyclic subgroup of $\T^\N$ \cite[Remark 2.16(a)]{LocMin}).
We now show that this cannot occur when the dense subgroup is locally essential.

\begin{proposition} \label{denselocess}
If $H$ is a dense locally essential subgroup of a group $G$, then $H$ is NSnS iff $G $ is NSnS.
\end{proposition}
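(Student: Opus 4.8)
The plan is to prove both implications by contraposition, exploiting the fact that local essentiality localizes the problem to a single witnessing neighborhood. Fix a neighborhood $V$ of $0$ in $G$ witnessing that $H$ is locally essential. One implication is nearly trivial: if $G$ is NSnS, then some $W\in{\cal V}(0)$ contains only the trivial normal subgroup of $G$, and then $W\cap H$ contains only the trivial normal subgroup of $H$ (a subgroup of $H$ closed under $H$-conjugation need not be $G$-normal, so here one should really argue that $H$ is dense, so normality in $H$ propagates: the closure in $G$ of a normal subgroup of $H$ is normal in $G$, and if the original subgroup sits inside $W\cap H$ for a suitably small $W$ with $\overline{W}$ still inside the NSnS witness, its closure is a trivial normal subgroup of $G$, forcing triviality). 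So $H$ is NSnS. The real content is the converse.

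For the converse, assume $H$ is NSnS, witnessed by $U\cap H$ for some $U\in{\cal V}_\tau(0)$, and suppose toward a contradiction that $G$ is not NSnS. Shrinking if necessary, pick $W\in{\cal V}(0)$ with $\overline{W}\subseteq V\cap U$ and $W+W\subseteq V\cap U$ (using continuity of addition and regularity of the group topology). Since $G$ is not NSnS, there is a nontrivial closed normal subgroup $N$ of $G$ with $N\subseteq W$ — here one must check one can arrange $N$ \emph{closed}: an arbitrary normal subgroup inside $W$ has closure $\overline{N}$ still normal and, if $\overline{W}\subseteq V$, still inside $V$, so replacing $N$ by $\overline{N}$ we may assume $N$ is closed and $N\subseteq V$. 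Now apply local essentiality of $H$ with respect to $V$: since $N$ is a nontrivial closed normal subgroup of $G$ contained in $V$, we get $H\cap N\neq\{0\}$.

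Finally I claim $H\cap N$ is a nontrivial normal subgroup of $H$ sitting inside the NSnS witness $U\cap H$, which is the contradiction. It is nontrivial by the previous paragraph; it lies in $U\cap H$ because $N\subseteq W\subseteq U$; and it is normal in $H$ because $N$ is normal in $G$ (so $H$-conjugates of elements of $H\cap N$ lie in $H\cap N$). This contradicts the NSnS-ness of $H$, completing the proof. The main obstacle — and the only place requiring care — is the bookkeeping that lets one assume the offending subgroup $N$ is simultaneously closed, normal in $G$, and contained in the local-essentiality witness $V$; this is handled by passing to closures and shrinking neighborhoods so that $\overline{W}\subseteq V$, using only that the topology is a (Hausdorff, regular) group topology. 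In the abelian setting that the paper mostly works in, ``normal'' is automatic and the argument streamlines to: $H$ NSS $\Leftrightarrow$ $G$ NSS, via the same closure-and-shrinking step.
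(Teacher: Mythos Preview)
Your proof is correct and follows essentially the same route as the paper's: intersect the NSnS witness with the local-essentiality witness, use local essentiality to force $H\cap N\neq\{0\}$ for any small closed normal $N$, and then contradict NSnS of $H$. The paper phrases the main direction directly (exhibiting $W\cap V$ as a NSnS witness for $G$) rather than by contradiction, and cites a lemma from \cite{LocMin} for the easy direction instead of arguing via closures, but the substance is the same.
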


\begin{proof} Any dense (not necessarily locally essential) subgroup of a NSnS group is NSnS (this is Lemma 2.12(c) in  \cite{LocMin}).
Suppose now that $H$ is NSnS. Assume that

(i) the zero neighborhood $W$ in $G$ is such that $W\cap H$ witnesses NSnS of $H$; and

(ii) the zero neighborhood $V$ in $G$ witnesses  local essentiality of $H$ in $G$.

We show that $W\cap V$ does not contain any nontrivial closed normal subgroup of $G$. Indeed, if
$N$ is a closed normal subgroup of $G$ and $N\subseteq W\cap V,$ in particular $N\cap H$
is a closed normal subgroup of $H$ contained in $W\cap H$, hence $N\cap
H=\{0\};$ since $N\subseteq V,$ by local essentiality we deduce $N=\{0\}.$
\end{proof}

\begin{remark}
 Locally compact abelian groups (LCA groups for short) have two useful properties: they are locally minimal and their structure is well known.
As to the second point, a  group is LCA iff it is topologically isomorphic to $\R^n\times H$ where $n\in\N_0$ and $H$ has a compact open
subgroup $K$.

An abelian group $G$ is dense in a LCA group iff it is locally precompact. Hence the local minimality criterion implies:
A locally precompact group $G$ is locally minimal iff it is locally essential in its completion $\wt{G}$; further, if
$\wt{G}=\R^n\times H$ with $n\in\N_0$ and $H$ having a compact open subgroup $K$, then $G$ is locally minimal if $G\cap (\{0\}^n\times K)$
is locally essential in $\{0\}^n\times K$, because the neighborhood $V$ which witnesses local essentiality may be chosen to be contained in
$[-1,1]^n\times K$. Any subgroup contained in $V$ is already contained in $\{0\}\times K$.
So the problem of local minimality of locally precompact groups is reduced to the question of precompact groups, which we will study in the following subsection.
\end{remark}

\subsection{First applications of the criterion of local minimality to dense subgroups of compact abelian groups}


\begin{lemma}\label{sandwich} Let $K$ be a compact abelian group, let $G$ be a subgroup of $K$ and let $N$ be a closed subgroup of $K$.

If $K/N$ is NSS then $G$ is locally essential in $K$ iff $G\cap N$ is locally essential in $N$.

\end{lemma}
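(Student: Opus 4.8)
The plan is to prove both implications directly from the definitions, using Proposition \ref{loc_ess} for one direction and a quotient argument for the other. First I would establish the forward implication: assume $G$ is locally essential in $K$ with respect to some neighborhood $U$ of $0$. Since $N$ is a closed subgroup of $K$, Proposition \ref{loc_ess}(a) immediately gives that $G\cap N$ is locally essential in $N$ with respect to $N\cap U$. So this direction costs nothing beyond citing the already-proved proposition; I would dispose of it in one line.

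The substantive direction is the converse: assume $G\cap N$ is locally essential in $N$ with respect to a neighborhood $W_0$ of $0$ in $N$, and assume $K/N$ is NSS; I must produce a neighborhood of $0$ in $K$ witnessing local essentiality of $G$ in $K$. Let $q\colon K\to K/N$ be the quotient map. Using that $K/N$ is NSS, fix a neighborhood $V$ of $0$ in $K/N$ containing no nontrivial subgroup of $K/N$. Write $W_0 = W\cap N$ for some neighborhood $W$ of $0$ in $K$, and set $U := W \cap q^{-1}(V)$; this is the candidate witness. Now take a closed subgroup $M$ of $K$ with $M\subseteq U$ and $M\cap G = \{0\}$; I need $M=\{0\}$. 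The key observation is that $q(M)$ is a subgroup of $K/N$ contained in $V$ (since $M\subseteq q^{-1}(V)$), hence $q(M)=\{0\}$ by the NSS choice of $V$; that is, $M\subseteq N$. But then $M$ is a closed subgroup of $N$ with $M\subseteq W\cap N = W_0$ and $M\cap(G\cap N) = M\cap G = \{0\}$, so local essentiality of $G\cap N$ in $N$ forces $M=\{0\}$. One small point to check is that $q(M)$ is genuinely a subgroup — this is automatic since $M$ is a subgroup and $q$ is a homomorphism — and that $M$ being closed in $K$ and contained in $N$ makes it closed in $N$, which is clear.

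The main obstacle, such as it is, lies in correctly exploiting the NSS hypothesis on $K/N$: one wants to ``push down'' an arbitrary closed subgroup $M$ of $K$ into $N$, and the NSS condition on the quotient is exactly what forces $q(M)$ to collapse once $M$ is taken small enough. The care needed is in choosing $U$ so that it is simultaneously small enough in the ``$N$-direction'' (contained in $W$, so that $M\cap N$ lands in $W_0$) and small enough in the ``quotient direction'' (contained in $q^{-1}(V)$, so that $q(M)$ is killed). Since $K$ is a topological group, the intersection $W\cap q^{-1}(V)$ is again a neighborhood of $0$, so no compactness of $K$ is actually needed here — compactness of $K$ is a feature of the intended applications rather than of this lemma. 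I would also remark, for the reader, that the hypothesis ``$K/N$ is NSS'' cannot simply be dropped, since without it a small closed subgroup $M$ of $K$ with $M\cap G=\{0\}$ need not be contained in $N$.
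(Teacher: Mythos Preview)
Your proof is correct and essentially identical to the paper's: the forward direction is dispatched via Proposition \ref{loc_ess}(a), and the converse via the observation that the NSS hypothesis on $K/N$ forces any sufficiently small closed subgroup of $K$ to lie inside $N$, after which local essentiality of $G\cap N$ in $N$ finishes the argument. The only cosmetic difference is that the paper picks a single neighborhood of $0$ in $K$ serving both purposes at once rather than constructing it explicitly as the intersection $W\cap q^{-1}(V)$; your side remark that compactness of $K$ is not actually used is also correct.
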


\begin{proof}
According to item (a) of Proposition \ref{loc_ess}, we have to see only that if $G\cap N$  is locally essential in $N$  then $G$ is locally essential in $K$.
By assumption, there exists a neighborhood $V$ of zero in $K$ such that $V\cap N$ witnesses local essentiality of $G\cap N$ in $N$ and
such that $q(V)\subseteq K/N$ contains only the trivial subgroup, where $q:K\to K/N$ denotes
the canonical projection.
Let $L$ be a closed subgroup of $K$ contained in $V$ with $L\cap G=\{0\}$. Then $q(L)$ is a subgroup of
$K/N$ contained in $q(V)$. By the choice of $V$ this yields $q(L)=\{0\}$. Hence $L\subseteq \ker q =N$. Thus $L$ is a closed
subgroup of $N$ contained in $V\cap N$ with $L\cap G=\{0\}$. By our hypothesis this yields $L=\{0\}$.
\end{proof}


Our next aim is to study finite dimensional compact abelian groups.


\begin{remark}\label{RemarkAn}

If $K$ is a compact abelian group, the mappings
$$
N\mapsto N^\bot=\{\chi\in K^{\wedge}: \chi\restriction _N = 0\}\;\;\mbox{ and }\;\;H\mapsto H^\bot=\{x\in K: (\forall \chi \in H)\; \chi(x) =0\},
$$
define lattice antiisomorphisms (i.e., monotonely decreasing bijections) between the lattice of all closed subgroups $N$ of $K$ and the lattice of all subgroups $H$ of the discrete group $K^{\wedge}$, and they are inverse of one another.

In particular, $(N_1\cap N_2)^\bot =N_1^\bot + N_2^\bot$,
 for any closed subgroups $N_1,\; N_2$ of $K$. The subgroups $N^\bot$ and  $H^\bot$ are called annihilators of $N$ and $H$ respectively.

Recall further, that a compact abelian group is totally disconnected iff its character group is a
torsion group, and it is connected iff its character group is a torsion-free group.

If $N$ and $H$ are closed subgroups of $ K$ and $K^{\wedge}$ respectively, we have  $K/N\cong {(N^\bot)}^{\wedge}$
 and $H^\bot \cong (K^{\wedge}/H)^{\wedge} .$

\end{remark}
\begin{remark}\label{N_K}
Let $K$ be a finite-dimensional compact abelian group with $d = \dim K$. Then its Pontryagin dual $K^{\wedge}$
has free rank $d$ (see \cite[8.26]{HM}), so there exists a subgroup $H\cong \Z^d$ of  $K^{\wedge}$,
such that  $K^{\wedge}/H$ is torsion. The preceding remark implies that
 $H^{\bot}\cong ({K^{\wedge}/H})^{\wedge}$ is totally disconnected.

Let ${\mathcal N}_K$ denote the family of all closed subgroups $N$ of $K$ such that $K/N$ is isomorphic to
$\T^d$ (they  are totally disconnected as $\dim N = \dim K - \dim K/N =0$). In other words,
${\mathcal N}_K$ is the set of annihilators of subgroups $H$ of $K^{\wedge}$ which are torsion-free of rank $d$
(so that $K^{\wedge}/H$ is torsion in view of $r(H)=d=r(K^{\wedge})$).

For  $N,N'\in {\mathcal N}_K$,   also $N+N'\in {\mathcal N}_K$ and both $N$ and $N'$ have finite index in
$N+N'$. Indeed, if $H$ and $H'$ are the corresponding
annihilators, by Remark \ref{RemarkAn},  $K^{\wedge}/H$ and $K^{\wedge}/H'$ are torsion
  and the canonical mapping
$K^{\wedge}/(H\cap H')\to K^{\wedge}/H\times K^{\wedge}/H'$
is injective. This shows that $K^{\wedge}/(H\cap H')$ is  a torsion group and hence, by the rank-condition,
$H\cap H'\cong \Z^d$.
The character group of $(N+N')/N$ is isomorphic to $H/(H\cap H')$ and hence finite.

Obviously, the family of subgroups $H$  of $K^{\wedge}$ isomorphic to $\Z^d$ has no bottom element
(e.g., for each $H$ with this
property the {\em proper} subgroup $2H$ has the same property). Therefore, ${\mathcal N}_K$ has no top
element with respect to  inclusion.

In this circumstance, $N\cap N'$ need not belong to ${\mathcal N}_K$, but it is very close to, since $K/(N\cap
N')\cong \T^d\times F$, where $F$ is a finite abelian group.
Indeed, $(N\cap N')^\bot=H+H'$ is a finite extension of $H\cong \Z^d$.
When $K$ is also connected, then $K$ is a Lie group (i.e., isomorphic to $\T^d$) precisely when all $N\in
{\mathcal N}_K$ are
finite. In other words, the subgroups from the family $ {\mathcal N}_K$  help us to understand  ``how much"
$K$ differs from a Lie group.
\end{remark}

\begin{proposition}\label{corol0}
 Let $K$ be a finite-dimensional compact abelian group containing a dense subgroup $G$. Then TFAE:
\begin{itemize}
 \item[(a)] $G$ is locally minimal;
 \item[(b)] $G\cap N$ is locally essential in $N$ for some $N\in  {\mathcal N}_K$;
 \item[(c)]  $G\cap N$ is locally essential in $N$ for all $N\in  {\mathcal N}_K$.
\end{itemize}
\end{proposition}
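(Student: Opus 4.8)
The plan is to prove the equivalences via the local minimality criterion (Theorem \ref{crit}) together with the ``sandwich'' Lemma \ref{sandwich}, reducing everything to statements about the subgroups $N \in \mathcal{N}_K$. Since $G$ is dense in the compact (hence minimal) group $K$, Theorem \ref{crit} (or Corollary \ref{Crit2}(b)) tells us that (a) is equivalent to $G$ being locally essential in $K$; so it suffices to show that $G$ is locally essential in $K$ iff $G \cap N$ is locally essential in $N$ for some (equivalently, all) $N \in \mathcal{N}_K$. The implication (c) $\Rightarrow$ (b) is trivial (the family $\mathcal{N}_K$ is nonempty by Remark \ref{N_K}), and (b) $\Rightarrow$ (a) is exactly the nontrivial direction of Lemma \ref{sandwich}: pick $N \in \mathcal{N}_K$ with $G \cap N$ locally essential in $N$; since $K/N \cong \mathbb{T}^d$ is a Lie group and in particular NSS, Lemma \ref{sandwich} gives that $G$ is locally essential in $K$, hence (a) holds.

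The remaining and main step is (a) $\Rightarrow$ (c). Assume $G$ is locally essential in $K$, and fix an arbitrary $N \in \mathcal{N}_K$. By Proposition \ref{loc_ess}(a), $G \cap N$ is automatically locally essential in $N$ (with respect to $U \cap N$, where $U$ witnesses local essentiality of $G$ in $K$). So in fact (a) $\Rightarrow$ (c) is also a direct consequence of Proposition \ref{loc_ess}(a), requiring nothing about the special structure of $N$ — local essentiality passes down to every closed subgroup. This gives the full cycle (a) $\Rightarrow$ (c) $\Rightarrow$ (b) $\Rightarrow$ (a).

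The one point that deserves care — and is, I expect, the genuine content here — is making sure Lemma \ref{sandwich} genuinely applies in the step (b) $\Rightarrow$ (a), i.e.\ checking that the hypothesis ``$K/N$ is NSS'' is met: this is where the definition of $\mathcal{N}_K$ is used, since by construction $K/N \cong \mathbb{T}^d$, which is a compact Lie group and therefore NSS. One should also note at the outset that $\mathcal{N}_K \neq \emptyset$, which is guaranteed by Remark \ref{N_K} (the annihilator of a free subgroup $H \cong \mathbb{Z}^d$ of $K^\wedge$ with $K^\wedge/H$ torsion does the job, using $\dim K = r_0(K^\wedge) = d$). No delicate estimates are needed; the proof is essentially a bookkeeping combination of Theorem \ref{crit}, Proposition \ref{loc_ess}(a), and Lemma \ref{sandwich}, and the only ``obstacle'' is to state the NSS verification cleanly and to observe that the finite-dimensionality hypothesis on $K$ enters only through the existence and structure of the family $\mathcal{N}_K$.
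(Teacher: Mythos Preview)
Your proposal is correct and follows essentially the same route as the paper: the cycle (a) $\Rightarrow$ (c) via Proposition \ref{loc_ess}(a) and the local minimality criterion, (c) $\Rightarrow$ (b) trivially, and (b) $\Rightarrow$ (a) via Lemma \ref{sandwich} using that $K/N\cong\T^d$ is NSS. Your additional remarks (that $\mathcal{N}_K\neq\emptyset$, and that finite-dimensionality enters only through the structure of $\mathcal{N}_K$) are accurate and simply make explicit what the paper leaves implicit.
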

\begin{proof}
(a) $\Rightarrow$ (c) is a consequence of Proposition \ref{loc_ess}(a) and the local minimality criterion,
while  (c) $\Rightarrow$ (b) is trivial. The implication
(b) $\Rightarrow$ (a) follows from Lemma \ref{sandwich}, since by the above considerations $K/N\cong \T^d$, and the local minimality criterion.
\end{proof}

\section{Compact abelian groups containing a ``small" dense locally minimal subgroup}

We start with a proposition which comes straightforward  from the results of Section 3.

\begin{proposition}\label{corol} Let $K$ be  a compact abelian group and $G$  a dense subgroup of  $K$.
\begin{itemize}
\item[(a)]
If $(G,\tau)$ is linearly topologized  then,  $G$ is locally minimal if and only if $G$ has an open minimal subgroup.

\item[(b)] If  for some infinite totally disconnected closed subgroup $N$  of $K$ one has $G\cap N=\{0\}$, then $G$ is not locally minimal.
\end{itemize}
    \end{proposition}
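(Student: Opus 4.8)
The plan is to deduce both statements from the local minimality criterion (Theorem \ref{crit}) together with its corollaries. Since $G$ is dense in the compact (hence locally minimal) group $K$, Theorem \ref{crit} tells us that $G$ is locally minimal if and only if $G$ is locally essential in $K$. Thus the whole argument reduces to analyzing local essentiality of $G$ in $K$, and in part (a) to recognizing essentiality of an appropriate open subgroup.

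For part (a): assume $(G,\tau)$ is linearly topologized. First I would pass to the completion: since $G$ is dense in $K$ and linearly topologized, $\widetilde G = K$ inherits a neighborhood basis at $0$ of open subgroups coming from the closures of the open subgroups of $G$; in particular $K$ itself is linearly topologized. Now apply Corollary \ref{Crit2}(b): a dense subgroup of the locally minimal group $K$ is locally minimal iff it is locally essential in $K$. By Proposition \ref{loc_ess}(c) applied with $N=K$ (which is legitimate since $K$ is closed in itself and linearly topologized), local essentiality of $G$ in $K$ yields an open subgroup $V$ of $K$ with $G\cap V$ essential in $V$; then $G\cap V$ is an open subgroup of $G$, and by the minimality criterion (Theorem \ref{Min_Crit}) $G\cap V$ is minimal, being a dense essential subgroup of the compact (hence minimal) group $V$. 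Conversely, if $G$ has an open minimal subgroup $W$, pick an open subgroup $V$ of $K$ with $V\cap G = W$ (using density of $G$, $\overline W^K$ is an open subgroup of $K$ meeting $G$ exactly in $W$); by Theorem \ref{Min_Crit} again, $W$ minimal and dense in the compact group $\overline W^K = V$ forces $W$ to be essential in $V$, whence by Proposition \ref{loc_ess}(b) $G$ is locally essential in $K$ with respect to $V$, so $G$ is locally minimal by Theorem \ref{crit}. The main point to be careful about is the correspondence between open subgroups of $G$ and of $K$ under density, and the fact that open subgroups of compact groups are themselves compact, hence minimal.

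For part (b): this is essentially an instance of Corollary \ref{Crit2}(c). Suppose $N$ is an infinite totally disconnected closed subgroup of $K$ with $G\cap N=\{0\}$. An infinite totally disconnected compact abelian group is linearly topologized (it has a basis at $0$ of open subgroups, equivalently its dual is torsion — see Remark \ref{RemarkAn}), and it is non-discrete since it is infinite and compact. So $N$ is a closed, linearly topologized, non-discrete subgroup of $K$ with $G\cap N=\{0\}$, and Corollary \ref{Crit2}(c) immediately yields that $G$ is not locally minimal. If one prefers to argue directly: were $G$ locally minimal, Theorem \ref{crit} would give an open $U\ni 0$ in $K$ witnessing local essentiality; Proposition \ref{loc_ess}(c) would then produce an open subgroup $V$ of $N$ with $G\cap V$ essential in $V$; but $G\cap V \subseteq G\cap N = \{0\}$ forces $V=\{0\}$, contradicting the fact that $N$, being infinite and compact, has no open trivial subgroup.

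The only real obstacle is bookkeeping rather than mathematics: making sure the neighborhood witnessing local essentiality can be taken to be an open subgroup in the linear case, which is exactly what Proposition \ref{loc_ess}(c) delivers, and verifying that infinite totally disconnected compact abelian groups are non-discrete and linearly topologized, which follows from compactness together with the structure recalled in Remark \ref{RemarkAn}.
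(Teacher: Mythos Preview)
Your proposal is correct and follows essentially the same route as the paper: for (a) you pass to the linearly topologized completion $K$, use Theorem~\ref{crit} and Proposition~\ref{loc_ess}(c) to extract an open subgroup $V\le K$ with $G\cap V$ essential in $V$, and conclude by the minimality criterion; for (b) you invoke Corollary~\ref{Crit2}(c) after observing that an infinite compact totally disconnected abelian group is non-discrete and linearly topologized. The paper does exactly this, only more tersely --- in particular it dismisses the converse of (a) as ``immediate'' (an open minimal subgroup is open and locally minimal, hence witnesses local minimality of $G$ directly), whereas you re-enter through local essentiality, which is fine but a slight detour.
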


\begin{proof} (a) Note first that $K$ is also linearly topologized. Let $V$ be an open subgroup of $K$ witnessing local essentiality of $G$. Then
$G_1=G\cap V$ is an open subgroup of $G$ that is dense and essential in the compact group $V$. Hence $G_1$ is minimal. The converse is immediate.

(b) Note that $N$ is compact and infinite so  it cannot be discrete.
Since every compact totally disconnected group is linearly
topologized, we can apply Corollary \ref{Crit2}(c).
\end{proof}

\begin{corollary}\label{newtor}
Let $K$ be a compact abelian group and let $G$ be a subgroup of $K$. If  $K$ is torsion, then $G$ is locally minimal iff $G$ has an open subgroup that is minimal.
\end{corollary}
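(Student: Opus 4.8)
My plan is to reduce the statement to Proposition~\ref{corol}(a), after checking that a torsion compact abelian group is linearly topologized and after replacing $K$ by the closure of $G$ in $K$.

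First I would show that a torsion compact abelian group $K$ is bounded. Writing $K=\bigcup_{n\in\N}K[n]$ as a countable union of closed subgroups and applying the Baire category theorem, some $K[n]$ has non-empty interior, hence is an open subgroup of finite index $m=[K:K[n]]$; then $nmK=\{0\}$. A bounded compact abelian group is totally disconnected (its identity component is a connected, hence divisible, compact abelian group which is at the same time bounded, so it is trivial), and therefore linearly topologized --- this is the same fact already invoked in the proof of Proposition~\ref{corol}(b).

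Next, let $\ol{G}$ be the closure of $G$ in $K$. Then $\ol{G}$ is a closed, hence compact, subgroup of $K$, and $G$ is a dense subgroup of $\ol{G}$. Being a subgroup of the linearly topologized group $K$, the group $\ol{G}$ is linearly topologized, and so is its subgroup $G$ with the induced topology (intersect a neighbourhood basis at $0$ consisting of open subgroups of $\ol{G}$ with $G$). Applying Proposition~\ref{corol}(a) to the compact abelian group $\ol{G}$ and its dense, linearly topologized subgroup $G$ yields exactly that $G$ is locally minimal if and only if $G$ has an open minimal subgroup. The whole substance of the corollary is already contained in Proposition~\ref{corol}(a); the only point that requires an argument --- and it is routine --- is the structural fact that torsion compact abelian groups are linearly topologized, so I do not anticipate any genuine obstacle.
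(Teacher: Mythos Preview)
Your proof is correct and follows essentially the same route as the paper's: reduce to the case where $G$ is dense in its compact closure, observe that this compact torsion group is linearly topologized, and apply Proposition~\ref{corol}(a). The only difference is that the paper cites \cite[Theorem (25.9)]{HR} for the linear-topology fact, whereas you supply a self-contained Baire-category argument (torsion $\Rightarrow$ bounded $\Rightarrow$ totally disconnected $\Rightarrow$ profinite); this is a cosmetic rather than a structural difference.
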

\begin{proof} Since any subgroup of a torsion group is torsion too, we may suppose without loss of generality that $G$ is dense in $K$.  It is  a consequence of \cite[Theorem (25.9)]{HR} that a compact torsion group has a linear  topology.
Now Proposition \ref{corol}(a) applies.
\end{proof}

\begin{remark}
Note that item (a) of Proposition \ref{corol} remains true without the assumption of precompactness; this follows easily from Lemma 2.3 in \cite{DM} (which we reproduce below as Proposition \ref{minsubgr}(a)). 

Let us note that compactness cannot be replaced by local compactness
in Proposition \ref{corol}(b). Indeed, $G=\Q$ is a dense locally minimal
subgroup of $K=\R$ (Corollary \ref{Crit3}) such  that $G\cap N=\{0\}$ for the closed and totally disconnected (actually,
discrete) subgroup $N=\langle \sqrt{2}\rangle$ of $K$.

\end{remark}

The following lemma, exhibiting the full power of Theorem \ref{crit}, will be used in all proofs of this subsection.

\begin{lemma}\label{corol1} Let $p$ be a prime number, let $K$ be a compact
abelian group and let $G$ be a dense locally minimal subgroup of $K$.
\begin{itemize}
\item[(a)]  If $K$ contains a (necessarily closed) subgroup  $N \cong \Z_p$, then $G\cap N \ne \{0\}$,
so $t(G)\subsetneq G$.
\item[(b)] If $K$ contains a (necessarily closed) subgroup isomorphic to $\Z_p^2$, then
$r_0(G)\geq \cont$.
\item[(c)] If $K$ contains a (necessarily closed) subgroup isomorphic to
           $\Z(p)^\omega$, then $|G|\geq \cont$.
\end{itemize}
\end{lemma}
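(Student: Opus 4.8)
The underlying strategy in all three parts is the same: apply the local minimality criterion (Theorem \ref{crit}), which gives that $G$ is locally essential in $K$, witnessed by some closed neighborhood $V$ of $0$ in $K$. Then use Proposition \ref{loc_ess}(a) to transfer local essentiality to the closed subgroup $N$ in question: $G\cap N$ is locally essential in $N$ with respect to $N\cap V$. Since $\Z_p$, $\Z_p^2$ and $\Z(p)^\omega$ are all compact and linearly topologized, Proposition \ref{loc_ess}(c) (or directly Proposition \ref{corol}(a) for $N=\Z(p)^\omega$) further upgrades this to: there is an \emph{open} subgroup $W$ of $N$ with $W\subseteq V$ such that $G\cap W$ is \emph{essential} in $W$. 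Now one only has to exploit the very rigid subgroup lattices of these three groups.

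For (a): the open subgroups of $\Z_p$ are exactly the $p^k\Z_p$, all isomorphic to $\Z_p$ itself, and every nonzero closed subgroup of $\Z_p$ is of this form, hence meets $G\cap W$ nontrivially by essentiality; in particular $W\cap G\neq\{0\}$. A nonzero element of $\Z_p\cap G\subseteq\Z_p$ has infinite order (as $\Z_p$ is torsion-free), so $G$ contains an element of infinite order, i.e.\ $t(G)\subsetneq G$. For (c): the open subgroups of $N=\Z(p)^\omega$ are the ``cofinite-support'' subgroups $\prod_{i\in F^c}\Z(p)$, each again isomorphic to $\Z(p)^\omega$, and they have cardinality $\cont$; the minimal nonzero closed subgroups are the copies of $\Z(p)$ sitting in individual coordinates. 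Since $G\cap W$ is essential in $W\cong\Z(p)^\omega$ and $W$ has $\cont$ many pairwise almost-disjoint copies of $\Z(p)$ (one per coordinate), $G\cap W$ must meet each of them, producing $\cont$ independent elements of order $p$ in $G$; hence $|G|\geq\cont$.

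Part (b) is the one requiring real work. Here $N\cong\Z_p^2$ and we have an open $W\leq N$, itself isomorphic to $\Z_p^2$, with $G\cap W$ essential in $W$. The closed subgroups of $\Z_p^2$ of ``rank $1$'' are parametrized by $\Prm^1(\Q_p)$, a projective line over $\Q_p$, which has $\cont$ many points; explicitly, for each ``slope'' $\lambda$ in $\Q_p\cup\{\infty\}$ one gets a closed subgroup $L_\lambda\cong\Z_p$, and distinct slopes give $L_\lambda\cap L_\mu=\{0\}$ (or finite — one can pass to slightly smaller copies to make the intersection trivial, or just note the intersection is a proper closed subgroup of each, hence finite). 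By essentiality $G\cap W$ meets each $L_\lambda$ in a nonzero subgroup, hence (being inside the torsion-free $L_\lambda$) in an element $g_\lambda$ of infinite order. The key point is then to extract $\cont$ many \emph{rationally independent} elements of $G$: since the $g_\lambda$ lie in pairwise (essentially) disjoint copies of $\Z_p$, any finite $\Z$-linear dependence $\sum n_i g_{\lambda_i}=0$ would force each $n_i g_{\lambda_i}$ to lie in the intersection of $L_{\lambda_i}$ with the subgroup generated by the others, which is contained in a proper (finite) subgroup, so $n_ig_{\lambda_i}=0$, whence $n_i=0$. Thus $\{g_\lambda\}$ is an independent family of size $\cont$ and $r_0(G)\geq\cont$.

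The main obstacle is the bookkeeping in part (b): making precise the parametrization of the rank-one closed subgroups of $\Z_p^2$ and, more importantly, arranging that the chosen witnesses $g_\lambda$ are genuinely $\Z$-independent rather than merely pairwise independent. The clean way is to observe that for a finite set $\{\lambda_1,\dots,\lambda_k\}$ of distinct slopes, the sum $L_{\lambda_2}+\cdots+L_{\lambda_k}$ is again a closed subgroup of $\Z_p^2$ which, if it is all of $\Z_p^2$ or of finite index, only helps (it shows $L_{\lambda_1}$ meets it in a finite group), so $L_{\lambda_1}\cap(L_{\lambda_2}+\cdots+L_{\lambda_k})$ is a proper closed — hence finite, since $L_{\lambda_1}\cong\Z_p$ — subgroup; any dependence relation places a nonzero multiple of $g_{\lambda_1}$ there, contradicting that $g_{\lambda_1}$ has infinite order. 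One might alternatively package all of (a)–(c) by choosing $W$ at once small enough to sit inside the chosen copy of $\Z_p$, $\Z_p^2$, or $\Z(p)^\omega$ and invoke the structure of closed subgroups uniformly; I would present them separately for clarity.
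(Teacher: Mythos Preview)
Your overall strategy (local minimality criterion $\Rightarrow$ local essentiality, then pass to an open subgroup of the totally disconnected $N$ via Proposition~\ref{loc_ess}(c) and test essentiality against explicit closed subgroups) is exactly the paper's, and part (a) is fine. Parts (b) and (c), however, each contain a genuine gap.

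\textbf{Part (c).} You write that $W\cong\Z(p)^\omega$ ``has $\cont$ many pairwise almost-disjoint copies of $\Z(p)$ (one per coordinate)''. But there are only $\omega$ coordinates, so ``one per coordinate'' gives only countably many test subgroups and the count fails. You could repair this by using \emph{all} one-dimensional $\Z(p)$-subspaces (there are $\cont$ of those), but the paper's argument is sharper: in a group of exponent $p$, essentiality of $G\cap W$ in $W$ forces $G\cap W=W$, since for any $x\in W\setminus(G\cap W)$ the cyclic group $\langle x\rangle\cong\Z(p)$ is closed and meets $G\cap W$ trivially. Hence $|G|\ge|W|=\cont$ immediately.

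\textbf{Part (b).} Your independence argument breaks down at the sentence ``$L_{\lambda_1}\cap(L_{\lambda_2}+\cdots+L_{\lambda_k})$ is a proper closed --- hence finite, since $L_{\lambda_1}\cong\Z_p$ --- subgroup''. Neither claim holds: first, for $k\ge 3$ the sum $L_{\lambda_2}+\cdots+L_{\lambda_k}$ typically has finite index (or is all of $\Z_p^2$), so the intersection with $L_{\lambda_1}$ is \emph{not} proper; second, even when it is proper, the nonzero proper closed subgroups of $\Z_p$ are the groups $p^k\Z_p$, all infinite --- $\Z_p$ is torsion-free, so its only finite subgroup is $\{0\}$. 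Thus a nonzero multiple of $g_{\lambda_1}$ landing in that intersection is no contradiction at all. The paper avoids this trap entirely: using only that the $N_i=\{(\lambda,\xi_i\lambda):\lambda\in\Z_p\}$ (for $\xi_i\in\Z_p\setminus\{0\}$) satisfy $N_i\cap N_j=\{0\}$ for $i\ne j$, one picks $0\ne g_i\in G\cap N_i$ by part (a) and observes that the $g_i$ are pairwise \emph{distinct}. This already gives $|G\cap N|\ge\cont$; since $G\cap N\subseteq\Z_p^2$ is torsion-free, $r_0(G)\ge r_0(G\cap N)=|G\cap N|\ge\cont$. No independence of the $g_i$ is needed.
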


\begin{proof}
According to Theorem \ref{crit}, $G$  is a locally essential subgroup of $K$ with respect to some \nbd\ $U$ of 0.

(a)  This follows directly from Proposition \ref{corol}(b).

(b) Let  $N$ be a closed subgroup of $K$ isomorphic to $\Z_p^2$. Since $|\Z_p|=\cont$, one has an enumeration $\{\xi_i: i\in I\} $ of $\Z_p\setminus \{0\}$, with $|I|= \cont$.
Following  the argument from \cite{P1} (see also \cite[Proposition 3.5.11]{DPS}), we introduce the groups $N_i=\{(\lambda, \xi_i\lambda):\lambda \in \Z_p \}$ for each $i\in I$. Note that $N_i \cong \Z_p$ and
$$
N_i\cap N_j=\{0\}\mbox{ for }i,j\in I , i\ne j\eqno(2)
$$
since the ring $\Z_p$ has no zero divisors. By item (a),  for every $i\in I$ there exists $0\ne g_i \in G\cap N_i$.  It follows from (2) that $g_i\ne g_j$
when $i\ne j$. Hence the subset $X=\{g_i: i\in I\}$ of $G$ has size $\cont$. So, it generates a
torsion-free subgroup $H$ of $G$ of size $\cont$. Therefore, $r_0(G)\geq r_0(H) =|H|=\cont$.

(c) Let  $N$ be a closed subgroup of $K$ isomorphic to $\Z(p)^\omega$. Applying Proposition \ref{loc_ess}(c), we conclude that there exists an open subgroup $V$ of $N$ such that $G'=G\cap V$ is an essential subgroup of $V$. Since $V$ must be isomorphic to $N$, from the fact that $G'=G\cap V$ is essential in  $V$ we can conclude that $G'= V$. Since the latter group has size $\cont$ we are done.
\end{proof}

\begin{corollary}\label{corol_1} Let $p$ be a prime number. If $K$ is a compact abelian group containing a (necessarily closed) subgroup isomorphic to $\Z_p^2$ or to
$\Z(p)^\omega$, then every dense
locally minimal subgroup of
$K$ has size $\geq \cont$.  \end{corollary}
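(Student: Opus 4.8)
The statement to prove is Corollary \ref{corol_1}: if $K$ is a compact abelian group containing a closed subgroup isomorphic to $\Z_p^2$ or to $\Z(p)^\omega$, then every dense locally minimal subgroup of $K$ has size $\geq\cont$. My plan is to deduce this almost immediately from Lemma \ref{corol1}, which has just been proved and provides exactly the two cases we need. The only gap between that lemma and the corollary is that the lemma speaks about \emph{dense} locally minimal subgroups of $K$, whereas the corollary claims size $\geq\cont$ for an \emph{arbitrary} (not necessarily dense) dense locally minimal subgroup of $K$ — wait, re-reading, the corollary also says ``dense locally minimal subgroup'', so in fact there is no gap at all and the deduction is direct. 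So the first and essentially only step is: let $G$ be a dense locally minimal subgroup of $K$; by hypothesis $K$ contains a closed copy of $\Z_p^2$ or of $\Z(p)^\omega$; apply part (b) of Lemma \ref{corol1} in the first case to get $r_0(G)\geq\cont$, hence $|G|\geq r_0(G)\geq\cont$; apply part (c) in the second case to get directly $|G|\geq\cont$. In either case $|G|\geq\cont$, which is what we wanted.

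There is really no obstacle here: the corollary is a packaging of Lemma \ref{corol1}(b)--(c), isolating the cardinality conclusion and discarding the finer structural information ($t(G)\subsetneq G$, torsion-freeness of the witnessing subgroup $H$, etc.). If one wanted a self-contained write-up that does not merely cite the lemma, one would recall the core mechanism: in the $\Z_p^2$ case, index the nonzero elements $\xi_i$, $i\in I$, of $\Z_p$ (with $|I|=\cont$), form the $\cont$-many distinct ``lines'' $N_i=\{(\lambda,\xi_i\lambda):\lambda\in\Z_p\}\cong\Z_p$, which pairwise intersect trivially because $\Z_p$ is a domain, use Lemma \ref{corol1}(a) (which rests on Proposition \ref{corol}(b) via the local minimality criterion) to extract a nonzero $g_i\in G\cap N_i$ for each $i$, observe these are pairwise distinct, and conclude $|G|\geq\cont$; in the $\Z(p)^\omega$ case, apply Proposition \ref{loc_ess}(c) to the locally essential subgroup $G\leq K$ and the closed linearly topologized subgroup $N\cong\Z(p)^\omega$ to find an open $V\leq N$ with $G\cap V$ essential in $V\cong\Z(p)^\omega$, force $G\cap V=V$ since an essential subgroup of $\Z(p)^\omega$ of the same Prüfer $p$-rank must be everything, and note $|V|=\cont$.

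Thus the proof is a one-paragraph corollary: apply Lemma \ref{corol1}(b) or (c) according to which of $\Z_p^2$, $\Z(p)^\omega$ embeds as a closed subgroup of $K$, and in both cases read off $|G|\geq\cont$. The ``main obstacle'', insofar as there is one, was already dispatched in the proof of Lemma \ref{corol1}: establishing part (a), i.e. that a dense locally minimal $G$ must meet every closed copy of $\Z_p$ nontrivially, which is where the local minimality criterion (Theorem \ref{crit}) and the fact that $\Z_p$ is compact, infinite, linearly topologized and hence non-discrete do the real work. Given Lemma \ref{corol1}, nothing remains to be done beyond the case split and the trivial inequality $|G|\geq r_0(G)$.
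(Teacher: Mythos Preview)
Your proposal is correct and matches the paper's approach exactly: the paper states Corollary~\ref{corol_1} without proof, treating it as an immediate consequence of Lemma~\ref{corol1}(b)--(c), and your deduction via the case split together with $|G|\geq r_0(G)$ is precisely the intended one-line argument.
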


\begin{remark}\label{pmonot} Recall that a compact group $N$ is called {\it $p$-monothetic} if $N$ is topologically isomorphic either to the $p$-adic integers $\Z_p$ or to $\Z(p^n)$ for some $n\ge 1$. In particular, every $p$-monothetic group has a closed subgroup which is topologically isomorphic to $\Z_p$ or $\Z(p).$ It is shown in \cite[4.1.7]{DPS} that every nontrivial compact abelian group has a nontrivial $p$-monothetic subgroup for some prime $p$.
 This allows us to take as test subgroups $N$ for essentiality only $p$-monothetic ones, or even only those subgroups topologically isomorphic to $\Z_p$ or $\Z(p)$ for some prime $p$.
\end{remark}
Next we show that under certain
restraints on the algebraic structure of a precompact
locally minimal group it turns out to be minimal. For example, the following corollary implies that if $K$ is a product of copies of ${\mathbb Z}_p$ with not necessary
distinct primes $p$, then a subgroup $G$ of $K$ is  locally minimal iff it is minimal.
\begin{corollary}\label{tor-free/tor}
Let $K$ be a compact abelian group and let $G$ be a subgroup of $K$.
If $K$ is torsion-free, then $G$ is locally minimal iff it is minimal.
\end{corollary}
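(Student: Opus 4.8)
The plan is to reduce to the local minimality criterion (Theorem \ref{crit}) and show that, when $K$ is torsion-free, being locally essential forces being essential, whence local minimality forces minimality by Theorem \ref{Min_Crit}. As in Corollary \ref{newtor}, we may assume $G$ is dense in $K$ (a subgroup of a torsion-free group is torsion-free, and we can replace $K$ by $\overline{G}$). The implication "minimal $\Rightarrow$ locally minimal" is immediate since every minimal group is locally minimal. So the content is the converse: assume $G$ is locally minimal and dense in $K$; by Theorem \ref{crit}, $G$ is locally essential in $K$ with respect to some neighborhood $U$ of $0$. We must upgrade this to: $G$ is essential in $K$, i.e. $G\cap N\ne\{0\}$ for every nontrivial closed subgroup $N$ of $K$.

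First I would dispose of the case $N$ finite: since $K$ is torsion-free it has no nontrivial finite subgroups, so every nontrivial closed subgroup $N$ of $K$ is infinite. Next, by Remark \ref{pmonot} it suffices to verify essentiality against $p$-monothetic test subgroups; more precisely, any nontrivial closed subgroup $N$ of $K$ contains a closed subgroup topologically isomorphic to $\Z_p$ or to $\Z(p)$ for some prime $p$. But $K$ is torsion-free, so the copy of $\Z(p)$ is excluded, and therefore $N$ contains a closed subgroup $N_0\cong\Z_p$. Now apply Lemma \ref{corol1}(a) to $N_0$: since $G$ is a dense locally minimal subgroup of $K$ and $K$ contains the closed subgroup $N_0\cong\Z_p$, we get $G\cap N_0\ne\{0\}$, hence $G\cap N\supseteq G\cap N_0\ne\{0\}$. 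This proves $G$ is essential in $K$.

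Finally, assemble: $G$ is dense in $K$, $K$ is (locally) minimal — indeed $K$ is compact, hence minimal — and $G$ is essential in $K$, so by the minimality criterion (Theorem \ref{Min_Crit}) $G$ is minimal.

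The only delicate point is the appeal to Lemma \ref{corol1}(a) together with Remark \ref{pmonot}; everything hinges on the fact that torsion-freeness kills both the finite subgroups and the $\Z(p)$-summands among the $p$-monothetic test subgroups, leaving only the $\Z_p$ case, which is exactly what Lemma \ref{corol1}(a) handles. Once that reduction is in place, no computation remains — it is a direct invocation of Theorems \ref{crit} and \ref{Min_Crit}. I would expect the write-up to be only a few lines.
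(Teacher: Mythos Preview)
Your proposal is correct and follows essentially the same route as the paper: reduce to $G$ dense in $K$, use Remark~\ref{pmonot} to restrict the test subgroups to $p$-monothetic ones, observe that torsion-freeness forces these to be copies of $\Z_p$, and then invoke Lemma~\ref{corol1}(a) together with Theorem~\ref{Min_Crit}. The only cosmetic difference is that your explicit appeal to Theorem~\ref{crit} for local essentiality is redundant, since Lemma~\ref{corol1}(a) already takes ``$G$ dense and locally minimal'' as its hypothesis and does that work internally.
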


\begin{proof} Since any subgroup of a torsion-free group is torsion-free  too, we may suppose without loss of generality that $G$ is dense in $K$.
 Let $G$ be locally minimal. In order to prove that $G$ is minimal,
  it is sufficient to verify that $ G$ is essential in $K$ (Theorem \ref{Min_Crit}). Taking into account Remark \ref{pmonot}, we fix a torsion-free $p$-monothetic group $N$. Obviously,  $N \cong \Z_p$.  Now Lemma \ref{corol1}(a) implies $ \{0\}\not= G \cap N$ which completes the proof.
\end{proof}

%
%
%
%
For precompact abelian groups $G$ of square-free exponent we have the following striking result:

\begin{corollary}\label{tor-free/tor2}
Let $G$ be a precompact abelian group of square-free exponent. The following are equivalent:
\begin{itemize}
 \item[(a)] $G$ is compact;
 \item[(b)] $G$ is minimal;
 \item[(c)] $G$ is locally minimal.
\end{itemize}
\end{corollary}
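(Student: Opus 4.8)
The plan is to prove the cycle $(a)\Rightarrow(b)\Rightarrow(c)\Rightarrow(a)$. The implications $(a)\Rightarrow(b)$ (every compact Hausdorff group is minimal) and $(b)\Rightarrow(c)$ (every minimal group is locally minimal, e.g.\ with respect to $V=G$) are general facts that do not use the square-free hypothesis, so all the content lies in $(c)\Rightarrow(a)$: a locally minimal precompact abelian group $G$ of square-free exponent is compact.

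First I would pass to the completion $K:=\wt G$, which is compact since $G$ is precompact. If $n$ denotes the (square-free) exponent of $G$, then $nG=\{0\}$, and continuity of $x\mapsto nx$ together with density of $G$ force $nK=\{0\}$; hence $K$ too has square-free exponent. In particular $K$ is a bounded, hence torsion, compact abelian group, so it is linearly topologized (by \cite[Theorem (25.9)]{HR}, exactly as in the proof of Corollary~\ref{newtor}). Now I would apply the local minimality criterion (Theorem~\ref{crit}): $G$ is locally essential in $K$, with respect to some neighbourhood $U$ of $0$. Since $K$ has a neighbourhood basis at $0$ consisting of open subgroups, choose an open subgroup $V\le K$ with $V\subseteq U$; it still witnesses local essentiality, so by Proposition~\ref{loc_ess}(b) the subgroup $G\cap V$ is \emph{essential} in $V$. (Alternatively one can invoke Corollary~\ref{newtor} directly: $G$ has an open minimal subgroup $G_1$, whose closure $V=\overline{G_1}^K$ is a compact open subgroup of $K$ in which $G_1$ is dense, hence essential by Theorem~\ref{Min_Crit}.)

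The decisive observation is that square-freeness collapses essentiality: an essential subgroup $H$ of a Hausdorff abelian group $V$ of square-free exponent must be all of $V$. Indeed, if the exponent of $V$ is $p_1\cdots p_k$ with distinct primes $p_i$, the Chinese Remainder Theorem gives the (algebraic) decomposition $V=\bigoplus_{i=1}^k V[p_i]$, each $V[p_i]$ being an elementary abelian $p_i$-group. If $H\ne V$, take $x\in V\setminus H$ and write $x=\sum_i x_i$ with $x_i\in V[p_i]$; since $H$ is a subgroup, some $x_i\notin H$, and such an $x_i$ has order $p_i$, so $N:=\langle x_i\rangle\cong\Z(p_i)$ is a nontrivial finite (hence closed) subgroup of $V$ with $N\cap H$ a proper subgroup of the prime-order group $N$, i.e.\ $N\cap H=\{0\}$ --- contradicting essentiality. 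Applying this with $H=G\cap V$ gives $V\subseteq G$; then $G/V$ is a dense subgroup of $K/V$, which is compact and discrete, hence finite, so $G/V=K/V$ and $G=K$ is compact.

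There is no real technical obstacle here. The two points that require a little care are: the reduction from local essentiality in $K$ to genuine essentiality in an open subgroup of $K$, which is precisely where the linear topology of $K$ (again a consequence of boundedness) is used; and the fact that for testing essentiality it suffices to use finite subgroups of prime order --- but under the square-free hypothesis this is nothing more than the primary decomposition above, hence immediate. The real content is simply the choice of reduction: completion, then the local minimality criterion, then an open subgroup carrying honest essentiality, on which square-freeness does the rest.
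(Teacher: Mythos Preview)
Your proof is correct and essentially the same as the paper's: both reduce to an open subgroup $V$ of the compact completion $K$ in which $G\cap V$ is essential, and then use that a group of square-free exponent equals its socle to conclude $G\cap V=V$ (the paper phrases this as ``$N$ contains all elements of prime order of $N_1$, hence $N_1=Soc(N_1)\subseteq N$'', which is exactly your contrapositive argument with $\langle x_i\rangle$). The only cosmetic difference is that the paper reaches $V$ via Corollary~\ref{newtor} (your ``alternative'' route), whereas your primary route goes directly through Proposition~\ref{loc_ess}(b).
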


\begin{proof}
The implications (a) $\Rightarrow$ (b) $\Rightarrow$ (c) are obvious.

Let $K$ be the compact completion of $G$. Then $K$ has the same square-free exponent as $G$.
Since $G$ is dense in $K$, to prove that $G=K$ is compact it suffices to show that  $G$ is open (and consequently closed) in $K$.
It is a consequence of Corollary \ref{newtor} that  $G$ has an open, minimal subgroup $N$. The closure $N_1$ of $N$ in $K$ is
a compact group of square-free exponent, i.e., $Soc(N_1) = N_1$.
Since $N$ is essential in $N_1$, $N$ contains all elements of prime order of $N_1$. Consequently,
also $N_1= Soc(N_1)\subseteq N$. Hence $N$
is compact in $K$. Therefore $N$ is closed also in $K$. So it is open in $K$, as it is open in $G$.
\end{proof}

 We apply the criterion of local minimality in the following three theorems. The first two of them characterize the compact abelian groups containing a dense locally minimal subgroup that is ``small"  from some appropriate point of view. As a measure of ``smallness" we take either the size $|G|$ of the group, or its free-rank $r_0(G)$ (e.g., torsion groups $G$ are ``small" in the latter sense as $r_0(G)=0$ for a torsion abelian group). The third one characterizes the compact abelian groups whose torsion subgroup is dense and locally minimal. In these theorems the equivalences among the items (b)-(c)-(d) are the main theorems of \cite{P2}, \cite{D_Eger} and  \cite{DP}, respectively.


Let us recall here that every totally disconnected compact group $N$ can be written as a topological direct product $N=\prod_p N_p$ of
its {\em topologically $p$-torsion subgroups} $N_p=\{x\in N : p^nx \to 0\}$ (cf. \cite[Proposition 3.10]{Armacost}).


\begin{theorem}\label{theocorol1} For a compact abelian group  $K$ the following assertions are equivalent:
\begin{itemize}
  \item[(a)] $K$ contains a dense locally minimal subgroup of size $< \cont$;
  \item[(b)] $K$ contains a dense countable minimal subgroup;
  \item[(c)] $K$ contains subgroups isomorphic to $\Z_p^2$ or to $\Z(p)^\omega$ for no prime $p$.
  \item[(d)] $K$ is finite-dimensional and there exists a closed totally disconnected subgroup $N$ of $K$ such that:
\begin{itemize}
\item[(d$_1$)] $K/N\cong \T^d$, where $d=\dim K$;
\item[(d$_2$)] $N=\prod_p (\Z_p^{e_p}\times F_p)$, where $e_p\in \{0,1\}$ and $F_p$ is a finite $p$-group for each prime $p$.
\end{itemize}
\end{itemize}
\end{theorem}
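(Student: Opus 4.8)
The plan is to establish the cycle (a) $\Rightarrow$ (c) $\Rightarrow$ (d) $\Rightarrow$ (b) $\Rightarrow$ (a), using the local minimality criterion (Theorem \ref{crit}) together with the structure results of Section 3 and the known minimality-theoretic counterpart from \cite{P2} for the equivalence (b) $\Leftrightarrow$ (c) $\Leftrightarrow$ (d). The implication (b) $\Rightarrow$ (a) is trivial since a countable minimal group is locally minimal of size $<\cont$; the real content is in the other three arrows.

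\textbf{From (a) to (c).} Suppose $K$ contains a dense locally minimal subgroup $G$ with $|G|<\cont$. By Corollary \ref{corol_1} (equivalently Lemma \ref{corol1}(b),(c)), if $K$ contained a closed copy of $\Z_p^2$ or of $\Z(p)^\omega$ for some prime $p$, then every dense locally minimal subgroup of $K$ would have size $\geq\cont$, contradicting $|G|<\cont$. Hence (c) holds. I expect this to be the easiest step, being essentially a restatement of Corollary \ref{corol_1}.

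\textbf{From (c) to (d).} This is the structural heart of the argument. First, condition (c) forbids $\Z(p)^\omega$ for all $p$, which by a standard compactness/structure argument (each topologically $p$-torsion component $N_p$ of the totally disconnected part must have finite $p$-rank, else it would contain $\Z(p)^\omega$) forces the totally disconnected part of $K$ to have finite rank in each prime; combined with the exclusion of $\Z_p^2$, each $N_p$ embeds into $\Z_p^{e_p}\times(\text{finite }p\text{-group})$ with $e_p\leq 1$. Dually, the absence of $\Z_p^2$ and $\Z(p)^\omega$ means $r_p(K^{\wedge})\le$ something bounded and $K^{\wedge}$ has finite torsion-free rank, i.e.\ $K$ is finite-dimensional; set $d=\dim K$. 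Then pick $N\in\mathcal N_K$ (Remark \ref{N_K}) — a closed totally disconnected subgroup with $K/N\cong\T^d$ — and analyze $N=\prod_p N_p$: the conditions (c), transported to $N$, yield exactly (d$_2$). This is precisely where the combinatorics of the Pontryagin dual (Remark \ref{RemarkAn}, Remark \ref{N_K}) must be used carefully, and I expect \textbf{this to be the main obstacle}: one must show that after modding out the connected part the surviving totally disconnected group has the very rigid product shape in (d$_2$), ruling out infinite products of nontrivial finite $p$-groups and $p$-adic factors of rank $\geq 2$.

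\textbf{From (d) to (b).} Given the explicit description in (d), I would build a dense countable minimal subgroup of $K$ by lifting through the extension $0\to N\to K\to\T^d\to 0$. In $N=\prod_p(\Z_p^{e_p}\times F_p)$ one chooses, for each $p$ with $e_p=1$, a single element generating a dense copy of $\Z_p$ in the $\Z_p$-factor (a topological generator); the $F_p$ are finite, so their product contributes a closed subgroup which one handles via density of a suitably chosen element or is absorbed since only finitely many $F_p$ are needed to be hit for essentiality in any given basic neighborhood — here one invokes Remark \ref{pmonot}, which lets us test essentiality only against subgroups isomorphic to $\Z_p$ or $\Z(p)$. Assembling these choices with a countable dense subgroup of $\T^d$ and pulling back, we obtain a countable subgroup $G\leq K$ that is dense and essential in $K$ (it meets every closed $\Z_p$ and every $\Z(p)$ nontrivially, by construction and by the essentiality description), hence minimal by Theorem \ref{Min_Crit}. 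The delicate point is checking essentiality of the lift, which reduces, via Lemma \ref{sandwich} applied to $N$ with $K/N\cong\T^d$ NSS, to essentiality of $G\cap N$ in $N$, and this in turn follows from the monothetic test criterion.

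Finally, (b) $\Rightarrow$ (a) is immediate: a dense countable minimal subgroup is a dense locally minimal subgroup of size $\aleph_0<\cont$. This closes the cycle.
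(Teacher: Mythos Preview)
Your proposal is correct and follows essentially the same route as the paper: (a) $\Rightarrow$ (c) via Corollary \ref{corol_1}, (b) $\Rightarrow$ (a) trivially, and the equivalence of (b), (c), (d) is Prodanov's result \cite{P2}. The only difference is that the paper simply cites \cite{P2} for (c) $\Rightarrow$ (d) $\Rightarrow$ (b) without reproducing any of the argument, whereas you sketch these steps; your sketches are reasonable in outline, though note that Lemma \ref{sandwich} is stated for \emph{local} essentiality, so for the essentiality reduction in (d) $\Rightarrow$ (b) you would either invoke its proof with $V=K$ or appeal directly to the $p$-monothetic test (Remark \ref{pmonot}) inside $N$.
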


\begin{proof} The implication (a) $\Rightarrow $ (c) follows directly from Corollary \ref{corol_1}, while (b) $\Rightarrow$ (a) is trivial. The equivalence among (b), (c) and (d) was proved in \cite{P2}.
\end{proof}

\begin{theorem}\label{corol2}
For a compact abelian group  $K$ the following assertions are
equivalent:
\begin{itemize}
  \item[(a)] $K$ contains a dense locally minimal subgroup of free rank
             $< \cont$;
  \item[(b)] $K$ contains a dense minimal subgroup of countable free rank;
  \item[(c)] $K$ contains
subgroups  isomorphic to  $\Z_p^2$ for no prime $p$.
  \item[(d)] $K$ is finite-dimensional and there exists a closed totally  disconnected subgroup $N$ of $K$ such that:
\begin{itemize}
\item[(d$_1$)] $K/N\cong \T^d$, where $d=\dim K$;
\item[(d$_2$)] $N=\prod_p N_p$, where $N_p=\Z_p^{e_p}\times B_p$,  $e_p\in \{0,1\}$ and $B_p$ is a compact $p$-group for each prime $p$.
\end{itemize}
\end{itemize}
\end{theorem}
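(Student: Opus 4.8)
The plan is to prove Theorem \ref{corol2} by exactly the three-step template already used for Theorem \ref{theocorol1}, so that the genuinely new content reduces to a single implication while everything else is either trivial or quotable.

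First I would establish the cycle (a) $\Rightarrow$ (c) $\Rightarrow$ (b) $\Rightarrow$ (a). The implication (b) $\Rightarrow$ (a) is immediate: a minimal group is locally minimal, and a group of countable free rank has free rank $\le\aleph_0<\cont$, so a dense minimal subgroup of countable free rank is in particular a dense locally minimal subgroup of free rank $<\cont$. For (a) $\Rightarrow$ (c) I argue by contraposition: if $K$ contained a closed subgroup isomorphic to $\Z_p^2$ for some prime $p$, then by Lemma \ref{corol1}(b) \emph{every} dense locally minimal subgroup $G$ of $K$ would satisfy $r_0(G)\ge\cont$, contradicting (a); hence (c) holds. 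The remaining arrow (c) $\Rightarrow$ (b), together with the equivalence (c) $\Leftrightarrow$ (d), is precisely the main theorem of \cite{D_Eger}, which characterizes the compact abelian groups admitting a dense minimal subgroup of countable free rank; I would simply cite it. Chaining these yields that (a), (b), (c), (d) are all equivalent.

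The only point requiring attention — and the sole structural difference from Theorem \ref{theocorol1} — is that here condition (c) rules out copies of $\Z_p^2$ only, and not of $\Z(p)^\omega$. Correspondingly, the relevant obstruction is the free-rank estimate Lemma \ref{corol1}(b), not the cardinality estimate Corollary \ref{corol_1}: a compact group containing $\Z(p)^\omega$ may still have a dense locally minimal subgroup that is torsion, hence of free rank $0$, so $\Z(p)^\omega$ cannot be excluded under a free-rank smallness hypothesis. Apart from keeping this distinction in mind, there is no real obstacle on our side; the substance of the statement lies in Lemma \ref{corol1}(b) and in the cited result \cite{D_Eger}.
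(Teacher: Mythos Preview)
Your proposal is correct and follows essentially the same approach as the paper: the paper also derives (a) $\Rightarrow$ (c) directly from Lemma \ref{corol1}(b), notes that (b) $\Rightarrow$ (a) is trivial, and cites \cite{D_Eger} for the equivalence of (b), (c), and (d). Your additional paragraph explaining why only $\Z_p^2$ (and not $\Z(p)^\omega$) is excluded here is a correct and useful clarification, though the paper omits it.
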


\begin{proof} The implication (a) $\Rightarrow $ (c)  follows directly from  Lemma \ref{corol1}(b), while (b) $\Rightarrow$ (a) is trivial. The equivalence among (b), (c) and (d) was proved in \cite{D_Eger}.
\end{proof}

The compact abelian groups described in the next theorem
 are called {\em exotic tori} in \cite{DP}. The motivation for this name comes from the fact that every nontrivial closed subgroup of an exotic torus contains nontrivial torsion elements (the usual tori have obviously this property).

\begin{theorem}\label{ExTor} For a compact abelian group  $K$ the following
assertions
are equivalent:
\begin{itemize}
  \item[(a)] the subgroup $t(K)$ of $K$ is dense and locally minimal;
  \item[(b)] the subgroup $t(K)$ of $K$ is dense and minimal;
  \item[(c)] $K$ contains  subgroups  isomorphic to  $\Z_p$ for no  prime $p$.
  \item[(d)] $K$ is finite-dimensional and there exists a closed totally disconnected subgroup $N$ of $K$ such that:
\begin{itemize}
\item[(d$_1$)] $K/N\cong \T^d$, where $d=\dim K$;
\item[(d$_2$)] $N=\prod_p N_p$, where $N_p$ is a compact $p$-group for each prime $p$.
\end{itemize}
\end{itemize}
In case $K$ is a connected compact abelian group satisfying the above equivalent conditions, the subgroups $N_p$  in item (d$_2$) are finite.
\end{theorem}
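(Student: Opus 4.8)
The strategy is to obtain the cycle (a) $\Rightarrow$ (c), then the equivalences (b) $\Leftrightarrow$ (c) $\Leftrightarrow$ (d), then (b) $\Rightarrow$ (a). The middle block---indeed the full equivalence of (b), (c) and (d)---is the main theorem of \cite{DP} and can be quoted verbatim, while (b) $\Rightarrow$ (a) is immediate since every minimal group is locally minimal and the density hypothesis on $t(K)$ is shared by (a) and (b). So the only implication that needs a new argument is (a) $\Rightarrow$ (c), and here the local minimality criterion does all the work.

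For (a) $\Rightarrow$ (c) I argue by contraposition. Assume $t(K)$ is dense in $K$ and that $K$ contains a closed subgroup $N$ topologically isomorphic to $\Z_p$ for some prime $p$. Since $N$ is infinite and totally disconnected, Proposition \ref{corol}(b) (equivalently, Lemma \ref{corol1}(a)) applies to the dense subgroup $t(K)$: because $\Z_p$ is torsion-free, $t(K)\cap N\subseteq t(N)=\{0\}$, so $t(K)$ fails to be locally minimal. Thus whenever $t(K)$ is dense and locally minimal, $K$ contains no copy of $\Z_p$, which is (c). This is exactly the collapse phenomenon announced in the introduction: a dense locally minimal torsion subgroup of a compact group is forced to be even essential, hence minimal.

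It remains to prove the concluding assertion. Let $K$ be connected and assume (d); fix a prime $p$ and let us show that $N_p$ is finite. The key elementary fact is: an infinite compact abelian $p$-group contains a closed subgroup topologically isomorphic to $\Z_p$ or to $\Z(p)^\omega$. Dually, an infinite discrete abelian $p$-group $A$ surjects onto $\Z(p^\infty)$ if it is unbounded (if $A$ is not reduced, split off a copy of $\Z(p^\infty)$ as a direct summand; if $A$ is reduced, an unbounded basic subgroup of $A$ surjects onto $\Z(p^\infty)$, and the surjection extends to $A$ by injectivity of $\Z(p^\infty)$) and onto $\Z(p)^{(\omega)}$ if it is bounded (Pr\"ufer's theorem then presents $A$ as a direct sum of infinitely many finite cyclic $p$-groups). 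Now suppose $N_p$ were infinite. If $N_p$ contains a closed copy of $\Z_p$, then so does $K$, contradicting (c). Otherwise $N_p$---and hence $K$---contains a closed copy of $\Z(p)^\omega$, so $K^\wedge$ surjects onto $\Z(p)^{(\omega)}$; but $K$ is connected and finite-dimensional, so $K^\wedge$ is torsion-free of finite rank $d=\dim K$, whence $r_p(K^\wedge/pK^\wedge)\le d<\aleph_0$, which forbids such a surjection. Hence every $N_p$ is finite.

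I do not expect a genuine obstacle in (a) $\Rightarrow$ (c): once one observes that $\Z_p$ is simultaneously torsion-free and non-discrete, it is a one-line consequence of the local minimality criterion via Proposition \ref{corol}(b). The one step requiring real (if routine) work is the final remark, where one must control the possible infinite compact abelian $p$-subgroups of a connected finite-dimensional compact group; the dichotomy ``$\Z_p$ or $\Z(p)^\omega$'' recorded above is what makes this manageable. All the heavy structural input concerning the equivalence of (b), (c) and (d) is imported as a black box from \cite{DP}.
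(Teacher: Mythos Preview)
Your proof is correct and follows essentially the same route as the paper: (b) $\Rightarrow$ (a) is trivial, (a) $\Rightarrow$ (c) is obtained from Lemma \ref{corol1}(a) (equivalently Proposition \ref{corol}(b)) via the observation $t(K)\cap N\subseteq t(\Z_p)=\{0\}$, and the equivalence of (b), (c), (d) is imported from \cite{DP}. The one point of divergence is the final assertion about connected $K$: the paper simply cites \cite{DP} for this as well, whereas you supply an independent argument using the dichotomy that an infinite compact abelian $p$-group contains a closed copy of $\Z_p$ or of $\Z(p)^\omega$, and ruling out the latter via the bound $r_p(K^\wedge/pK^\wedge)\le \dim K$ for connected finite-dimensional $K$; your argument is sound and self-contained, though strictly more than the paper itself undertakes.
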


\begin{proof} Obviously (b) $\Rightarrow$ (a).  To see that (a) implies (c) apply Lemma \ref{corol1} (a). The equivalence among (b), (c) and  (d) and the restriction to the case of connected $K$
 were proved in \cite{DP}.
\end{proof}

Although the same compact groups share the properties  from items (a) and (b) of the above theorem, 
%
%
%
%
%
%
 the torsion abelian groups $G$ that admit precompact locally minimal group topologies need not admit also a minimal group topology. An example is the Pr\"ufer group $\Z(p^{\infty}),$ which admits no minimal group topology (\cite{DM}; (3.5.4) in \cite{DPS}) but, when endowed with the topology induced by the usual one on $\T$, is precompact and locally minimal (Corollary \ref{Crit3}).

\begin{example} Let $p$ be a prime.
\begin{itemize}
  \item[(a)] It follows from Lemma \ref{corol1} (c) that the group $G=\bigoplus_\omega \Z(p)$  does not admit any
precompact locally minimal group topology. (The completion $K$ of $G$ is a compact group of exponent $p$, hence, according to \cite[Theorem (25.9)]{HR} topologically isomorphic to $\Z(p)^\kappa$ for some infinite cardinal $\kappa$. Now Lemma \ref{corol1}(c) applies.)
  \item[(b)] Let us see now that the group $G= \bigoplus_\omega \Z(p^\infty)$  does not admit any precompact locally minimal group topology either.
  Indeed, assume that $G$ carries such a topology. Then its completion $K$ is a compact abelian group. Moreover, since $G$ is divisible, $K$ is divisible as well, so
  $K$ is connected. Since $G$ is torsion, we conclude that  $G\leq t(K)$, so $t(K)$ is a dense and locally minimal subgroup of $K$ (Corollary \ref{Crit2}(a)). Now by Theorem \ref{ExTor} (d),
  $K$ is finite-dimensional and there exists a closed totally disconnected subgroup $N$ of $K$ such that $K/N\cong \T^d$, where $d=\dim K$ and $N=\prod_p N_p$,
  where $N_p$ is a finite $p$-group for each prime $p$.
Let $K_p$ denote the subgroup of all $p$-torsion elements of $K.$
Then $K_p/(N\cap K_p) = K_p/N_p$ is isomorphic to a subgroup of $t(\T^d).$
On the other hand, $N_p\cap G$ is a finite subgroup of $G$.
Since
$G/(G\cap N_p)$ is a divisible $p$-group of infinite $p$-rank (as $r_p(N_p) < \infty$), we conclude that $G/(G\cap N_p)\cong \bigoplus_\omega \Z(p^\infty)$. Since $G/(G\cap N_p)$ is isomorphic to a subgroup of $ t(\T ^d)$ and $r_p(\T^d) =d$, this yields $r_p(G/(G\cap N_p)) \leq d$, a contradiction.
\end{itemize}
\end{example}


\section{Locally quasi-convex groups}

In \cite{LocMin} we introduced the  class  of locally GTG groups. As we showed there, it fits very well in the setting of locally
minimal groups as it gives a nice connection between this class and that of minimal groups. Since  precompact abelian groups (\cite[Example 5.6(a)]{LocMin}), as well as UFSS groups (\cite[Example 5.2(a)]{LocMin}), are locally GTG, this explains the importance of this new
class. On the other hand, minimal abelian groups are precompact,  so minimal abelian groups are both locally minimal and locally GTG.
We proved in  \cite[Theorem 5.10]{LocMin} that a Hausdorff abelian topological group is UFSS iff it is locally minimal, NSS and locally GTG.
We are going to pursue this line in this section considering the class of locally  quasi-convex groups, which also contains the class of precompact abelian groups and, as we will see, it fits equally well in the setting of locally minimal groups.

The section starts with  the notions and some basic properties of UFSS groups, locally GTG groups and locally quasi-convex groups.
%

\subsection{UFSS groups, locally GTG groups and locally quasi-convex groups}

Recall that a  Hausdorff topological group $(G,\tau)$ is
{\em uniformly free from small subgroups} (UFSS for short) if for some symmetric
neighborhood $U$ of $0$, the sets $(1/n)U$ form a neighborhood basis at $0$ for $\tau$.
Neighborhoods $U$ satisfying this condition
will be said to be {\em distinguished.} It is easy to see that any symmetric neighborhood
of zero contained in a distinguished one is distinguished, as well.
One can see that a UFSS group $(G,\tau)$
with distinguished neighborhood $U$ has the following property, which trivially implies that $(G,\tau)$ is $U$-locally minimal: if
$\tcal$ is a group topology on $G$ such that $U$ is a $\tcal$-neighborhood  of $0$, then $\tau \leq \tcal$.
All UFSS groups are NSS groups.
A topological vector space is UFSS as a topological abelian group
if and only if it is locally bounded (see \cite[Example 3.2(c)]{LocMin}). In particular every normed space is a
UFSS group.

\begin{proposition}(\cite[Proposition 3.12]{LocMin})\label{perm_prop_3} The class of UFSS groups has the
following permanence properties:
\begin{itemize}
\item[(a)]  If $G$ is a dense subgroup of $\wt{G}$ and $G$ is  UFSS, then
$\wt{G}$ is UFSS.
\item[(b)] Every subgroup of a UFSS group is UFSS.
\item[(c)] Every group locally isomorphic  to a UFSS group  is UFSS.
\end{itemize}
\end{proposition}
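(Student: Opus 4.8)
The plan is to handle the three properties separately, dispatching (b) and (c) quickly and reserving the real effort for the behaviour under completion in (a).

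\emph{Part (b).} Let $U$ be a symmetric distinguished neighbourhood of $0$ in the UFSS group $G$ and let $H\leq G$. Put $U_H:=U\cap H$, a symmetric neighbourhood of $0$ in the subspace topology of $H$. The only point to check is the identity $(1/n)U_H=\bigl((1/n)U\bigr)\cap H$ for every $n\in\N$, which is immediate: for $x\in H$ one has $kx\in H$ automatically, so $kx\in U_H\iff kx\in U$. Since $\{(1/n)U:n\in\N\}$ is a neighbourhood basis at $0$ in $G$, its trace on $H$, namely $\{(1/n)U_H:n\in\N\}$, is a neighbourhood basis at $0$ in $H$, so $H$ is UFSS with distinguished neighbourhood $U_H$. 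For \emph{part (c)}: by the definition of local isomorphism there are an open neighbourhood $W$ of $0$ in $G$, an open neighbourhood $W'$ of $0$ in the UFSS group $G'$, and a homeomorphism $f\colon W\to W'$ which is a local homomorphism (and so is $f^{-1}$). Choose a symmetric distinguished neighbourhood $U'$ of $0$ in $G'$ with $U'\subseteq W'$ (any symmetric neighbourhood inside a distinguished one is distinguished) and set $U:=f^{-1}(U')\subseteq W$. If $x\in G$ satisfies $kx\in U$ for $k=1,\dots,n$, then $x,2x,\dots,nx\in U\subseteq W$, and iterating the local-homomorphism property gives $f(kx)=kf(x)$ for $k\leq n$, hence $kf(x)\in U'$, i.e. $f(x)\in(1/n)U'$; the converse is symmetric. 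Thus $f$ maps $(1/n)U$ bijectively onto $(1/n)U'$ for each $n$, so $\{(1/n)U:n\in\N\}$ is a neighbourhood basis at $0$ in $W$, hence in $G$ (as $W$ is a neighbourhood of $0$). Therefore $G$ is UFSS.

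\emph{Part (a).} Let $U$ be a symmetric distinguished neighbourhood of $0$ in $(G,\tau)$; replacing $U$ by a smaller closed symmetric neighbourhood of $0$ we may assume $U$ is $\tau$-closed (note that, $\{(1/n)U\}_n$ being a countable neighbourhood basis at $0$, the group $G$ is metrizable by Birkhoff--Kakutani, and $\widetilde G$ is its completion). Set $V:=\overline{U}^{\widetilde G}$, a closed, symmetric neighbourhood of $0$ in $\widetilde G$ (a neighbourhood because $U$ is a neighbourhood of $0$ in the dense subgroup $G$). The goal is to prove that $V$ is distinguished in $\widetilde G$, i.e. that $\{(1/n)V:n\in\N\}$ is a neighbourhood basis at $0$ in $\widetilde G$. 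One inclusion is soft: the multiplication maps $\mu_k\colon\widetilde G\to\widetilde G$, $x\mapsto kx$, are continuous and $k\cdot\bigl((1/n)U\bigr)\subseteq U$ for $k\leq n$, so $\mu_k\bigl(\overline{(1/n)U}^{\widetilde G}\bigr)\subseteq\overline{U}^{\widetilde G}=V$ for $k\leq n$, whence $\overline{(1/n)U}^{\widetilde G}\subseteq(1/n)V$. Since the closures in $\widetilde G$ of the members of a $\tau$-neighbourhood basis at $0$ form a neighbourhood basis at $0$ in $\widetilde G$, this shows that each $(1/n)V$ is a neighbourhood of $0$ in $\widetilde G$.

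The reverse requirement --- that the sets $(1/n)V$ become arbitrarily small, i.e. that every neighbourhood of $0$ in $\widetilde G$ contains some $(1/m)V$ --- is the crux, and I expect it to be the main obstacle. The naive idea of approximating a point $x\in(1/m)V$ by some $g\in G$ and hoping that $g\in(1/n)U$ fails, because a distinguished $U$ need not absorb small neighbourhoods, so the multiples $kg$ may drift out of $U$ as $g$ approaches the ``boundary'' of $V$. Instead the plan is to use completeness of $\widetilde G$ together with the relations $(1/\varphi(n))U+(1/\varphi(n))U\subseteq(1/n)U$, valid for a suitable function $\varphi\colon\N\to\N$ because $\{(1/n)U\}_n$ is a neighbourhood basis at $0$ for the group topology $\tau$, to show that $(1/m)V\subseteq\overline{(1/n)U}^{\widetilde G}$ for $m=m(n)$ large enough; since the sets $\overline{(1/n)U}^{\widetilde G}$ form a neighbourhood basis at $0$ in $\widetilde G$, this finishes the proof. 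Equivalently, one can carry this out through the quasi-norm attached to $U$ (the balls of whose unique uniformly continuous extension to $\widetilde G$ coincide, up to constants, with the sets $(1/n)V$), which is the form in which the argument appears in \cite{LocMin}.
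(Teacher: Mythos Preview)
The paper does not prove this proposition at all: it is quoted verbatim from the companion paper \cite{LocMin} and used as a black box, so there is no ``paper's own proof'' to compare your attempt with. Your write-up therefore already goes well beyond what the present paper offers.

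Your arguments for (b) and (c) are correct and complete. For (a) your set-up is right: taking $V=\overline{U}^{\widetilde G}$ and observing $\overline{(1/n)U}^{\widetilde G}\subseteq(1/n)V$ shows that each $(1/n)V$ is a neighbourhood of $0$. The genuine content, as you say, is the reverse inclusion $(1/m)V\subseteq\overline{(1/n)U}^{\widetilde G}$ for suitable $m$. Here your text is only a plan, and the plan as stated is not obviously executable: from $kx\in\overline U$ for $k\le m$ one cannot directly manufacture an approximating $g\in(1/n)U$, because a $g\in G$ close to $x$ may have multiples $kg$ that drift outside $U$ even though $kx\in\overline U$ (closeness of $kg$ to $\overline U$ does not force $kg\in U$, and the relations $(1/\varphi(n))U+(1/\varphi(n))U\subseteq(1/n)U$ transfer to the closures $V_n=\overline{(1/n)U}$ but say nothing about the a priori larger sets $(1/n)V$). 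In other words, the GTG-type inequalities you invoke control the family $\{V_n\}$, not the family $\{(1/n)V\}$, and bridging the two is exactly the missing step.

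The clean way around this --- and, as you correctly note, the route taken in \cite{LocMin} --- is to pass through the quasi-norm associated with $U$: it is uniformly continuous, hence extends to $\widetilde G$, and its balls give the required basis. So your final sentence is the actual proof; the preceding ``direct'' sketch should either be completed (which is fiddly) or dropped in favour of the quasi-norm argument you already cite.
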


 Let $G$ be  an abelian group and let $U$ be a symmetric subset of $G$ such that $0\in U.$ We say that $U$ is a {\em group topology
generating} subset of $G$ (``GTG subset of $G$" for short) if the sequence of subsets $\{(1/n)U \,:\,n\in \N\}$ is a basis of
neighborhoods of zero for a (not necessarily Hausdorff) group topology ${\mathcal T}_U$ on $G$.
A symmetric subset $U\subseteq G$ of an abelian group $G$
is a GTG subset if and only if
  $
  \exists m\in \N\ \mbox{with} \ (1/m)U + (1/m)U \subseteq U
  $ (see \cite[Proposition 4.4]{LocMin} ).

  A Hausdorff
topological abelian group $G$ is {\em locally GTG} if it admits a basis of neighborhoods of the identity formed by GTG subsets of $G$.

The {\em polar}  of  a subset $A \subseteq G$ is the set
$A^\triangleright : = \{\chi\in G^{\wedge}: \chi(A)\subseteq
\mathbb{T}_{+} \}$. The
{\em inverse polar} of a subset $B$ of $G^{\wedge}$ is the set
$B^\triangleleft : = \{x\in G: \chi(x)\in \mathbb{T}_{+}\; \forall
\chi \in B\}$. The {\em quasi-convex hull} of a subset   $A
\subseteq G$ is
$Q(A):=(A^\triangleright)^\triangleleft=\bigcap_{\chi\in
A^\triangleright }\chi^{-1}(\mathbb{T}_{+})$;   the set $A$ is said
to be {\it quasi-convex} if $Q(A)=A$ (i.e. if for every $x\in
G\setminus A$ there exists a  character $\chi\in G^{\wedge}$ such
that $\chi(A)\subseteq \T_+$ and $\chi(x)\notin \T_+$). When $A$ is
a subgroup of $G,$ its polar $A^\triangleright$ coincides with its
{\em annihilator} $A^{\perp}$, i. e. the subgroup of $G^{\wedge}$ formed by
those characters $\chi$ such that $\chi(A)=\{0\}$.

A topological abelian group is {\it locally quasi-convex} if it admits a basis of neighborhoods of zero formed by quasi-convex sets. Vilenkin was the first to define locally quasi-convex groups (\cite {vilenkin}), for groups with a boundedness. A natural example of a locally quasi-convex group is the underlying group of a locally convex space.  Moreover, a Hausdorff topological
vector space is locally quasi-convex as an (additive) topological abelian group if and only if it is a locally convex space
(\cite[2.4]{BTVS}). Subgroups and products of locally quasi-convex groups are locally quasi-convex. All character groups (endowed with
the compact open topology) are locally quasi-convex; in particular compact abelian (and thus precompact abelian) groups are locally quasi-convex, as duals of discrete abelian groups.

Quotient groups of locally quasi-convex groups are in general not locally quasi-convex. Even a quotient group of a locally
quasi-convex group with respect to a discrete subgroup need not be locally quasi-convex, despite being locally isomorphic to the group (see \cite{BB}).

It is shown in \cite[Example 3.5(b)]{LocMin} that  UFSS groups may fail to be maximally almost periodic. One may ask whether a maximally almost periodic UFSS group $G$ is already locally quasi-convex. A counterexample to this effect follows.

\begin{example}\label{lp}
Fix any $s\in (0,1)$ and consider the topological vector space $l^{s}$ of all sequences of real numbers which are summable in power $s$,
with the topology given by the following basis of neighborhoods of zero:
$$
U_r=\left\{(x_n)\in \mathbb R^{\mathbb N}\,:\,\sum_{n=1}^{\infty} |x_n|^s \le r\right\},\quad
r>0
$$
This space is not locally convex (see for instance \cite[Chapter 2]{Kal}), hence  the topological abelian group which underlies it is
not locally quasi-convex. 
Since $l^{s}$ is locally bounded (as $U_1\subseteq r^{-1/s}U_r$ for every $r>0$ )
it is  UFSS. On the other hand,  it is clear that the   characters of the form $x \mapsto q(\lambda
x_n)$ ($n\in {\mathbb N},\;\lambda \in {\mathbb R}$) separate the points of the group, where $q:\mathbb R \to \mathbb T$ is the canonical projection.
\end{example}

\begin{proposition}\label{Bemerkung}  Let $G$ be a topological abelian group.
\begin{itemize}
\item[(a)] 
For the canonical projection $q:\R \to \T$, $q([-1/4n,1/4n])=(1/n)\T_+$
\item[(b)] Let $U$ be a quasi-convex subset of $G$. For any $n\in {\mathbb N},$
$$
(1/n) U=\bigcap_{\chi\in U^\triangleright }\chi^{-1}((1/n)\T_+)= \bigcap_{\chi\in U^\triangleright}\chi^{-1}(q([-1/4n,1/4n])), \;  \text{and}\;
U_{\infty}=\bigcap_{\chi \in U^{\triangleright}}{\rm ker}\chi.
$$
All the sets $(1/n)U$ are quasi-convex. In particular, if $U$ is a quasi-convex neighborhood of $0$, so are the sets $(1/n)U$ for all $n\in\N$.
 \item[(c)] Let $U$ be a quasi-convex subset of $G$. For any $n\in {\mathbb N},$ $(1/2n)U+(1/2n)U\subseteq (1/n)U$. In particular $U$ is a GTG subset of $G$.
\item[(d)] If $G$ is locally quasi-convex, $G$ is locally GTG.
\end{itemize}
\end{proposition}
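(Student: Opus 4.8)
The plan is to establish the four items in the order stated, with (b), (c) and (d) each built on its predecessor; the only place that needs genuine care is the elementary arc computation in (a).

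For (a) I would argue directly in $\T$. The inclusion $q([-1/4n,1/4n])\subseteq (1/n)\T_+$ is immediate: if $t=q(s)$ with $|s|\le 1/4n$, then $|ks|\le 1/4$ for every $k\le n$, so $kt\in\T_+$. For the reverse inclusion I take $t\in(1/n)\T_+$ together with its representative $s$ of least absolute value; since $t=1\cdot t\in\T_+$, $|s|\le 1/4$. If $|s|>1/4n$, let $k$ be the least positive integer with $k|s|>1/4$. Then $k\le n$ (because $n|s|>1/4$), and by minimality $1/4<k|s|\le 1/4+|s|\le 1/2$, so $q(ks)\notin\T_+$, contradicting $t\in(1/n)\T_+$. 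Hence $|s|\le 1/4n$ and $t\in q([-1/4n,1/4n])$.

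For (b) the hypothesis that $U$ is quasi-convex gives $U=\bigcap_{\chi\in U^\triangleright}\chi^{-1}(\T_+)$. Then $x\in(1/n)U$ iff $kx\in U$ for all $k\le n$ iff $k\chi(x)\in\T_+$ for all $\chi\in U^\triangleright$ and all $k\le n$, i.e.\ iff $\chi(x)\in(1/n)\T_+$ for all $\chi\in U^\triangleright$; this is the first displayed equality, and (a) rewrites $(1/n)\T_+$ as $q([-1/4n,1/4n])$ to give the second. The same computation with ``for all $n\in\N$'' in place of ``for $k\le n$'' yields $U_\infty=\bigcap_{\chi\in U^\triangleright}\chi^{-1}\big(\bigcap_n(1/n)\T_+\big)$, and $\bigcap_n(1/n)\T_+=\bigcap_n q([-1/4n,1/4n])=\{0\}$ by (a), whence $U_\infty=\bigcap_{\chi\in U^\triangleright}\ker\chi$. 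For quasi-convexity of $(1/n)U$ I would note, again using (a), that $(1/n)\T_+=\{1,2,\dots,n\}^\triangleleft$ computed inside $\T$ (with $\Z=\T^\wedge$), so $\chi^{-1}((1/n)\T_+)=\{k\chi:1\le k\le n\}^\triangleleft$ is an inverse polar in $G$; intersecting over $\chi\in U^\triangleright$ exhibits $(1/n)U$ itself as an inverse polar, hence quasi-convex. If $U$ is in addition a neighbourhood of $0$, then $(1/n)U=\bigcap_{k=1}^n\mu_k^{-1}(U)$, where $\mu_k(x)=kx$ is continuous with $\mu_k(0)=0$, so $(1/n)U$ is a neighbourhood of $0$ too.

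For (c), given $x,y\in(1/2n)U$ and $\chi\in U^\triangleright$, the second equality in (b) lets me pick representatives $\chi(x)=q(s)$ and $\chi(y)=q(s')$ with $|s|,|s'|\le 1/8n$; then $|s+s'|\le 1/4n$, so $\chi(x+y)=q(s+s')\in q([-1/4n,1/4n])=(1/n)\T_+$. As $\chi\in U^\triangleright$ was arbitrary, $x+y\in(1/n)U$. Specialising to $n=1$ and using $(1/1)U=U$ gives $(1/2)U+(1/2)U\subseteq U$, so $U$ is a GTG subset by the criterion \cite[Proposition 4.4]{LocMin}. Finally (d) is immediate: if $G$ admits a basis of quasi-convex neighbourhoods of $0$, then by (c) this same basis consists of GTG subsets, so $G$ is locally GTG. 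The only mildly delicate point anywhere is the bookkeeping with the constants $1/4$, $1/4n$ and with representatives mod $1$ in (a) and (c); everything else is formal manipulation of polars.
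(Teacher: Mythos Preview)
Your proof is correct and follows essentially the same route as the paper's own argument; the paper dismisses (a) as ``standard'' and the first part of (b) as ``straightforward'', whereas you supply the details, but the substantive step in (c) (choosing representatives in $[-1/8n,1/8n]$ via (b) and adding) is identical. Your justification of the quasi-convexity of $(1/n)U$ by exhibiting it explicitly as an inverse polar is a slightly more concrete variant of the paper's one-line remark that $(1/n)\T_+$ is quasi-convex, but the underlying idea is the same.
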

\begin{proof}
(a) is standard. The first part of (b) is straightforward, the quasi-convexity of $(1/n)U$ is a consequence of the fact that $(1/n)\T_+$ is quasi-convex.

 (c) Fix $x,y\in (1/2n)U$. According to (b), $\chi(x),\chi(y)\in q([-1/8n,1/8n])$  for all $\chi\in U^\triangleright$, which implies $\chi(x+y)\in q([-1/4n,1/4n])$  and hence, by (a),
  $x+y\in (1/n) U$.

 (d) is an immediate consequence of (c).
\end{proof}

The following proposition  allows us to find large, in appropriate sense, minimal subgroups in a locally minimal group.

\begin{proposition}\label{minsubgr} Let $G$ be a
$U$-locally minimal group.
\begin{itemize}\item[(a)] ({\rm \cite[Lemma 2.3]{DM}}) Let $H$ be a closed central subgroup
of $G$ such
that $H+V\subseteq U$ for some neighborhood $V$ of $0$ in $G$. Then $H$
is   minimal.
\item[(b)] Assume additionally that $U$ is a GTG set. Then
\begin{itemize}
\item[(b1)]  {\rm (\cite[ Theorem 5.12]{LocMin})} $U_\infty$ is a minimal subgroup.
\item[(b2)] {\rm (\cite[Corollary 5.15]{LocMin})} $(1/m)U=U_\infty$ is an open minimal subgroup provided that $G$ has finite exponent $m.$
\end{itemize}
\end{itemize}
\end{proposition}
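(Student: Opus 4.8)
Both parts are explicitly cited as coming from \cite{DM} and \cite{LocMin}, so the task is to reconstruct their proofs from the tools available in the excerpt. I would begin with (a). Suppose $\sigma \leq \tau$ is a Hausdorff group topology on $G$ such that $H$ remains $\sigma$-open-neighborhood-accessible in the sense needed; more precisely, one wants to produce a coarser Hausdorff group topology agreeing with $\tau$ off a neighborhood of $H$ and then invoke $U$-local minimality. The cleaner route: assume for contradiction that $H$ carries a strictly coarser Hausdorff group topology, equivalently (by the usual argument, cf. the proof of Theorem \ref{crit}) that there is a closed normal subgroup-type obstruction. Actually the standard DM argument goes as follows. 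Let $\sigma$ be a Hausdorff group topology on $H$ coarser than $\tau|_H$. Form the group topology $\tau'$ on $G$ whose neighborhood base at $0$ consists of sets $N_\sigma + V'$ where $N_\sigma$ runs over $\sigma$-neighborhoods of $0$ in $H$ and $V'$ over $\tau$-neighborhoods of $0$ in $G$ with $V' \subseteq V$; since $H$ is central and closed and $H + V \subseteq U$, one checks $\tau'$ is a Hausdorff group topology with $\tau' \le \tau$ and $U \in \mathcal V_{\tau'}(0)$. By $U$-local minimality, $\tau' = \tau$, and restricting to $H$ forces $\sigma = \tau|_H$. The point where care is needed is verifying $\tau'$ is Hausdorff: this uses that $H$ is closed in $(G,\tau)$ together with Hausdorffness of $\sigma$, exactly as in the last paragraph of the proof of Theorem \ref{crit}.

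For (b1), assume $U$ is a GTG set, so by definition the sets $(1/n)U$ form a neighborhood base at $0$ for a group topology $\mathcal T_U$, and recall $U_\infty = \bigcap_{n} (1/n)U$ is the set of $x$ with $nx \in U$ for all $n$. First I would check $U_\infty$ is a subgroup: if $x, y \in U_\infty$ then by the GTG property there is $m$ with $(1/m)U + (1/m)U \subseteq U$, and more generally one gets $U_\infty + U_\infty \subseteq U_\infty$ by iterating (for each $n$, $nx, ny$ lie in suitable small sets whose sum lands in $(1/1)U$; the precise bookkeeping is the $\mathcal T_U$-continuity of addition). Closedness of $U_\infty$ in $(G,\tau)$: since $\mathcal T_U \le \tau$ (because $U \in \mathcal V_\tau(0)$ gives $(1/n)U \in \mathcal V_\tau(0)$) and $U_\infty = \bigcap_{\chi}\ker\chi$ is $\mathcal T_U$-closed when $U$ is quasi-convex — but here $U$ is only GTG, so instead argue $U_\infty$ is $\mathcal T_U$-closed directly as an intersection of $\mathcal T_U$-closed sets, or simply note $U_\infty$ equals the $\mathcal T_U$-closure of $\{0\}$... no — the honest statement is $U_\infty = \overline{\{0\}}^{\mathcal T_U}$ is false in general; rather $U_\infty \supseteq \overline{\{0\}}^{\mathcal T_U}$ and $U_\infty$ is a $\tau$-closed subgroup. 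Then $U_\infty + (1/m)U \subseteq (1/?)U \subseteq U$ for a suitable shift using the GTG inequality, so $U_\infty$ satisfies the hypothesis of part (a) (with $V = (1/m)U$), whence $U_\infty$ is minimal. This reduction of (b1) to (a) is, I expect, the intended proof.

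For (b2), suppose $G$ has finite exponent $m$. Then for every $x \in G$ and every $n \ge m$ we have $\{kx : 1 \le k \le n\} = \{kx : 1 \le k \le m\}$ as sets (since $kx$ depends only on $k \bmod m$), so $(1/n)U = (1/m)U$ for all $n \ge m$, and hence $(1/m)U = \bigcap_{n}(1/n)U = U_\infty$. Since the $(1/n)U$ form a base for $\mathcal T_U$ and they stabilize at $(1/m)U$, the set $(1/m)U$ is $\mathcal T_U$-open, hence $\tau$-open? No — $\mathcal T_U \le \tau$ gives $\mathcal T_U$-open $\Rightarrow$ we need the reverse. Here is the fix: $(1/m)U \in \mathcal V_\tau(0)$ directly (it is a finite intersection of $\tau$-neighborhoods $\frac1k U$), and being equal to the subgroup $U_\infty$ it is an open subgroup of $(G,\tau)$; combined with (b1) it is an open minimal subgroup. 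The main obstacle throughout is the routine but fiddly verification that $U_\infty$ is a subgroup and that $U_\infty + V \subseteq U$ for an appropriate neighborhood $V$ — i.e. converting the single GTG inequality $(1/m)U + (1/m)U \subseteq U$ into the statement needed to feed part (a) — so I would state the GTG characterization (from \cite[Proposition 4.4]{LocMin}, quoted in the excerpt) and extract the consequence $(1/mk)U + (1/k)U_\infty$-type containments carefully rather than gesture at them.
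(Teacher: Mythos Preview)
The paper does not prove this proposition at all; it is simply quoted from \cite{DM} and \cite{LocMin} with explicit citations attached to each part. So there is no ``paper's own proof'' to compare against, and your task is really to reconstruct the arguments from scratch.

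Your outline for (a) is the correct one and matches the standard argument: given a coarser Hausdorff topology $\sigma$ on $H$, build $\tau'$ on $G$ with basic neighborhoods $W+V'$ ($W\in\mathcal V_\sigma(0)$, $V'\in\mathcal V_\tau(0)$, $V'\subseteq V$), verify $\tau'\le\tau$, $U\in\mathcal V_{\tau'}(0)$, and Hausdorffness (this last step is where closedness of $H$ enters, via $\bigcap_{V'}(H+V')=\overline H^\tau=H$). Local minimality forces $\tau'=\tau$, and restricting to $H$ gives $\sigma=\tau|_H$. Your sketch of (b2) is also fine.

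For (b1), your reduction to (a) is the right idea, and the containment $U_\infty+(1/m)U\subseteq (1/m)U+(1/m)U\subseteq U$ is exactly what feeds part~(a). But you talk yourself out of the cleanest argument for closedness. In any topological group the closure of $\{0\}$ equals the intersection of all neighborhoods of $0$; since $\{(1/n)U:n\in\N\}$ is a neighborhood base for $\mathcal T_U$, one has
\[
\overline{\{0\}}^{\mathcal T_U}=\bigcap_{n\in\N}(1/n)U=U_\infty,
\]
so your claim that this equality ``is false in general'' is wrong. Consequently $U_\infty$ is $\mathcal T_U$-closed, and since $\mathcal T_U\le\tau$ (each $(1/n)U$ is a finite intersection of preimages of the $\tau$-neighborhood $U$ under continuous multiplication maps), $U_\infty$ is $\tau$-closed. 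This also shows immediately that $U_\infty$ is a subgroup (closures of subgroups are subgroups), so you can dispense with the separate bookkeeping argument for $U_\infty+U_\infty\subseteq U_\infty$. With closedness in hand, part~(a) applies directly and (b1) follows.
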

For a topological abelian group $G$, let $\sigma(G,G^{\wedge})$ denote the initial topology on $G$ with respect to all continuous characters of $G$.
\begin{proposition}\label{injalmmin}
Let $(G,\tau)$ be a maximally almost periodic abelian topological group. If
$G$ is locally minimal with respect to a GTG neighborhood $U$, then $\tau$ is the supremum of the topologies $\sigma(G,G^\wedge)$ and ${\cal T}_U$. In particular $G$  is locally GTG.
\end{proposition}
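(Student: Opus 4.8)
The plan is to show the two relevant inequalities between $\tau$ and $\nu := \sigma(G,G^\wedge) \vee {\cal T}_U$. One direction is essentially free: since $U$ is a $\tau$-neighborhood of $0$ and $U$ is a GTG set, the topology ${\cal T}_U$ generated by $\{(1/n)U : n \in \N\}$ satisfies ${\cal T}_U \leq \tau$ (each $(1/n)U$ is a $\tau$-neighborhood of $0$ because $\tau$ is a group topology); and since all characters in $G^\wedge$ are $\tau$-continuous, $\sigma(G,G^\wedge) \leq \tau$. Hence $\nu \leq \tau$.

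For the reverse inequality, the key observation is that $\nu$ is a Hausdorff group topology on $G$ for which $U$ is a $\nu$-neighborhood of $0$. It is a group topology as a supremum of group topologies; $U \in {\cal V}_{{\cal T}_U}(0) \subseteq {\cal V}_\nu(0)$; and it is Hausdorff because $\sigma(G,G^\wedge)$ already is — this is exactly where maximal almost periodicity of $(G,\tau)$ is used (the characters separate points, so $\alpha_G$ is injective and $\sigma(G,G^\wedge)$ is Hausdorff, hence so is the finer topology $\nu$). Now invoke that $(G,\tau)$ is locally minimal with respect to $U$: any Hausdorff group topology on $G$ that is $\leq \tau$ and for which $U$ remains a neighborhood of $0$ must coincide with $\tau$. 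Applying this to $\nu$ gives $\nu = \tau$, which is the desired identity $\tau = \sigma(G,G^\wedge) \vee {\cal T}_U$.

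Finally, to conclude that $G$ is locally GTG, note that ${\cal T}_U$ has the basis $\{(1/n)U\}$ of GTG neighborhoods of $0$ (Proposition \ref{Bemerkung}(c), or the definition, gives that each $(1/n)U$ is GTG once $U$ is), while $\sigma(G,G^\wedge)$ has a basis of neighborhoods of $0$ consisting of finite intersections $\bigcap_{i=1}^k \chi_i^{-1}(\T_+)$, and such sets are GTG — indeed $\T_+ = q([-1/4,1/4])$ is quasi-convex hence GTG by Proposition \ref{Bemerkung}(c), preimages of GTG sets under homomorphisms are GTG, and a finite intersection of GTG sets is GTG (check the defining inequality $(1/m)W + (1/m)W \subseteq W$ componentwise with a common $m$). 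Since $\tau = \sigma(G,G^\wedge) \vee {\cal T}_U$, a basis of $\tau$-neighborhoods of $0$ is obtained by intersecting a basic $\sigma(G,G^\wedge)$-neighborhood with a basic ${\cal T}_U$-neighborhood, and this intersection is again GTG by the same stability properties; hence $G$ is locally GTG.

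The main obstacle — really the only nontrivial point — is verifying that GTG sets are stable under finite intersections and under preimages by characters, so that the supremum topology $\sigma(G,G^\wedge) \vee {\cal T}_U$ genuinely has a GTG neighborhood basis. I expect this to follow routinely from the characterization "$U$ is GTG iff $\exists m$ with $(1/m)U + (1/m)U \subseteq U$" recalled before Proposition \ref{Bemerkung}, using a common $m$ for the finitely many sets involved and the elementary identity $(1/n)(A \cap B) = (1/n)A \cap (1/n)B$; the Hausdorffness of $\nu$ via maximal almost periodicity is the conceptual heart but is immediate once stated.
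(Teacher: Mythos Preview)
Your proof is correct and follows essentially the same route as the paper's: show $\nu:=\sigma(G,G^\wedge)\vee{\cal T}_U\leq\tau$, observe that $\nu$ is Hausdorff (by maximal almost periodicity) and has $U$ as a neighborhood of $0$, then invoke $U$-local minimality to get $\nu=\tau$. For the ``locally GTG'' conclusion the paper simply appeals to the fact that a supremum of finitely many locally GTG topologies is locally GTG, while you spell this out by hand; one minor quibble is that your reference to Proposition~\ref{Bemerkung}(c) concerns quasi-convex sets rather than general GTG sets, but your parenthetical ``or the definition'' covers it (indeed $(1/mn)U+(1/mn)U\subseteq(1/n)U$ whenever $(1/m)U+(1/m)U\subseteq U$).
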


\begin{proof}
The supremum topology $\sigma(G,G^{\wedge}) \vee {\cal T}_U$ is obviously coarser than  $\tau$. It is  well known that  $\sigma(G,G^{\wedge})$ is a Hausdorff group topology if and only if $G$ is maximally almost periodic. Hence $\sigma(G,G^{\wedge}) \vee {\cal T}_U$ is a Hausdorff topology. Obviously,  $U$ is a neighbourhood of $0$ in $\sigma(G,G^{\wedge}) \vee {\cal T}_U$.
So, since $G$ is $U$-locally minimal,
$ \sigma(G,G^{\wedge}) \vee {\cal T}_U=\tau$ holds. 
As the supremum of finitely many locally GTG group topologies, $\tau$ is locally GTG as well. \end{proof}


\subsection{Locally quasi-convex UFSS  group topologies}
In this subsection we continue the study of the algebraic structure of locally minimal groups we started in \cite[\S 5.2]{LocMin}. We show that every unbounded abelian group  can be endowed with a non-discrete locally quasi-convex UFSS group topology. This is of interest, since for no prime $p$ the group $\Z(p^\infty)$ admits a minimal group topology (\cite{DM}).

\begin{lemma}\label{ex_c_0}  For every increasing sequence $(m_n)$ of natural numbers the group $\bigoplus_{n\in\N}\Z(p^{m_n})$ admits
a non-discrete  locally quasi-convex UFSS and hence locally minimal group topology.
\end{lemma}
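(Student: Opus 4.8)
The plan is to construct an explicit UFSS topology on $G=\bigoplus_{n\in\N}\Z(p^{m_n})$ and verify it is locally quasi-convex. Write each $x\in G$ as a finitely-supported sequence $(x_n)$ with $x_n\in\Z(p^{m_n})$, and for $x_n\neq 0$ let $\nu(x_n)$ be the largest $k$ such that $x_n\in p^k\Z(p^{m_n})$, so $0\le\nu(x_n)<m_n$; identify $\Z(p^{m_n})$ with the subgroup $p^{-m_n}\Z/\Z$ of $\T$ via $x_n\mapsto q(a_n/p^{m_n})$ where $a_n$ represents $x_n$. The natural candidate for the distinguished neighborhood is a ``$c_0$-like'' set: fix a sequence of weights and put $U=\{x\in G:\ |\iota_n(x_n)|\le \varepsilon_n\ \text{for all }n\}$ for a suitable sequence $\varepsilon_n\to 0$ (for instance $\varepsilon_n$ chosen so that $U$ forces $x_n$ to lie in $p^{m_n-1}\Z(p^{m_n})$ for all but finitely many $n$, while imposing no restriction on the first few coordinates). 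Here $\iota_n:\Z(p^{m_n})\hookrightarrow\T$ is the embedding above and $|\cdot|$ denotes the distance to $0$ in $\T$.

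First I would compute $(1/k)U$ explicitly: since the coordinates are independent and each character-like functional $x\mapsto \iota_n(x_n)$ behaves additively, $(1/k)U=\{x:\ |\iota_n(x_n)|\le \varepsilon_n/k\ \text{for all }n\}$ up to the standard identification $q([-1/4k,1/4k])=(1/k)\T_+$ from Proposition \ref{Bemerkung}(a). Then I would check that $\bigcap_k (1/k)U=U_\infty=\bigcap_k\{x:|\iota_n(x_n)|\le\varepsilon_n/k\}$ is trivial: for each fixed $n$, $\iota_n(x_n)$ ranges in a finite subset of $\T$, so $|\iota_n(x_n)|\le\varepsilon_n/k$ for all $k$ forces $\iota_n(x_n)=0$, hence $x_n=0$. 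Thus the topology $\tau_U$ generated by $\{(1/k)U\}$ is Hausdorff, and it is UFSS by construction. Non-discreteness follows because $(m_n)$ is increasing and unbounded: for each $k$ the set $(1/k)U$ still contains the full subgroup $\Z(p^{m_n})$ in coordinate $n$ for infinitely many $n$ (those with $m_n$ large enough that even $\varepsilon_n/k\ge 1/p^{m_n}\cdot(\text{something})$ fails to bite) — more precisely, I will choose $\varepsilon_n$ decaying slowly enough that no $(1/k)U$ collapses to a subset of a finite subproduct, so $(1/k)U$ is never $\{0\}$, whence $\tau_U$ is not discrete.

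The remaining point — and the one I expect to be the main obstacle — is verifying local quasi-convexity, i.e. that $U$ (equivalently each $(1/k)U$) is a quasi-convex subset of $(G,\tau_U)$. By Proposition \ref{Bemerkung}(b), $(1/k)U$ quasi-convex for all $k$ follows once $U$ itself is quasi-convex, so it suffices to show $Q(U)=U$. The characters I have at my disposal are exactly the maps $\chi_{n,\lambda}:x\mapsto q(\lambda\, a_n/p^{m_n})$ for $n\in\N$ and $\lambda\in\Z$ (plus finite sums of these), and these are $\tau_U$-continuous because each factors through a single coordinate on which $\tau_U$ induces the (discrete, hence any) topology after dividing. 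Given $x\notin U$, there is an index $n_0$ with $|\iota_{n_0}(x_{n_0})|>\varepsilon_{n_0}$; I need a single character $\chi$ with $\chi(U)\subseteq\T_+$ and $\chi(x)\notin\T_+$. The natural choice is $\chi=\chi_{n_0,\lambda}$ for an appropriate multiplier $\lambda$ (depending on $\nu(x_{n_0})$), chosen so that $\lambda\cdot\iota_{n_0}(\cdot)$ maps the allowed range $\{|t|\le\varepsilon_{n_0}\}\cap\iota_{n_0}(\Z(p^{m_n_0}))$ into $\T_+=q([-1/4,1/4])$ but sends $\iota_{n_0}(x_{n_0})$ outside $\T_+$; this is a finite elementary-number-theory computation in $\Z(p^{m_{n_0}})$, and it is exactly where the choice of the $\varepsilon_n$ must be made compatible (each $\varepsilon_n$ should be of the form $c/p^{m_n}$ or a fixed small constant times a power of $p$, so that the ``rescaling by $\lambda$'' argument goes through uniformly). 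Granting this, $U$ is quasi-convex, all $(1/k)U$ are quasi-convex by Proposition \ref{Bemerkung}(b), they form a neighborhood basis at $0$, so $(G,\tau_U)$ is locally quasi-convex, UFSS, non-discrete, and hence locally minimal since every UFSS group is locally minimal.
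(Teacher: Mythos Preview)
Your construction is in spirit the same $c_0$-type topology the paper builds, but you have introduced an unnecessary complication and left the decisive step as an unsubstantiated gesture. The paper realizes $G$ concretely as the subgroup $\langle p^{-m_n}e_n + D : n\in\N\rangle$ of $c_0/D$, where $D=\langle e_n:n\in\N\rangle\subset c_0$. UFSS is inherited from the normed space $c_0$ via Proposition~\ref{perm_prop_3}(b),(c); non-discreteness is immediate since $p^{-m_n}e_n\to 0$ in $c_0$; and local quasi-convexity is inherited from the ambient group $c_0/D$. The only genuine work is showing that $c_0/D$ itself is locally quasi-convex, and the paper does this by identifying $(c_0/D)^\wedge$ with the eventually-zero integer sequences $\Z_0^{\N}$ (using the classical duality $c_0'=\ell^1$), then computing $\varphi(\tfrac{1}{4m}B)^{\triangleright}=\{(k_n):\sum|k_n|\le m\}$ and checking that the bipolar returns $\varphi(\tfrac{1}{4m}B)$.

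Your direct approach on $G$ could be made to work, but taking $\varepsilon_n\to 0$ buys nothing and creates two self-inflicted problems. First, your ``finite elementary-number-theory computation'' is the assertion that the symmetric interval of radius $\varepsilon_{n_0}$ in the cyclic group $\Z(p^{m_{n_0}})\subset\T$ is quasi-convex in that cyclic group; this is true but not a one-liner for arbitrary $\varepsilon_{n_0}$, and you never carry it out. Second, for non-discreteness you need, for every $k$, some $n$ with $p^{-m_n}\le \varepsilon_n/k$; you never fix $\varepsilon_n$ nor check this is compatible with whatever the separation argument would require. If instead you take the constant choice $\varepsilon_n=1/4$ (which is precisely the restriction to $G$ of the paper's $\varphi(\tfrac14 B)$), both difficulties evaporate: for $x\notin U$ the separating character is simply the $n_0$-th coordinate projection $\chi_{n_0,1}$ (since $\chi_{n_0,1}(U)\subseteq\T_+$ tautologically and $\chi_{n_0,1}(x)=\iota_{n_0}(x_{n_0})\notin\T_+$), its $\tau_U$-continuity follows from $\chi_{n_0,1}((1/k)U)\subseteq (1/k)\T_+$, and non-discreteness holds because the order-$p$ element in the $n$-th coordinate has $|\iota_n(\cdot)|=p^{-m_n}\le 1/(4k)$ for all large $n$. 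So the gap is real but is most easily closed by simplifying your construction, not by performing the computation you postponed.
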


\begin{proof} Let us take the quotient of 
 the Banach space of null sequences $c_0$ with respect to the subgroup $D:=\langle e_n:\ n\in\N\rangle$ (where $e_n$ is the $n$-th unit
vector). Recall that $c_0:=\{(x_n)\in\R^{\N}:\ x_n\to 0\}$ is endowed  with the topology induced by the norm
$\|(x_n)\|_\infty:=\sup\{|x_n|:\ n\in\N\}$. In particular, $D$ is  discrete.  According to Proposition \ref{perm_prop_3}(b) and (c), every subgroup of  $c_0/D$ is
 UFSS and hence locally minimal. This holds in particular  for $G:=\langle p^{-m_n}e_n\rangle/D$. Since $p^{-m_n}e_n\to 0$ in
 $c_0$, the group $G$ is not discrete. Furthermore, $G$ is algebraically  isomorphic to $\bigoplus_{n\in\N}\Z(p^{m_n})$.

 Since subgroups of locally quasi-convex groups are locally  quasi-convex,  it is sufficient to show that $c_0/D$ is locally  quasi-convex.

Let $B$ be the closed unit ball of $c_0,$ and $\varphi:c_0\to c_0/D$ be the canonical epimorphism. We shall show that for every $m\in {\mathbb N}$, 
 $\varphi(\frac{1}{4m}B)$ coincides with its quasi-convex hull  $Q(\varphi(\frac{1}{4m}B))$.

Note that the dual of $c_0/D$ can be algebraically identified with the group ${\mathbb Z}_0^{\mathbb N}$ of all eventually zero integer sequences. Indeed, for any continuous character $\chi$ of $c_0/D,$ we have $\chi\circ \varphi \in c_0^{\wedge};$ according to \cite{Smi}, we may identify the dual group of $c_0$
with the dual space $c_0'$, and it is a well known fact that
$c_0'\to \ell^1,\ \varphi\mapsto (\varphi(e_n))_{n\in\N}$ is an (algebraic) isomorphism. (Recall that $\ell^1$ is the sequence space
of all absolutely summable sequences.) Hence, there exists $(\lambda_n)\in \ell^1$ such that
$\chi(\varphi(x))=q(\sum_{n=1}^{\infty}\lambda_n x_n)$ for every $x\in c_0;$ in particular $q(0)=\chi(\varphi(e_n))=q(\lambda_n)$ for
every $n\in {\mathbb N}$ and we deduce $(\lambda_n)\in {\mathbb Z}^{\mathbb N};$ since this sequence is in $\ell^1,$ it must be eventually null.

Fix $m\in {\mathbb N}.$ We will show that $(\varphi(\frac{1}{4m}B))^{\triangleright}=\{(k_n)\in {\mathbb Z}_0^{\mathbb N} :\sum|k_n|\le
m\}$. 
The inclusion $ ``\supseteq"{}$ is easy. Suppose now that $(k_n)\in {\mathbb Z}_0^{\mathbb N}$ is such that $\sum|k_n|> m;$ we will find $x\in
\frac{1}{4m}B$ with $q(\sum k_n x_n)\not \in q([-1/4,1/4]).$ For this, fix some $\delta \in {\mathbb R}$ with $1/4<\delta\le \min \{
\frac{1}{4m}\sum |k_n|,\,1/2\}. $ Put $$x_n=\frac{\delta\, {\rm sgn}(k_n)}{\sum|k_n|},\;n\in {\mathbb N}$$ It is clear that
$x=(x_n)\in \frac{1}{4m}B.$ On the other hand, $\sum k_n x_n=\delta\not \in [-1/4,1/4]+{\mathbb Z}.$

Now let us show that $\{(k_n):\sum|k_n|\le m\}^{\triangleleft}=\varphi(\frac{1}{4m}B).$ Again  the inclusion ``$\supseteq$"{}
is easy. For the reverse inclusion, fix $x\in c_0$ with $\varphi(x)\in \{(k_n):\sum|k_n|\le m\}^{\triangleleft}$; we need to
prove that $\varphi(x) \in \varphi(\frac{1}{4m}B).$ For every $k\in \{1,2,\cdots, m\}$ and every $j\in {\mathbb N}$ we have $ke_j \in
\{(k_n):\sum|k_n|\le m\}.$ Hence $kx_j\in [-1/4,1/4]+{\mathbb Z}$ for every such $k$ and $j.$ We deduce $x_j \in
[-\frac{1}{4m},\frac{1}{4m}]$ for every $j\in {\mathbb N}.$
\end{proof}

\begin{lemma} \label{chunk}
Let $G$ be an unbounded abelian group. Then $G$ contains a subgroup $H$ of one of the following forms:
\begin{itemize}
 \item[(i)] $H=\Z$,
 \item[(ii)] $H = \bigoplus_n\Z(p_n)$, where $p_n$ is an infinite set of  distinct primes;
 \item[(iii)] $H=\Z(p^\infty)$ for some prime $p$.
 \item[(iv)] $H = \bigoplus_n\Z(p^n)$ for some prime $p$.
\end{itemize}
\end{lemma}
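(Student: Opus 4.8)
The plan is to argue by cases according to the torsion structure of $G$, using the standard structure theory of abelian groups (as in \cite{Fuc}). First I would split into the case where $G$ has an element of infinite order and the case where $G$ is torsion. If some $x\in G$ has infinite order, then $\hull{x}\cong\Z$ and we are in case (i). So assume from now on that $G$ is torsion, hence $G=\bigoplus_{p\in\Prm}G_p$ where $G_p$ is the $p$-primary component. Since $G$ is unbounded, at least one of the following holds: either infinitely many of the components $G_p$ are nontrivial, or some single component $G_p$ is unbounded.

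In the first subcase, pick an infinite set of distinct primes $\{p_n:n\in\N\}$ with $G_{p_n}\ne\{0\}$, and for each $n$ choose an element $x_n\in G_{p_n}$ of order exactly $p_n$ (possible since a nontrivial $p$-group has an element of order $p$). Since the $x_n$ lie in distinct primary components, the subgroup they generate is their direct sum, so $H:=\hull{x_n:n\in\N}\cong\bigoplus_n\Z(p_n)$, which is case (ii). In the second subcase we have reduced to a single unbounded abelian $p$-group $P:=G_p$. Here I would invoke the classical dichotomy for $p$-groups: either $P$ has a nontrivial divisible subgroup, or $P$ is reduced. If $P$ has a nontrivial divisible subgroup, that subgroup is a direct sum of copies of $\Z(p^\infty)$, so $P$ contains a copy of $\Z(p^\infty)$, giving case (iii). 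If $P$ is reduced (and unbounded), then by Kulikov's theorem (or directly: a reduced $p$-group which is bounded would force $P[p^k]=P$ for some $k$) the orders of elements of $P$ are unbounded, so one can choose elements $y_n\in P$ of order $p^{k_n}$ with $k_n$ strictly increasing; a routine independence argument — choosing the $y_n$ successively so that $\hull{y_n}$ meets the span of the previous ones trivially, which is possible because $P/\hull{y_1,\dots,y_{n-1}}$ is still unbounded — produces a subgroup $H=\bigoplus_n\hull{y_n}\cong\bigoplus_n\Z(p^{k_n})$. Passing to a subgroup of each $\Z(p^{k_n})$ if desired, or relabeling, this contains a copy of $\bigoplus_n\Z(p^n)$, which is case (iv).

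The only genuinely delicate point is the last one: extracting an \emph{independent} family $\{y_n\}$ of elements of unboundedly growing order from a reduced unbounded $p$-group. The naive choice of elements of large order need not be independent, so the argument must be inductive, at each stage replacing $P$ by the quotient by the span of the elements already chosen and checking that this quotient is still unbounded — this uses that a finitely generated (hence bounded) subgroup cannot make an unbounded group bounded upon quotienting. Everything else is a direct appeal to the primary decomposition and the divisible–reduced splitting, so I would keep those steps brief and concentrate the write-up on this independence construction.
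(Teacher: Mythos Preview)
Your case analysis and the treatment of cases (i)--(iii) are correct and coincide with the paper's proof. The divergence is entirely in case (iv), and there your sketch has a genuine gap.

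You claim that one can choose $y_n$ with $\hull{y_n}\cap\hull{y_1,\dots,y_{n-1}}=\{0\}$ ``because $P/\hull{y_1,\dots,y_{n-1}}$ is still unbounded.'' But unboundedness of the quotient only gives you a coset $\bar y$ of large order $p^m$ there; a lift $y\in P$ then satisfies $p^m y\in F:=\hull{y_1,\dots,y_{n-1}}$, and typically $p^m y\neq 0$, so $\hull{y}\cap F=\hull{p^m y}\neq\{0\}$. Replacing $y$ by $y+f$ with $f\in F$ does not help once $m$ exceeds the exponent of $F$, since then $p^m f=0$. So the stated reason does not produce independence, and it is not clear how to repair the induction without importing more structure theory. (Note that one really uses reducedness here: in $\Z(p^\infty)$ every two nonzero cyclic subgroups meet.)

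The paper avoids this difficulty by invoking a basic subgroup $B\le P$ in the sense of \cite[32.2]{Fuc}: $B$ is pure, a direct sum of cyclic $p$-groups, and $P/B$ is divisible. A short argument (purity plus reducedness of $P$) shows that $B$ must be unbounded, and an unbounded direct sum of cyclic $p$-groups obviously contains $\bigoplus_n\Z(p^n)$. This is the standard route and is what you should use; if you insist on an elementary construction, you will essentially be reproving the existence of a basic subgroup.
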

\begin{proof} If $G$ is not a torsion group, it has a subgroup isomorphic to $\Z$ (case (i)); otherwise it is a torsion group and hence direct sum of its $p$-primary groups,
 i.e. $G=\bigoplus G_p$. So if for infinitely many $p$ the group $G_p$ is  not trivial, then case (ii) applies. Next we may assume that $G=G_p$.
 If $G$ is not reduced, then it has a subgroup isomorphic to $\Z (p^\infty)$  (case (iii)).

If $G=G_p$ is an infinite reduced abelian group, then fix a basic subgroup $B$ of $G$, i.e.,  $B$ is a pure subgroup  of $G$ (this means that $p^nB = B\cap p^n G$ for every natural $n$) that is a  direct sum of cyclic subgroups and $G/B$ is divisible \cite[32.2]{Fuc}.  It suffices to  see that $B$ is unbounded; then clearly $B$ will   contain a subgroup $H$ as in (iv). Assume  for a contradiction that $p^nB=0$.  Then divisibility of $G/B$ yields $p^nG+B=G$. Since $B\cap p^n G= p^nB = 0$, this sum is direct. Then $p^nG\cong G/B$ is
divisible, hence  zero as $G$ is reduced. So $G=B$ is bounded, a contradiction.
\end{proof}
\begin{theorem}\label{Ex_Lydia}  Let $G$ be an abelian group. Then the following assertions are equivalent:
\begin{itemize}
 \item[(a)] no subgroup of $G$ of size $<\cont$ admits a non-discrete locally  minimal and locally GTG group topology;
 \item[(b)] no countable subgroup of $G$ admits a non-discrete  locally quasi-convex UFSS group topology;
 \item[(c)] $G$ is bounded.
\end{itemize}
\end{theorem}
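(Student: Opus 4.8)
The plan is to prove Theorem~\ref{Ex_Lydia} by establishing the cycle of implications $(c)\Rightarrow(a)\Rightarrow(b)\Rightarrow(c)$, exploiting the two lemmas that precede it. The implication $(c)\Rightarrow(a)$ is the purely topological one: if $G$ is bounded, then so is every subgroup $G'$ of $G$, say $mG'=0$. Suppose $G'$ carried a non-discrete locally minimal and locally GTG group topology; then $G'$ is locally minimal with respect to some GTG neighborhood $U$ of $0$ (shrinking a witness $V$ of local minimality inside a GTG set, which is legitimate since smaller neighborhoods still witness local minimality, and a symmetric neighborhood contained in a GTG set is GTG by \cite[Proposition 4.4]{LocMin}). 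Now Proposition~\ref{minsubgr}(b2) applies: $(1/m)U=U_\infty$ is an \emph{open} minimal subgroup of $G'$. An open subgroup of a non-discrete group is non-discrete, and an open minimal subgroup would then be a non-discrete minimal group of finite exponent --- but minimal abelian groups are precompact, and a precompact group of finite exponent which is non-discrete contains, in its compact completion $\cong\Z(p)^\kappa$-type factors, copies of $\Z(p)^\omega$, contradicting essentiality/minimality; more directly, a non-discrete minimal bounded abelian group does not exist because by Corollary~\ref{newtor} (or \cite[(4.3.2)]{DPS}) such a group, being dense and essential in its compact completion $K$ of the same exponent, would force $Soc$-type obstructions. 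I will phrase this last step by invoking that a bounded minimal abelian group is discrete, which is classical (essentially \cite[(4.3.2)]{DPS}): hence $G'$ is discrete, a contradiction.

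The implication $(a)\Rightarrow(b)$ is trivial: a countable group has size $<\cont$, and a locally quasi-convex UFSS group is locally minimal (UFSS groups are locally minimal, as recalled after the definition) and locally GTG (Proposition~\ref{Bemerkung}(d)), so the negation of (b) yields the negation of (a).

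The implication $(b)\Rightarrow(c)$ is the substantive one, and here the contrapositive is proved using Lemma~\ref{chunk} and Lemma~\ref{ex_c_0}. If $G$ is unbounded, Lemma~\ref{chunk} produces a subgroup $H$ of $G$ of one of the four listed forms, each of which is countable. In cases (i) $H=\Z$, (iii) $H=\Z(p^\infty)$, and (iv) $H=\bigoplus_n\Z(p^n)$, Lemma~\ref{ex_c_0} directly supplies a non-discrete locally quasi-convex UFSS topology on $H$: indeed $\Z(p^n)$ with $m_n=n$ is exactly case (iv); for $\Z(p^\infty)$ and $\Z$ one realizes them (or a suitable infinite subgroup, which suffices since subgroups of UFSS groups are UFSS and a dense subgroup of a non-discrete group is non-discrete) inside $c_0/D$ by the same quotient construction --- for $\Z(p^\infty)=\langle p^{-n}e_1+D:n\in\N\rangle$ type embeddings, or for $\Z$ by taking the subgroup generated by a single non-torsion element $p^{-1}e_1+D$ together with translates, or more simply by noting that any unbounded group of the listed types embeds a copy of some $\bigoplus_n\Z(p^{m_n})$ once we pass to case (iv), while (ii) $H=\bigoplus_n\Z(p_n)$ with distinct primes must be handled separately. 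For case (ii), the analogous construction works: take $c_0/D$ as before but now use $G:=\langle p_n^{-1}e_n\rangle/D\cong\bigoplus_n\Z(p_n)$; since $p_n^{-1}e_n\to 0$ in $c_0$ the quotient topology is non-discrete, and the identical quasi-convexity computation (the dual is $\Z_0^{\N}$, the polars are controlled by $\ell^1$-sequences) shows $c_0/D$ is locally quasi-convex, so $G$ is a non-discrete locally quasi-convex UFSS countable subgroup of $G$, contradicting (b).

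The main obstacle I anticipate is case (ii) of Lemma~\ref{chunk}: Lemma~\ref{ex_c_0} is stated only for a single prime $p$, so I must verify that the $c_0/D$ construction --- or an easy variant of it --- still yields a non-discrete locally quasi-convex UFSS topology on $\bigoplus_n\Z(p_n)$ when the orders of the generators are pairwise coprime. The computation of $(\varphi(\tfrac{1}{4m}B))^{\triangleright}$ and its inverse polar does not actually use that the denominators are powers of a fixed prime; what matters is only that $p_n^{-1}e_n\to 0$, which holds since $p_n\to\infty$. So the same proof goes through verbatim with $p^{m_n}$ replaced by $p_n$. Alternatively, and perhaps more cleanly for the writeup, I would state and use a slightly more general form of Lemma~\ref{ex_c_0}: \emph{for any sequence $(d_n)$ of integers $\ge 2$ with $d_n\to\infty$, the group $\bigoplus_n\Z(d_n)$ admits a non-discrete locally quasi-convex UFSS topology}, whose proof is word-for-word identical. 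With that in hand all four cases of Lemma~\ref{chunk} are covered uniformly (case (i) by embedding $\Z\hookrightarrow$ such a group is not quite right since $\Z$ is torsion-free, so for (i) one instead notes $\Z$ itself sits in $c_0/D$ via $\langle xe_1+D\rangle$ for irrational $x$, or simply that the usual $p$-adic topology on $\Z$ is a non-discrete locally quasi-convex metrizable topology which is UFSS being linear and NSS---actually the $p$-adic topology is \emph{not} NSS, so here one genuinely uses the $c_0/D$ embedding or the topology of $\Z$ as a subgroup of $\Z(p^\infty)$-type construction), completing the contrapositive of $(b)\Rightarrow(c)$.
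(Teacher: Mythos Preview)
Your implication $(c)\Rightarrow(a)$ contains a genuine error. You assert that ``a bounded minimal abelian group is discrete'' (equivalently, that ``a non-discrete minimal bounded abelian group does not exist''), but this is false: for any prime $p$ the compact group $\Z(p)^\omega$ is minimal, has exponent $p$, and is non-discrete. The hand-waving about $Soc$-type obstructions or essentiality does not rescue this, since a compact group is automatically essential in itself. The point you are missing is that the hypothesis in (a) restricts the \emph{size} of the subgroup to be $<\cont$, and you never use this. The paper's argument does: once Proposition~\ref{minsubgr}(b2) yields an open minimal bounded subgroup $H$, one invokes the fact (\cite[Corollary 5.1.5]{DPS}) that a minimal abelian group of size $<\cont$ has $r_p(H)<\infty$ for every prime $p$; combined with boundedness this forces $H$ to be finite, hence $\tau$ discrete. (There is also a small slip earlier: a symmetric neighborhood contained in a GTG set need not be GTG; the correct move is simply to choose, from the locally GTG basis, a GTG neighborhood inside a witness of local minimality.)

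For $(b)\Rightarrow(c)$ your strategy can be made to work, but it is considerably more laborious than the paper's. The paper handles cases (i)--(iii) of Lemma~\ref{chunk} uniformly by observing that $\Z$, $\bigoplus_n\Z(p_n)$ and $\Z(p^\infty)$ each embed as dense subgroups of $\T$ (via an irrational generator, via the subgroup $\langle 1/p_n:n\in\N\rangle\subseteq\Q/\Z$, and via the $p$-torsion of $\T$, respectively), so they inherit a non-discrete locally quasi-convex UFSS topology from $\T$; only case (iv) requires the $c_0/D$ construction of Lemma~\ref{ex_c_0}. Your proposal to generalize Lemma~\ref{ex_c_0} to arbitrary sequences $(d_n)\to\infty$ is correct and does cover cases (ii)--(iv), but your treatment of case (i) remains tangled (the $p$-adic topology on $\Z$ is indeed not NSS, and $\Z$ is not a subgroup of any $\bigoplus_n\Z(d_n)$), whereas the embedding $\Z\hookrightarrow\T$ settles it in one line.
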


\begin{proof} The implication (a) $\Rightarrow$ (b)    is obvious as UFSS groups are locally minimal and  locally GTG.

(b)  $\Rightarrow $ (c): Suppose that $G$ is not bounded. Let us show that there exists a non-discrete locally quasi-convex UFSS group  topology on any subgroup $H$ of each of the forms (i) to (iv) in Lemma \ref{chunk}.

In cases (i) - (iii) the group $H$  can be embedded (algebraically) in $\T$, in such a way that $H$ is dense in ${\mathbb T}$. Since
endowed with the usual topology, $\T$ is a UFSS group, $H$ can be endowed with a UFSS group topology according to Proposition \ref{perm_prop_3}(b), which is not discrete. In case (iv), this is a consequence of Lemma \ref{ex_c_0}.

For the proof of (c) $\Rightarrow$ (a), let $G_0$ be a subgroup of $G$ of size $<\cont$. We can assume without loss of generality that $G_0$ is infinite. Let
$\tau$ be a locally GTG  and  locally minimal group topology on $G_0$. According to Proposition \ref{minsubgr}(b2), $G_0$ has an open, minimal subgroup $H$, which is also bounded.

Let us see first that $H$ is finite. Indeed, $H$ is a minimal abelian group of  size $<\cont$. Hence $r_p(H)<\infty$ for all primes $p$
(see \cite[Corollary 5.1.5]{DPS}). Since $H$ is  a bounded abelian group we conclude that  $H$ is finite. Since $H$ is open,  $\tau$ is discrete.
\end{proof}

\begin{corollary} Every unbounded abelian group admits a non-discrete locally quasi-convex UFSS group topology.
\end{corollary}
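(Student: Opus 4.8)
The plan is to deduce this from Theorem \ref{Ex_Lydia} together with a routine ``open subgroup'' extension. Since $G$ is unbounded, condition (c) of Theorem \ref{Ex_Lydia} fails, hence so does (b); thus $G$ contains a (countable) subgroup $H$ carrying a non-discrete locally quasi-convex UFSS group topology $\tau_H$. It remains to transport $\tau_H$ from $H$ to all of $G$.

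First I would equip $G$ with the group topology $\tau$ for which $H$ is an open subgroup carrying $\tau_H$, i.e. take $\{W : W \in {\cal V}_{\tau_H}(0)\}$ as a basis of $\tau$-neighborhoods of $0$. This is a Hausdorff group topology on $G$ (as $H$ is a subgroup and $\tau_H$ is Hausdorff), and it is non-discrete since $\{0\}$ is not $\tau_H$-open while $H$ is $\tau$-open. Moreover $(G,\tau)$ is UFSS: if $U$ is a distinguished symmetric neighborhood of $0$ for $\tau_H$, then $U\subseteq H$, so any $x\in G\setminus H$ satisfies $x=1\cdot x\notin U$; hence the set $(1/n)U$ computed in $G$ coincides with the one computed in $H$, and the $(1/n)U$ form a basis of $\tau$-neighborhoods of $0$. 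Thus $U$ is a distinguished neighborhood for $(G,\tau)$.

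The only point that requires an argument is local quasi-convexity of $(G,\tau)$: one cannot merely invoke local isomorphism with $H$, since (as noted in Subsection 5.1) local quasi-convexity is not preserved by local isomorphisms; instead one uses that $H$ is an \emph{open} subgroup. Let $U$ be a $\tau_H$-neighborhood of $0$ that is quasi-convex in $H$; such sets form a basis at $0$, so it suffices to check that $U$ is quasi-convex in $(G,\tau)$. Take $x\in G\setminus U$. If $x\in H$, pick $\psi\in H^\wedge$ with $\psi(U)\subseteq\T_+$ and $\psi(x)\notin\T_+$, and extend $\psi$ to a character $\chi$ of $G$ (possible since $\T$ is divisible, the extension being continuous because $H$ is open); then $\chi(U)\subseteq\T_+$ and $\chi(x)\notin\T_+$. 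If $x\notin H$, let $\bar x\neq 0$ be the image of $x$ in the discrete group $G/H$, choose $\eta\in(G/H)^\wedge$ with $\eta(\bar x)\neq 0$ (discrete abelian groups are maximally almost periodic), choose $n\in\N$ with $n\eta(\bar x)\notin\T_+$ (every nonzero element of $\T$ has a positive multiple outside $\T_+$), and set $\chi=(n\eta)\circ\pi$ with $\pi:G\to G/H$ the quotient map; then $\chi|_H=0$, so $\chi(U)=\{0\}\subseteq\T_+$ and $\chi(x)=n\eta(\bar x)\notin\T_+$. In both cases $x\notin Q(U)$, so $U=Q(U)$. Hence $(G,\tau)$ has a basis at $0$ of quasi-convex neighborhoods and is locally quasi-convex, which completes the proof.

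The main (and essentially only) obstacle is the character-extension/coset-separation step in the last paragraph; everything else is bookkeeping. Alternatively, one could first isolate as a lemma the fact that a topological abelian group possessing a locally quasi-convex open subgroup is itself locally quasi-convex, after which the corollary is immediate from Theorem \ref{Ex_Lydia}.
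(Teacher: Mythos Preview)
Your proof is correct and follows exactly the same approach as the paper: apply Theorem~\ref{Ex_Lydia} to obtain a countable subgroup $H$ with a non-discrete locally quasi-convex UFSS topology, then declare $H$ open in $G$. The paper states the extension step in a single sentence without justification, whereas you carry out the verification of UFSS and local quasi-convexity in detail; your character-extension argument for local quasi-convexity of an open-subgroup extension is the standard one and is fine.
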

\begin{proof} Let $G$ be an unbounded abelian group. By Theorem \ref{Ex_Lydia}, some subgroup $H\le G$ admits a non-discrete locally quasi-convex UFSS group topology. This topology can be extended to a non-discrete locally quasi-convex UFSS group  topology on $G$ by declaring $H$ to be open in $G$.
\end{proof}
\begin{remark} In \cite[Corollary 4.25]{LocMin} it was proved, using completely different methods, that an abelian group is unbounded iff it admits a
non-discrete UFSS group topology.
\end{remark}

\section{Almost minimal groups}

\subsection{The notion of almost minimal group}

Our aim is to replace local minimality by a stronger property that still covers local compactness, UFSS and minimality (in the abelian case), but goes closer to them in
the following natural sense:

\begin{definition} A topological group $G$ is called {\em almost minimal} if it has a closed, minimal normal subgroup $N$ such that the quotient group $G/N$ is UFSS.
\end{definition}

\begin{remark}   Note that, with the above notations, $N$ is a $G_\delta$-set in $G$ (since $N$ is the inverse image
of the neutral element in the metrizable group $G/N$).

Moreover, if $N$ is also abelian then the Prodanov-Stoyanov Theorem implies that $N$ is precompact. \end{remark}

\begin{example}
Minimal groups, as well as UFSS groups, are almost minimal.
\end{example}

\begin{lemma}({\bf Merzon's Lemma}, \cite[Lemma 7.2.3]{DPS})\label{Merzon}
Let $H$ be a subgroup of a group $G$ and let $\tau$, $\sigma$ be  two (not necessarily Hausdorff) group topologies on $G$ such that
$\tau\leq\sigma$, $\tau_{|H}=\sigma_{|H} $, and  the topologies on the quotient group $G/H$  satisfy $\ol{\tau} =\ol{\sigma}$. Then $\tau=\sigma$. \end{lemma}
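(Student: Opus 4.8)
The statement to prove is that if $\tau\le\sigma$ are group topologies on $G$ which agree on a subgroup $H$ and induce the same quotient topology on $G/H$, then $\tau=\sigma$. Since $\tau\le\sigma$ already, it suffices to show that every $\sigma$-neighborhood of $0$ contains a $\tau$-neighborhood of $0$; equivalently, that every $\sigma$-open $V\in\vcal_\sigma(0)$ is a $\tau$-neighborhood of $0$. The plan is to take such a $V$, shrink it using continuity of subtraction in $(G,\sigma)$ to get a symmetric $\sigma$-neighborhood $W$ with $W+W\subseteq V$, and then build a $\tau$-neighborhood of $0$ inside $V$ by combining a piece coming from the hypothesis on $H$ with a piece coming from the hypothesis on $G/H$.

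The key step is the following ``sandwich'' construction. Let $q:G\to G/H$ be the canonical projection. Because $\ol\tau=\ol\sigma$ on $G/H$ and $q(W)$ is a $\ol\sigma$-neighborhood of $0$ in $G/H$ (note $q$ is open for both quotient topologies), the set $q(W)$ is also a $\ol\tau$-neighborhood of $0$; hence there is a $\tau$-open set $O\in\vcal_\tau(0)$ with $q(O)\subseteq q(W)$, i.e. $O\subseteq W+H$. On the other hand, $W\cap H$ is a $\sigma_{|H}$-neighborhood of $0$ in $H$, hence a $\tau_{|H}$-neighborhood of $0$; so there is a $\tau$-open set $O'\in\vcal_\tau(0)$ with $O'\cap H\subseteq W$. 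I then claim $O\cap O'\subseteq V$, which finishes the proof since $O\cap O'\in\vcal_\tau(0)$. Indeed, take $x\in O\cap O'$. From $x\in O\subseteq W+H$ write $x=w+h$ with $w\in W$, $h\in H$; then $h=x-w\in O'-W$. Here I want to conclude $h\in W$: using symmetry of $W$ and $O'\subseteq$ (a suitable shrinking), one arranges $O'-W\subseteq$ a $\tau$-neighborhood meeting $H$ inside $W$ — more precisely, I should choose $O'$ at the outset so that $O'\cap H\subseteq W$ and also replace $W$ above by a smaller symmetric $\sigma$-neighborhood $W_1$ with $W_1+W_1+W_1\subseteq V$, and take $O'$ with $(O'-W_1)\cap H\subseteq W_1$ (possible since $-W_1$ is a fixed $\sigma$-, hence the relevant translated neighborhoods are controlled on $H$ via $\tau_{|H}=\sigma_{|H}$). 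Then $h\in W_1$, so $x=w+h\in W_1+W_1\subseteq V$.

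\textbf{Main obstacle.} The delicate point is exactly the bookkeeping in that last paragraph: one must pick the auxiliary $\tau$-neighborhoods $O$ and $O'$ in the right order and with enough room (two or three applications of continuity of subtraction) so that the decomposition $x=w+h$ forces $h$ back into the chosen neighborhood of $0$ in $H$. The hypotheses are used in a genuinely intertwined way — $\tau_{|H}=\sigma_{|H}$ to pull $W\cap H$ (and translates) back to $\tau$, and $\ol\tau=\ol\sigma$ to pull $q(W)$ back to $\tau$ — and the only real content is verifying that the intersection of the two resulting $\tau$-neighborhoods lands inside $V$. Once the neighborhood sizes are fixed correctly this is a short computation; getting the order of quantifiers right is the whole game, and there are no deeper difficulties since $\tau\le\sigma$ is given, so only one inclusion of topologies needs to be established.
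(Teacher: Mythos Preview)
The paper does not supply a proof of Merzon's Lemma; it merely cites \cite[Lemma 7.2.3]{DPS}. So there is no ``paper's own proof'' to compare against, and I will comment on your argument directly.

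Your overall strategy is the right one, and you have correctly identified the only nontrivial point: after decomposing $x=w+h$ with $w\in W_1$ and $h\in H$, one must force $h$ back into a controlled neighborhood. However, the specific fix you propose --- choosing a $\tau$-neighborhood $O'$ with $(O'-W_1)\cap H\subseteq W_1$ --- is not justified by the sentence you give, and in fact I do not see how to produce such an $O'$ from the hypotheses alone: the set $(H\setminus W_1)+W_1$ is $\sigma$-open, so its complement is only $\sigma$-closed, and there is no reason it should be a $\tau$-neighborhood of $0$.

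The clean way to close the gap is to use the hypothesis $\tau\le\sigma$ \emph{inside} the argument rather than only at the end. Choose a symmetric $\sigma$-neighborhood $W$ with $W+W\subseteq V$. Using $\tau|_H=\sigma|_H$, pick a symmetric $\tau$-open $U$ with $U\cap H\subseteq W$, and then a symmetric $\tau$-open $U_0$ with $U_0+U_0\subseteq U$. Since $\tau\le\sigma$, the set $U_0$ is also $\sigma$-open, so $U_0\cap W$ is a $\sigma$-neighborhood of $0$ and $q(U_0\cap W)$ is a $\ol\sigma=\ol\tau$-neighborhood; pick $\tau$-open $U'\subseteq U_0$ with $q(U')\subseteq q(U_0\cap W)$. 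For $x\in U'$ write $x=y+h$ with $y\in U_0\cap W$ and $h\in H$; then $h=x-y\in U_0+U_0\subseteq U$, so $h\in U\cap H\subseteq W$, and $x=y+h\in W+W\subseteq V$. The point you were missing is to intersect with the $\tau$-neighborhood \emph{before} applying the quotient hypothesis, so that both pieces of the decomposition automatically lie in $U_0$.
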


Almost minimal groups are locally minimal. In fact

\begin{theorem}\label{char_alm_min} For a topological abelian group $G$ the following conditions are equivalent:
\begin{itemize}
\item[(a)] $G$ is almost minimal.
\item[(b)] There exists a GTG neighborhood of zero $U$ such that $G$ is $U$-locally minimal and $G/U_{\infty}$ is UFSS.
\item[(c)] There exists a GTG neighborhood of zero $U$ such that $G$ is $U$-locally minimal and for every $V\in {\mathcal V}(0)$ there
exist $n\in {\mathbb N}$ and a finite $F\subseteq U_{\infty}$ with $(1/n)U\subseteq F+V.$
\end{itemize}
\end{theorem}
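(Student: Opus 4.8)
The plan is to prove the cycle of implications (a) $\Rightarrow$ (b) $\Rightarrow$ (c) $\Rightarrow$ (a), using the structural results already available. For (a) $\Rightarrow$ (b), suppose $N$ is a closed minimal normal subgroup of $G$ with $G/N$ UFSS. Let $q:G\to G/N$ be the quotient map and let $W$ be a distinguished neighborhood of $0$ in $G/N$; I would set $U$ to be a suitable symmetric GTG neighborhood of $0$ in $G$ with $q(U)\subseteq W$ and with $N+U'\subseteq U$ for some neighborhood $U'$ of $0$ (using that $N$ is $G_\delta$ and the regularity of the topology). By Proposition \ref{minsubgr}(a), such a $U$ (chosen so that $N+V\subseteq U$) makes $N$ minimal — but here we already know $N$ is minimal; the point is rather to arrange $U_\infty = N$. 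Since $N$ is minimal abelian it is precompact, hence $N$ is linear on no account, but the inclusion $N\subseteq U_\infty$ should follow from $N+$(small)$\subseteq U$ (so $nN = N \subseteq U$ for all $n$), while the reverse inclusion $U_\infty\subseteq N$ needs $q(U_\infty)$ to be a subgroup of $G/N$ inside a distinguished neighborhood of an NSS group, hence trivial; for that one wants $U$ to be a GTG set so that $U_\infty$ is a subgroup (Proposition \ref{minsubgr}(b1)) and $q(U_\infty)\subseteq W_\infty$. Then $G/U_\infty = G/N$ is UFSS, and $U$-local minimality of $G$ follows from $V$-local minimality of the minimal group $N$ together with the UFSS (hence locally minimal) quotient $G/N$ via Merzon's Lemma (Lemma \ref{Merzon}): any coarser Hausdorff group topology for which $U$ stays a neighborhood agrees with $\tau$ on $N$ (by minimality of $N$, once one checks the induced topology on $N$ is Hausdorff and coarser) and induces on $G/N$ a topology for which $q(U)\supseteq W$-ish piece survives, hence agrees there too.

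For (b) $\Rightarrow$ (c): assume $U$ is a GTG neighborhood with $G$ $U$-locally minimal and $G/U_\infty$ UFSS. Write $N=U_\infty$, which is a subgroup by Proposition \ref{minsubgr}(b1), and let $q:G\to G/N$ be the projection. Since $G/N$ is UFSS, the images $q((1/n)U)$ — or at least the sets $(1/n)q(U)$ — form a neighborhood basis at $0$ in $G/N$; here one uses that $q$ is open and that $q((1/n)U)$ relates to $(1/n)(q(U))$ appropriately (this needs a small lemma: $q((1/n)U)\subseteq (1/n)q(U)$ always, and a reverse-type containment up to shrinking, using that $U$ is GTG). Given $V\in\mathcal V(0)$, pick $V'$ with $V'+V'\subseteq V$ and $N\subseteq V'$ (possible since $N=U_\infty\subseteq U$ is small — actually one should be careful: $N$ need not be inside every $V$, but $N$ is contained in $U$; so instead take $V$ to be small enough to be inside $U$, or argue that it suffices to prove the statement for a neighborhood basis, and for $V\subseteq U$ one has $N\subseteq U$ — hmm, still $N\not\subseteq V$ in general). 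The correct route: since $G/N$ is UFSS with distinguished $q(U)$, there is $n$ with $(1/n)q(U)\subseteq q(V)$; then $(1/n)U\subseteq q^{-1}(q(V)) = V+N = V+U_\infty$. Now $U_\infty$ may be large, so replace $V$ by $V'$ with $V'+V'\subseteq V$, get $(1/n)U\subseteq V'+U_\infty$, and then cover the relevant ``piece'' of $U_\infty$ that actually meets $(1/n)U+(-V')$ by finitely many points: the set $((1/n)U - V')\cap U_\infty$ is precompact (it lies in $U_\infty$, and $U_\infty$ is a minimal hence precompact abelian subgroup by the Prodanov–Stoyanov theorem, and this set is closed-ish/bounded), so it is covered by $F+(V'\cap U_\infty)$ for a finite $F\subseteq U_\infty$; then $(1/n)U\subseteq F+V'+V'\subseteq F+V$.

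For (c) $\Rightarrow$ (a): set $N=U_\infty$. By Proposition \ref{minsubgr}(b1) $N$ is a minimal (closed normal) subgroup. It remains to show $G/N$ is UFSS, for which the natural candidate distinguished neighborhood is $q(U)$ where $q:G\to G/N$. One must check that $\{(1/n)q(U):n\in\N\}$ is a neighborhood basis at $0$ in $G/N$. That these are neighborhoods is clear from openness of $q$ and Proposition \ref{Bemerkung}-type facts (or directly, $q((1/n)U)$ contains $q(U_{n\text{-dilate}})$...). The content is that they are a basis: given a neighborhood of $0$ in $G/N$, it is $q(V)$-ish for some $V\in\mathcal V(0)$; by hypothesis (c) there are $n$ and finite $F\subseteq U_\infty = \ker q$ with $(1/n)U\subseteq F+V$, hence $q((1/n)U)\subseteq q(V)$, and since $q((1/n)U)\supseteq (1/n')q(U)$ for suitable $n'$ (again using the GTG property of $U$ to relate dilates), we get a basis. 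The main obstacle, and where I would spend the most care, is exactly these back-and-forth comparisons between $(1/n)q(U)$ and $q((1/n)U)$ — i.e. controlling how the operation $U\mapsto (1/n)U$ interacts with the quotient map — together with the precompactness/finite-cover argument in (b) $\Rightarrow$ (c), which is the step that genuinely uses the Prodanov–Stoyanov theorem (minimal abelian $\Rightarrow$ precompact) rather than just formal manipulations. Everything else is bookkeeping with GTG sets, Merzon's Lemma, and the already-established Proposition \ref{minsubgr}.
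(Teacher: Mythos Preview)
Your overall strategy and toolkit are right---Merzon's Lemma for local minimality, Proposition \ref{minsubgr}(b1) for the minimality of $U_\infty$, Prodanov--Stoyanov for precompactness of $U_\infty$ and hence the finite-cover step---and your cycle (a)$\Rightarrow$(b)$\Rightarrow$(c)$\Rightarrow$(a) is close to the paper's (a)$\Rightarrow$(c)$\Rightarrow$(b)$\Rightarrow$(a). But two concrete choices that you leave vague are exactly where the work lies.

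First, in your (a)$\Rightarrow$(b) you write ``set $U$ to be a suitable symmetric GTG neighborhood of $0$ in $G$ with $q(U)\subseteq W$\ldots''. Since $G$ is not assumed locally GTG, the existence of \emph{any} GTG neighborhood needs justification; your appeal to ``$N$ is $G_\delta$ and regularity'' does not produce one. The paper simply takes $U:=\pi^{-1}(W)$: this is automatically GTG (inverse images of GTG sets under homomorphisms are GTG), and one gets the clean identities $(1/n)U=\pi^{-1}((1/n)W)$ and $U_\infty=\pi^{-1}(W_\infty)=\ker\pi=N$ for free. With this choice the Merzon argument goes through exactly as you sketch, and $(1/n)U\subseteq V'+U_\infty$ is immediate from $(1/n)W\subseteq\pi(V')$, after which precompactness of $U_\infty$ gives the finite $F$.

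Second, you correctly flag the comparison between $q((1/n)U)$ and $(1/n)q(U)$ as ``the main obstacle'', but you do not resolve it: in (b)$\Rightarrow$(c) you assume $q(U)$ is distinguished in $G/U_\infty$, and in (c)$\Rightarrow$(a) you assert $q((1/n)U)\supseteq(1/n')q(U)$ for some $n'$---neither is justified from the hypotheses. The paper's device (in its (c)$\Rightarrow$(b)) is to take as candidate distinguished neighborhood $W:=\pi((1/m)U)$, with $m$ from the GTG condition $(1/m)U+(1/m)U\subseteq U$, rather than $\pi(U)$. Then $\pi^{-1}(W)=(1/m)U+U_\infty$, and since $U_\infty\subseteq(1/m)U$ one obtains $(1/n)\pi^{-1}(W)=\pi^{-1}((1/n)W)\subseteq(1/n)U$; combined with $(1/n)U\subseteq U_\infty+V$ this gives $(1/n)W\subseteq\pi(V)$, so $W$ is distinguished. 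That one-line computation using the GTG constant $m$ is the missing ingredient you were gesturing at.
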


\begin{proof}
(a)$\Rightarrow$(c): Let $(G,\tau)$ be an almost minimal topological group with closed minimal subgroup $N$ such that $G/N$ is UFSS with
distinguished neighborhood $W$. This means in particular, that the quotient topology $\ol{\tau}$ coincides with ${\cal T}_{W}$.

Let $\pi:G\to G/N$ denote the canonical projection and let $\sigma$ be a Hausdorff group topology on $G$ coarser than $\tau$ such that $U:=\pi^{-1}(W)   \in{\cal V}_\sigma(0)$. Since inverse images of GTG sets by group homomorphisms are GTG (see \cite[Lemma 4.9(a)]{LocMin}), $U$ is a GTG set. Moreover $U_\infty=\pi^{-1}(W_\infty)=N$.

Since $N$ is minimal, we have $\sigma|_N=\tau|_N$.

We show next that the quotient topologies $\ol{\sigma}$ and $\ol{\tau}$ on  $G/N$ coincide. To this end we note first that $\ol {\sigma}\leq \ol {\tau}$ as a consequence of
$\sigma \leq \tau$ and $\pi(U)=W$ is a $\ol{\sigma}$-neighborhood of  $0_{G/N}$ hence ${\cal T}_{W}\leq\ol{\sigma}\leq \ol{\tau}={\cal T}_W$,  which
shows that $\ol{\sigma}=\ol{\tau}.$  Now we can apply Merzon's Lemma \  to obtain that $\sigma=\tau$ and hence $G$ is $U$-locally minimal.

Let $V$ be a neighborhood of zero in $G$. Fix another zero neighborhood $V'$ such that $V'+V'\subseteq V.$ There exists $n\in {\mathbb N}$ with $(1/n)W\subseteq \pi(V').$ Hence
$$
(1/n)U= (1/n)\pi^{-1}(W)=\pi^{-1}((1/n)W)\subseteq \pi^{-1} \pi(V')=V'+U_{\infty}.
$$
Now, since $U_{\infty}$ is precompact by Prodanov-Stoyanov's theorem, there exists a finite $F\subseteq U_{\infty}$ with
$U_{\infty}\subseteq F+V'$. Thus $(1/n)U\subseteq F+V'+V'\subseteq F+V.$

(c)$\Rightarrow$(b): Fix $U$ as in (c). Let us see that $G/U_{\infty}$ is UFSS. Since $U$ is GTG, there exists $m\in
{\mathbb N}$ with $(1/m)U+(1/m)U\subseteq U.$ We will prove that $W:=\pi((1/m)U)$ is a distinguished neighborhood of zero in
$G/U_{\infty}.$ In order to do this, fix $V\in {\mathcal V}(0).$ By hypothesis there exist $n\in {\mathbb N}$ and a finite $F\subseteq
U_{\infty}$ such that $(1/n)U\subseteq F+V.$ In particular $(1/n)U\subseteq U_{\infty}+V,$ and we deduce
$(1/n)((1/m)U+U_{\infty})\subseteq (1/n)U\subseteq U_{\infty}+V.$
Now, $(1/m)U+U_{\infty}=\pi^{-1}\pi((1/m)U)=\pi^{-1}(W),$ hence
$\pi^{-1}((1/n)W)=(1/n)(\pi^{-1}(W))\subseteq U_{\infty}+V,$ so $(1/n)W\subseteq \pi(U_{\infty}+V)=\pi(V).$

(b)$\Rightarrow$(a): This is an immediate consequence of the fact that if $G$ is a  $U$-locally minimal abelian group where $U$ is a GTG set, then $U_\infty$ is a minimal subgroup (see Proposition \ref{minsubgr}(b1)).
\end{proof}

It is not difficult to show that if $G$ is almost minimal, a convenient $U\in{\mathcal V}(0)$ can be chosen which simultaneously satisfies (b) and (c). However, in order to avoid ambiguities, in what follows we will use the expressions {\em $U$ witnesses almost minimality of $G$} or {\em  $G$ is $U$-almost minimal} as substitutes for condition (b).

\begin{proposition}\label{map_gtg}
Every maximally almost periodic, almost minimal group is locally GTG.
\end{proposition}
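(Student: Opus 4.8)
The plan is to combine the characterization of almost minimality from Theorem~\ref{char_alm_min}(b) with Proposition~\ref{injalmmin}, which already does essentially all the work in the locally minimal setting. Suppose $G$ is maximally almost periodic and almost minimal. By Theorem~\ref{char_alm_min}, almost minimality is witnessed by a GTG neighborhood $U$ of $0$ such that $G$ is $U$-locally minimal and $G/U_\infty$ is UFSS (this is condition (b)). So $G$ is in particular a maximally almost periodic topological abelian group that is locally minimal with respect to the GTG neighborhood $U$. Then Proposition~\ref{injalmmin} applies verbatim: $\tau$ is the supremum of $\sigma(G,G^\wedge)$ and ${\cal T}_U$, and in particular $G$ is locally GTG. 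This would already finish the proof.

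First I would spell out why $U$ being a GTG set lets us invoke Proposition~\ref{injalmmin}: the hypothesis there is exactly ``$G$ is locally minimal with respect to a GTG neighborhood $U$,'' and condition (b) of Theorem~\ref{char_alm_min} hands us precisely such a $U$. The maximal almost periodicity is carried over unchanged. So the one substantive thing to verify is that the passage ``almost minimal $\Rightarrow$ $U$-locally minimal with $U$ a GTG set'' is legitimate — but that is literally the content of (a)$\Rightarrow$(b) in Theorem~\ref{char_alm_min}, which is already proved in the excerpt.

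The conclusion ``$G$ is locally GTG'' then comes out of Proposition~\ref{injalmmin} as the final clause of its statement; alternatively one reads it off from the fact that $\tau = \sigma(G,G^\wedge) \vee {\cal T}_U$ is a supremum of two locally GTG group topologies (the topology $\sigma(G,G^\wedge)$ generated by the characters is locally quasi-convex, hence locally GTG by Proposition~\ref{Bemerkung}(d), and ${\cal T}_U$ is locally GTG since $U$ is GTG), and a supremum of finitely many locally GTG group topologies is locally GTG.

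I do not expect any real obstacle here; the statement is a short corollary whose only role is to package Theorem~\ref{char_alm_min} and Proposition~\ref{injalmmin} together. The only point requiring any care — and it is minor — is making sure one cites condition (b) rather than the definition of almost minimality, so that a genuine GTG neighborhood of $0$ is available to feed into Proposition~\ref{injalmmin}.
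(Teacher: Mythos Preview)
Your proposal is correct and follows exactly the paper's own argument: invoke Theorem~\ref{char_alm_min}(b) to obtain a GTG neighborhood $U$ witnessing $U$-local minimality, then apply Proposition~\ref{injalmmin} to conclude that $G$ is locally GTG. The paper's proof is just the two-line version of what you wrote.
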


\begin{proof} By Theorem \ref{char_alm_min} (b), there exists
a GTG neighbourhood $U$ such that $G$ is $U$-locally minimal. The assertion is now a direct  consequence of Proposition \ref{injalmmin}.
\end{proof}

\begin{proposition} \label{completion_am}
Let $H$ be a dense subgroup of an abelian topological group $G$. If $H$ is almost minimal then $G$ is almost minimal, too.
\end{proposition}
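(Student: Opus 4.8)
The statement to prove is Proposition~\ref{completion_am}: if $H$ is a dense subgroup of an abelian topological group $G$ and $H$ is almost minimal, then $G$ is almost minimal. By Theorem~\ref{char_alm_min}, almost minimality of $H$ is witnessed by a GTG neighbourhood $U_H$ of $0$ in $H$ such that $H$ is $U_H$-locally minimal and $H/(U_H)_\infty$ is UFSS. The first step is to produce a candidate witness in $G$: I would take a closed (in $G$) neighbourhood $W$ of $0$ in $G$ with $W\cap H\subseteq U_H$ (shrinking $U_H$ if necessary so that $W\cap H$ is still GTG and still witnesses almost minimality of $H$; recall any symmetric neighbourhood contained in a GTG/distinguished one inherits the property, and that smaller neighbourhoods still witness local essentiality/minimality). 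The natural candidate for the closed minimal normal subgroup of $G$ is then $N:=(W)_\infty$, or more precisely the closure in $G$ of $(W\cap H)_\infty$; one should check $(W)_\infty\cap H=(W\cap H)_\infty$ using density, and that $(W\cap H)_\infty$ is the minimal subgroup of $H$ supplied by Proposition~\ref{minsubgr}(b1).

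Second step: show $G$ is locally minimal (in fact $W$-locally minimal). This is exactly where Theorem~\ref{crit} does the work: $H$ is locally minimal and dense in $G$, so by the criterion $G$ is locally minimal and $H$ is locally essential in $G$. Third step: show the candidate subgroup $N$ is minimal in $G$. Since $(W\cap H)_\infty$ is a minimal (dense, by construction inside its closure) subgroup of $H$, and it is dense in $N=\overline{(W\cap H)_\infty}^G$, minimality of $N$ would follow from the minimality criterion (Theorem~\ref{Min_Crit}) once we know $(W\cap H)_\infty$ is essential in $N$; alternatively, because $G$ is $W$-locally minimal and $N=W_\infty$ is a closed subgroup contained in $W$, we can invoke Proposition~\ref{minsubgr}(b1) directly for $G$ — but that requires $W$ to be a GTG set in $G$, which is why the reduction must be set up carefully (inverse images / restrictions of GTG sets are GTG by \cite[Lemma 4.9]{LocMin}, but here we need a GTG neighbourhood \emph{of $G$}, so I would instead argue minimality of $N$ via essentiality and density).

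Fourth step: show $G/N$ is UFSS. We know $H/(W\cap H)_\infty$ is UFSS. The canonical map $H\to G$ induces $H/(W\cap H)_\infty \to G/N$, which is injective (since $(W\cap H)_\infty=H\cap N$) and has dense image (density of $H$ in $G$ passes to quotients). By Proposition~\ref{perm_prop_3}(a), the completion of a dense UFSS subgroup is UFSS; so if $G/N$ is (up to the natural map) the completion, or at least a group sandwiched between $H/(H\cap N)$ and its completion, then $G/N$ is UFSS by Proposition~\ref{perm_prop_3}(a) together with (b) (every subgroup of a UFSS group is UFSS). Concretely: $\widetilde{H/(H\cap N)}$ is UFSS, and $G/N$ embeds as a subgroup of it containing the dense copy of $H/(H\cap N)$, hence $G/N$ is UFSS. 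Then by Theorem~\ref{char_alm_min}(b)$\Rightarrow$(a), or directly from the definition, $G$ is almost minimal with witness $N$.

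\textbf{Main obstacle.} The delicate point is the bookkeeping around the neighbourhood $W$ and the identity $W_\infty\cap H=(W\cap H)_\infty$, and making sure the same $W$ (or $W\cap H$) simultaneously witnesses: (i) local essentiality of $H$ in $G$, (ii) local minimality of $G$, (iii) minimality of $N=\overline{(W\cap H)_\infty}$, and (iv) the UFSS structure on the quotient. One has to choose $W$ small enough and symmetric, verify that the closure operations commute appropriately with $(\,\cdot\,)_\infty$ under density, and confirm that $\overline{(W\cap H)_\infty}^G$ is genuinely normal (automatic here since $G$ is abelian) and that the quotient map identifies the relevant groups. I expect the verification that $G/N$ is Hausdorff and that the induced map $H/(H\cap N)\to G/N$ is a topological embedding with dense image — so that Proposition~\ref{perm_prop_3}(a) applies — to require the most care, though each ingredient is routine.
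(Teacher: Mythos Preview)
Your plan is basically sound and would go through, but it is considerably more elaborate than the paper's argument, and the complications you flag as the ``main obstacle'' are entirely self-inflicted.

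The paper works directly from the \emph{definition} of almost minimality rather than from the GTG characterization in Theorem~\ref{char_alm_min}. Given a closed minimal subgroup $N$ of $H$ with $H/N$ UFSS, the paper simply takes $\overline{N}$ (closure in $G$) as the candidate. Minimality of $\overline{N}$ follows at once from the minimality criterion (Theorem~\ref{Min_Crit}): $N$ is minimal and dense in $\overline{N}$, hence $\overline{N}$ is minimal. For the quotient, the Grant-Sulley Lemma gives that $H/N \to G/\overline{N}$ is an embedding with dense image, so $G/\overline{N}$ is UFSS by Proposition~\ref{perm_prop_3}(a). That is the whole proof --- three lines.

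Your route instead passes through Theorem~\ref{char_alm_min}, forcing you to track a GTG neighbourhood $W$ in $G$, worry about whether $W$ is GTG in $G$ (which you correctly note it need not be), verify $W_\infty\cap H=(W\cap H)_\infty$, and invoke Theorem~\ref{crit} for local minimality of $G$ --- a step the paper does not need at all, since almost minimality is established directly from the definition without ever mentioning local minimality of $G$. Your final step (the embedding $H/(H\cap N)\hookrightarrow G/N$ with dense image) is precisely the Grant-Sulley Lemma, which the paper cites by name; you have correctly identified it as the one non-formal ingredient. In short: your argument is correct, but the paper's choice to avoid the GTG characterization eliminates every piece of bookkeeping you were worried about.
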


\begin{proof} Let $N$ be a closed minimal normal subgroup of $H$ such that $H/N$ is UFSS.
 According to the Grant-Sulley Lemma (see the proof in \cite[Lemma 4.3.2]{DPS}), $H/N\rightarrow G/\ol{N}$ is an embedding with dense image. 
 Because of the minimality criterion (Theorem \ref{Min_Crit}), $\ol{N}$ is a minimal and (trivially) closed subgroup of $G$. Since $G/\ol{N}$  has a dense UFSS subgroup, it is UFSS (see Proposition \ref{perm_prop_3}(a)).
\end{proof}

\begin{remark} Complete almost minimal abelian groups have some interesting properties that are worth mentioning. Indeed, such a group $G$ has a compact
subgroup $N$ such that $G/N$ is UFSS, in particular metrizable. Hence $G$ is almost metrizable so \v Cech complete (being complete). This implies that $G$ is a $k$-space and also a Baire space.
\end{remark}

In the remaining part of this section we show that every locally quasi-convex locally minimal group can be embedded into an almost minimal group. The proof makes use of some properties of free abelian topological groups which are reminded beforehand. Moreover, by applying an open mapping theorem to this embedding, we show that every complete locally quasi-convex locally minimal group is already almost minimal.

We start with a typical example of a locally quasi-convex UFSS group that we are going to need in what follows:

\begin{example}\label{ckt} For a compact space $K$, the group ${\cal C}(K,\T)$ of continuous functions $K\rightarrow \T$ endowed with the topology
of uniform convergence is a locally quasi-convex UFSS group with distinguished neighborhood $U=\{f\in{\cal C}(K,\T)|\ f(K)\subseteq
\T_+\}$; actually, $(1/n)U=\{f\in{\cal C}(K,\T)|\ f(K)\subseteq (1/n)\T_+\}$. We are going to deal with the following particular case: $K$ is the polar $V^\triangleright$
of a neighborhood $V\in {\cal V}(0)$ in an abelian Hausdorff group. The fact that this set is compact is a standard one; see e.g.~\cite[Proposition 3.5]{Diss}.\end{example}

Locally quasi-convex UFSS groups form the group analog of normed spaces. Since every normed space can be embedded in a Banach space
of the form ${\cal C}(K)$ (the continuous functions of a suitable compact space $K$ into the field) it is natural to ask whether locally quasi-convex UFSS groups have a similar property.

\begin{proposition} Let $(G,\tau)$ be a locally quasi-convex UFSS group with distinguished quasi-convex neighborhood $V$. Then
$$
\beta:(G,\tau)\longrightarrow {\cal C}(V^\triangleright,\T),\ x\longmapsto \alpha_G(x)|_{V^\triangleright}
$$
is an embedding.
\end{proposition}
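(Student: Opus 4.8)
The map $\beta$ is a homomorphism because $\alpha_G$ is, and restriction to $V^\triangleright$ is a homomorphism of the dual pairing; so the content is to show that $\beta$ is injective, continuous, and open onto its image. The plan is to verify these three properties separately, exploiting the UFSS structure on the source and Example~\ref{ckt} on the target, together with the description of $(1/n)V$ in terms of characters in $V^\triangleright$ given by Proposition~\ref{Bemerkung}(b).

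First I would address injectivity. Since $G$ is UFSS with distinguished neighborhood $V$, the subgroup $V_\infty=\bigcap_{n}(1/n)V$ is trivial (UFSS groups are NSS, as noted after the definition of UFSS). By Proposition~\ref{Bemerkung}(b), $V_\infty=\bigcap_{\chi\in V^\triangleright}\ker\chi$, so the characters in $V^\triangleright$ separate the points of $G$; equivalently, if $\beta(x)=0$ then $\chi(x)=0$ for all $\chi\in V^\triangleright$, whence $x\in V_\infty=\{0\}$. (In particular $G$ is maximally almost periodic, which is consistent with being locally quasi-convex.)

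Next, continuity and openness onto the image. Recall from Example~\ref{ckt} that the distinguished neighborhood of ${\cal C}(V^\triangleright,\T)$ is $U=\{f:\ f(V^\triangleright)\subseteq \T_+\}$, with $(1/n)U=\{f:\ f(V^\triangleright)\subseteq (1/n)\T_+\}$, and that these sets form a neighborhood basis at $0$. I would now simply compute $\beta^{-1}((1/n)U)$: by definition $\beta(x)\in (1/n)U$ means $\alpha_G(x)(\chi)=\chi(x)\in (1/n)\T_+$ for every $\chi\in V^\triangleright$, which by Proposition~\ref{Bemerkung}(b) (using that $V$ is quasi-convex) is exactly the condition $x\in (1/n)V$. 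Hence $\beta^{-1}((1/n)U)=(1/n)V$ for every $n\in\N$. Since $\{(1/n)V\}_n$ is a neighborhood basis at $0$ in $(G,\tau)$ (as $V$ is distinguished) and $\{(1/n)U\}_n$ is a neighborhood basis at $0$ in ${\cal C}(V^\triangleright,\T)$, this single identity simultaneously gives continuity of $\beta$ and openness of $\beta$ onto $\beta(G)$ (equipped with the subspace topology): a subset $O\subseteq G$ is a neighborhood of $0$ iff it contains some $(1/n)V=\beta^{-1}((1/n)U)$, iff $\beta(O)$ contains $\beta(G)\cap (1/n)U$, iff $\beta(O)$ is a neighborhood of $0$ in $\beta(G)$.

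\textbf{Main obstacle.} There is no deep obstacle here; the one point requiring care is the precise form of Proposition~\ref{Bemerkung}(b), namely that for a \emph{quasi-convex} $V$ one genuinely has $(1/n)V=\bigcap_{\chi\in V^\triangleright}\chi^{-1}((1/n)\T_+)$ rather than merely one inclusion — this is exactly where quasi-convexity of the distinguished neighborhood $V$ (part of the hypothesis) is used, and it is what makes $\beta^{-1}((1/n)U)$ equal to $(1/n)V$ on the nose. Everything else is a routine translation between the basis $\{(1/n)V\}$ on the group side and the basis $\{(1/n)U\}$ on the function-space side.
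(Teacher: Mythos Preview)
Your proof is correct and follows essentially the same route as the paper: the heart of both arguments is the identity $\beta^{-1}((1/n)U)=(1/n)V$ obtained from Proposition~\ref{Bemerkung}(b), which the paper records in the equivalent form $\beta((1/n)V)=\mathrm{im}\,\beta\cap(1/n)U$. Your version is in fact a bit more explicit than the paper's (you spell out injectivity via $V_\infty=\{0\}$, which the paper leaves implicit), and you correctly isolate quasi-convexity of $V$ as the point where Proposition~\ref{Bemerkung}(b) gives an equality rather than a mere inclusion.
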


\begin{proof} It is obvious that $\beta $ is a homomorphism.  For $x\in G$ and
$\chi\in V^\triangleright$,
$\chi(x)=\alpha_G(x)(\chi)=\beta(x)(\chi)$, then by Proposition \ref{Bemerkung}
(b),
$$
(1/n)V=\{x\in G|\ \chi(x)\in (1/n)\T_+\ \forall\chi\in V^\triangleright\},
$$
which implies $\beta((1/n)V)={\rm im}\beta\ \cap \ \{f\in{\cal C} (V^\triangleright,\T)|\ f(V^\triangleright)\subseteq (1/n)\T_+\}.$ This shows that $\beta$ is an embedding.
\end{proof}

\begin{facts}\label{facts}
\begin{itemize}
\item[(A)] For every Tychonoff space $X$, the free abelian topological group $A(X)$ exists and is characterized by the following properties:
there exists an embedding  $\eta:X\rightarrow A(X)$ such that $\eta(X)$ is a basis of $A(X)$ and for every continuous mapping
$f:X\rightarrow H$ where $H$ is an abelian topological group there exists a continuous homomorphism $F:A(X)\rightarrow H$ which satisfies $F\circ\eta=f$.
(\cite{Markov})

\item[(B)] For  a compact Hausdorff space $K$, the free abelian topological group $A(K)$ is a hemicompact $k$-space, and a cobasis for the
compact sets is given by the family of sets $\{\eta(K)+\buildrel n \over \dots +\eta(K)\,:\,n\in\N\}$ (\cite{MMO}).
\item[(C)] For a compact Hausdorff space $K$, the dual group of $A(K)$ is topologically isomorphic to ${\cal C}(K,\T)$; more precisely,
$A(K)^\wedge \rightarrow {\cal C}(K,\T),\ \chi\mapsto \chi\circ \eta$ is a topological isomorphism   (\cite{Pestov}). In particular $A(K)^\wedge$ is a UFSS group (according to Example \ref{ckt}).
\item[(D)] A quotient  mapping of a hemicompact $k$-space onto a Hausdorff space is   compact covering (\cite{Morita}).
\end{itemize}
\end{facts}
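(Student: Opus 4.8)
The statements collected in Facts~\ref{facts} are quoted without proof (with references to \cite{Markov}, \cite{MMO}, \cite{Pestov}, \cite{Morita}); the plan below sketches how one would establish each of them. For (A) I would take $A(X)$ to be the abstract free abelian group on the underlying set of $X$, equipped with the finest group topology making the canonical map $\eta\colon X\to A(X)$ continuous. The universal property is then purely formal: given continuous $f\colon X\to H$, the homomorphism $F$ supplied by freeness of the underlying group is continuous because pulling a neighbourhood basis of $0_H$ back along $F$ and then along $\eta$ yields a family generating a group topology in which $\eta$ is continuous, hence no finer than the topology of $A(X)$. The point that actually requires work is that $\eta$ is a topological embedding when $X$ is Tychonoff; here one invokes Graev's extension of an arbitrary bounded continuous pseudometric $d$ on $X$ (after adjoining a base point) to a translation-invariant pseudometric $\hat d$ on $A(X)$, and checks that $\hat d$ restricts to $d$ on $\eta(X)$. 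Since a Tychonoff space carries enough such pseudometrics, $\eta$ is an embedding.

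For (B) I would write $X=\eta(K)$ and let $A_n(K)$ denote the set of words of length $\le n$ in $X\cup(-X)\cup\{0\}$; these are exactly the sets $\eta(K)+\stackrel{n}{\cdots}+\eta(K)$ referred to in the statement (once one symmetrizes). Each $A_n(K)$ is a continuous image, under word evaluation, of a finite power of the compact space $K\sqcup K\sqcup\{*\}$, hence compact, and $A(K)=\bigcup_n A_n(K)$ is an increasing union. The substantive claim — and the one that genuinely uses compactness of $K$ — is that $A(K)$ carries the colimit topology of this sequence, so it is a $k_\omega$-space and in particular a hemicompact $k$-space, and that every compact subset of $A(K)$ is already absorbed by some $A_n(K)$; the family $\{A_n(K)\}$ is then the asserted cobasis of compacta.

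For (C) the algebraic isomorphism $A(K)^\wedge\to{\cal C}(K,\T)$, $\chi\mapsto\chi\circ\eta$, is precisely the universal property of (A) specialized to $H=\T$. To see it is a homeomorphism one feeds in (B): a basic neighbourhood $(A_n(K),\T_+)$ of $0$ for the compact-open topology corresponds, via Proposition~\ref{Bemerkung}(a), to $\{f\in{\cal C}(K,\T):f(K)\subseteq(1/n)\T_+\}$, which is exactly the distinguished neighbourhood basis of the uniform topology described in Example~\ref{ckt}; hence $A(K)^\wedge$ is topologically ${\cal C}(K,\T)$ and therefore UFSS. For (D), a hemicompact $k$-space is the colimit of an increasing sequence $(L_n)$ of compacta; a Hausdorff quotient $q\colon X\to Y$ is then again a $k_\omega$-space with defining sequence $(q(L_n))$, so any compact $C\subseteq Y$ lies in some $q(L_n)$, and then $C=q\bigl(q^{-1}(C)\cap L_n\bigr)$ exhibits $C$ as a compact image, i.e.\ $q$ is compact covering.

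The only genuinely non-routine inputs are the embedding assertion in (A), which rests on the Graev pseudometric construction, and the ``compact sets are absorbed by the $A_n(K)$'' part of (B), which is false for general Tychonoff $X$ and really needs compactness of $K$; I expect (B) to be the main obstacle, since everything in (C) and (D) then reduces to the defining universal property together with a short $k_\omega$-space argument.
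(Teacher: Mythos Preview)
The paper does not prove Facts~\ref{facts}; it records them with references to the literature (Markov, Mack--Morris--Ordman, Pestov, Morita), exactly as you note in your opening sentence. In that sense your proposal agrees with the paper and then goes further by outlining the arguments behind the citations.

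Those outlines are sound. A couple of minor remarks: in (B) you rightly flag that the paper's $\eta(K)+\stackrel{n}{\cdots}+\eta(K)$ must be read as the symmetrized word-length sets $A_n(K)$ for the cobasis assertion to make sense; this is the intended meaning. In (C), your identification of $(A_n(K),\T_+)$ with $\{f:f(K)\subseteq(1/n)\T_+\}$ is an honest equality, not just a mutual refinement: if $f(K)\subseteq q([-1/(4n),1/(4n)])$ then any signed sum of at most $n$ values of $f$ lands in $\T_+$, and conversely the words $k\eta(x)$ with $k\le n$ force $f(x)\in(1/n)\T_+$. In (D), your $k_\omega$ argument is correct; the only step that deserves a word is that a Hausdorff quotient of a $k_\omega$-space is again $k_\omega$ with defining sequence $(q(L_n))$, which follows because $q^{-1}(B)\cap L_n=(q|_{L_n})^{-1}(B\cap q(L_n))$. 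You correctly single out the Graev-pseudometric embedding in (A) and the absorption of compacta in (B) as the substantive points.
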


\begin{theorem} \label{embedding}Let $(G,\tau)$ be a locally quasi-convex locally minimal topological group. Then $(G,\tau)$ can be embedded
into a complete locally quasi-convex topological group $A$ such that the latter group has a compact subgroup $B$ such that $A/B$ is
UFSS and $N:=G\cap B$ is a minimal subgroup of $G$. In particular, $A$ is an almost minimal group.
\end{theorem}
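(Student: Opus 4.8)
The plan is to build $A$ as a suitable subgroup of the dual of a free abelian topological group over a compact space attached to $G$, mimicking the embedding of a normed space into a space $\cal C(K)$. First I would fix a quasi-convex neighborhood $V$ of $0$ in $G$ witnessing local minimality, which (by Proposition \ref{Bemerkung}(b)) may be taken so that the sets $(1/n)V$ are all quasi-convex and form a neighborhood basis modulo the part of $V$ that sits in $V_\infty$. The compact space to work with is $K:=V^\triangleright\subseteq G^\wedge$, which is compact by the standard polar argument cited in Example \ref{ckt}. By Facts \ref{facts}(A) the evaluation map $G\to {\cal C}(K,\T)={\cal C}(V^\triangleright,\T)$, $x\mapsto\alpha_G(x)|_{V^\triangleright}$, need not be injective in general, but the kernel is exactly $V_\infty^{\perp\perp}$-type obstruction; restricting attention to the Hausdorff quotient one gets a locally quasi-convex UFSS target, and the embedding proposition just before this theorem handles the UFSS case. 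For the general locally minimal (not necessarily UFSS) $G$, the idea is to push $G$ into the dual $A(K)^\wedge$ of the free abelian topological group $A(K)$ over $K=V^\triangleright$; by Facts \ref{facts}(C) this dual is topologically isomorphic to ${\cal C}(K,\T)$ and hence is itself a locally quasi-convex UFSS group.

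Second, I would let $A$ be the closure in $A(K)^\wedge$ of the image of $G$ under the map $x\mapsto\alpha_G(x)\circ\eta$ (equivalently $x\mapsto\alpha_G(x)|_K$ under the identification), so $A$ is complete and locally quasi-convex as a closed subgroup of a complete locally quasi-convex group. The candidate compact subgroup $B\subseteq A$ should be $A\cap W$ where $W$ is a suitable "small" distinguished neighborhood in ${\cal C}(K,\T)$, chosen so that $B=A\cap W_\infty'$ for the distinguished $U=\{f:f(K)\subseteq\T_+\}$ of ${\cal C}(K,\T)$ — i.e. $B$ is the set of characters vanishing on all of $K=V^\triangleright$, which by the bipolar theorem is (the closure of) $\langle V\rangle^\perp$-type object, at any rate a closed subgroup on which the UFSS topology of ${\cal C}(K,\T)$ is trivial, hence compact by Prodanov--Stoyanov once one knows it is precompact. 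The quotient $A/B$ then embeds into ${\cal C}(K,\T)/U_\infty$, which is UFSS by Proposition \ref{perm_prop_3}(b) (subgroups of UFSS are UFSS) applied after observing $A/B\hookrightarrow {\cal C}(K,\T)/U_\infty$ and the latter is UFSS since UFSS passes to Hausdorff quotients by the distinguished subgroup. Finally, $N:=G\cap B$ consists of those $x\in G$ with $\chi(x)=0$ for all $\chi\in V^\triangleright$, i.e. $N=V_\infty$ in $G$; since $G$ is $V$-locally minimal and $V$ is quasi-convex hence (Proposition \ref{Bemerkung}(c)) a GTG set, Proposition \ref{minsubgr}(b1) gives that $N=V_\infty$ is a minimal subgroup of $G$. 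Thus $N=G\cap B$ is minimal, $B$ is compact, $A/B$ is UFSS, and by definition $A$ is almost minimal.

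Third, I must check that $G\hookrightarrow A$ is a topological embedding, not merely a continuous injection. Injectivity follows because $G$ is Hausdorff and locally quasi-convex, so its characters separate points — but wait, here is the subtlety: local quasi-convexity gives separation of points only together with the neighborhoods $(1/n)V$ being quasi-convex, which forces $\bigcap_n (1/n)V = V_\infty$ and $V_\infty = \bigcap_{\chi\in V^\triangleright}\ker\chi$; the map has kernel $V_\infty\cap(\text{points killed by }G^\wedge)$, and since $G$ is Hausdorff locally quasi-convex the global character group separates points, so the map is injective on $G$. For the embedding property one uses that $\{(1/n)V:n\in\N\}$ together with the open neighborhoods coming from $\sigma(G,G^\wedge)$ generate $\tau$ — this is where local minimality of $G$ enters decisively via an argument parallel to Proposition \ref{injalmmin}: the topology $\tau$ equals the supremum of ${\cal T}_V$ and $\sigma(G,G^\wedge)$ when $G$ is $V$-locally minimal with $V$ GTG and $G$ is maximally almost periodic (which locally quasi-convex Hausdorff groups are). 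Pulling back the neighborhood filter of $A$ recovers exactly this supremum, giving the embedding.

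\textbf{Main obstacle.} I expect the hard part to be the verification that $B$ is genuinely \emph{compact} rather than merely precompact and closed, and relatedly that $A/B$ is UFSS rather than just NSS: one needs $B$ to be precisely the annihilator-type subgroup on which ${\cal C}(K,\T)$'s distinguished neighborhood degenerates, and one must invoke completeness of $A$ plus Prodanov--Stoyanov; simultaneously one must confirm $G\cap B$ equals $V_\infty$ exactly (no larger), which rests on the bipolar identity $V_\infty=\bigcap_{\chi\in V^\triangleright}\ker\chi$ from Proposition \ref{Bemerkung}(b) and on $K=V^\triangleright$ being the "full" polar inside $A(K)^\wedge\cong{\cal C}(K,\T)$. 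The bookkeeping that reconciles the GTG/UFSS machinery on ${\cal C}(K,\T)$ with the abstract $V_\infty$ of $G$ — in particular that $\eta(K)+\cdots+\eta(K)$ being a cobasis for compacta in $A(K)$ (Facts \ref{facts}(B)) makes the evaluation topology on $A(K)^\wedge$ coincide with the uniform topology and hence UFSS — is the technical crux, and Facts \ref{facts}(D) (the open/compact-covering mapping theorem) will be what finally upgrades "complete locally quasi-convex locally minimal" to "almost minimal" in the corollary that follows.
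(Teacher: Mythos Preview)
Your approach has a genuine gap at the most basic level: the map $G\to A(K)^\wedge\cong{\cal C}(K,\T)$, $x\mapsto\alpha_G(x)|_{V^\triangleright}$, is \emph{not} injective when $G$ fails to be UFSS. Its kernel is precisely $\bigcap_{\chi\in V^\triangleright}\ker\chi=V_\infty$ by Proposition \ref{Bemerkung}(b), not ``$V_\infty\cap(\text{points killed by }G^\wedge)$'' as you write --- only the characters in $V^\triangleright$ are being evaluated, so the fact that the whole of $G^\wedge$ separates points is irrelevant here. Consequently your $A$, being a closed subgroup of the UFSS group ${\cal C}(K,\T)$, is itself UFSS; the distinguished neighborhood $U=\{f:f(K)\subseteq\T_+\}$ of ${\cal C}(K,\T)$ satisfies $U_\infty=\{f:f\equiv 0\}=\{0\}$, so your candidate $B=A\cap U_\infty$ is trivial. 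You end up embedding $G/V_\infty$ (not $G$) into a UFSS group, which is exactly the preliminary Proposition preceding the theorem and not the theorem itself.

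The paper's construction avoids this by \emph{not} mapping $G$ into $A(K)^\wedge$. Instead it uses $A(K)$ to retopologize $G^\wedge$: one takes the canonical extension $\Phi:A(K)\to G^\wedge$ of the inclusion $V^\triangleright\hookrightarrow G^\wedge$ and equips $G^\wedge$ with the final topology $\rho$ from $\Phi$, so that $H:=\langle V^\triangleright\rangle$ becomes an \emph{open} subgroup of $(G^\wedge,\rho)$. One then sets $A:=(G^\wedge,\rho)^\wedge$ and $B:=H^\triangleright$. Now $B$ is genuinely compact (annihilator of an open subgroup in a dual), and $A/B\cong H^\wedge$ embeds into $A(K)^\wedge\cong{\cal C}(K,\T)$ because the quotient $A(K)\to(H,\rho|_H)$ is compact-covering by Facts \ref{facts}(B),(D) --- this is where those facts actually do their work. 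The map $\beta:G\to A$, $x\mapsto(\chi\mapsto\chi(x))$, is injective because \emph{all} of $G^\wedge$ now acts, and the embedding property follows by comparing $\tau$ with the topology $\sigma$ of uniform convergence on $\rho$-compacta and invoking $V$-local minimality. Finally $\beta^{-1}(B)=\bigcap_{\chi\in V^\triangleright}\ker\chi=V_\infty$, so $N=G\cap B=V_\infty$ is minimal by Proposition \ref{minsubgr}(b1). The key structural point you missed is that $A$ must be ``larger'' than ${\cal C}(K,\T)$ --- it is a group whose quotient by a compact subgroup sits inside ${\cal C}(K,\T)$, not a subgroup of ${\cal C}(K,\T)$.
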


\begin{proof} Let $U$ be  a quasi-convex neighborhood of $0$ in $G$ which witnesses local minimality. According to Proposition \ref{minsubgr}(b1), $N:=U_\infty$ is
minimal. Further, $K:=U^\triangleright$ is a compact subset of $G^{\wedge}$. Let $\Phi:A(K)\rightarrow G^{\wedge}$ be the
continuous homomorphism which extends the embedding $\phi:K\to
G^{\wedge}$ and let $\rho$ be the final topology on $G^{\wedge}$ induced by $\Phi $. $H:=\Phi(A(K))=\langle U^\triangleright\rangle$
is an open subgroup of  $(G^{\wedge},\rho)$. Since $G^{\wedge}$ (with the compact-open topology)
is a Hausdorff group and  $\rho$ is a finer topology, $(G^{\wedge},\rho)$ is a Hausdorff  group, too. Since $\psi:A(K)\to (H,\rho|_H),
\ x\mapsto \Phi(x)$ is a quotient mapping, Facts \ref{facts} (B) and (D) imply that $\psi$ is compact  covering.   Since $N=U_\infty\subseteq U$,
we have $U^\triangleright\subseteq N^\triangleright$ and since the latter set is a subgroup, $H=\langle U^\triangleright\rangle \subseteq N^\triangleright$.

Let $\sigma$ be the group topology on $G$ of uniform convergence on the compact subsets of $(G^{\wedge},\rho)$ this means, a
neighborhood basis of $0$ is given by the sets $(S^\triangleleft)$ where $S$ runs through all compact subsets of $(G^{\wedge},\rho)$.

We shall show that

\begin{itemize}
\item[(a)] $U$ is a neighborhood of $0$ in $(G,\sigma)$.
\item[(b)] $(G,\sigma)$ is a Hausdorff topological group.
\item[(c)] $\sigma\leq \tau$.
\end{itemize}

(a) Since $U$ is quasi-convex, we have $U=U^{\triangleright\triangleleft}$. It remains to observe that $
U^{\triangleright}=K$ is  a compact subset of $(G^{\wedge},\rho)$; indeed,   $\eta: K \rightarrow A(K)$   and $\Phi$ are continuous.

(b) This is clear, since $G$ is maximally almost periodic.

(c) Let $S$ be a compact subset of $(G^{\wedge},\rho)$. By the definition of $\rho$
and since $H$ is open in $(G^{\wedge},\rho)$, there is a compact subset $S_0$ contained in $(H,\rho|_H)$ and a finite set $\{\chi_1,\ldots,\chi_n\}$ in $G^{\wedge}$ such that $S\subseteq \bigcup_{j=1}^n(\chi_j + S_0)$. Hence
$$
S^\triangleleft \supseteq \bigcap_{j=1}^n(\chi_j + S_0)^\triangleleft\supseteq (1/2)S_0^\triangleleft\cap (1/2)\{\chi_1,\ldots,\chi_n\}^\triangleleft.$$

Since $\psi$ is compact covering, there exists $m\in\N$ such that $\psi(\eta(K)+\buildrel m \over \dots +\eta(K))=(K+\buildrel m \over \dots+K)\supseteq S_0$. Now Proposition \ref{Bemerkung}(b) implies that   $(1/m)U\subseteq (K+\buildrel m \over \dots+K)^\triangleleft\subseteq
S_0^\triangleleft$ and hence $(1/2m)U\subseteq (1/2)S_0^\triangleleft$. Since $(1/2)\{\chi_1,\ldots,\chi_n\}^\triangleleft$ is a neighborhood
of
$0$ in $\tau$ it follows that $\sigma$ is coarser than $\tau$.

By assumption, $(G,\tau)$ is locally minimal with respect to $U$, so (a)-(c) imply  that $\sigma=\tau$.

Hence
$$
\beta:G\longrightarrow {(G^{\wedge},\rho)^{\wedge}},\
x\longmapsto(\chi\mapsto \chi(x))
$$
is an embedding where the character group is (as usual) endowed with the compact-open topology. Since $H$ is open in
$(G^{\wedge},\rho)$, $H^\triangleright $ is a compact subgroup of $(G^{\wedge},\rho)^{\wedge}$. We define $A:=(G^{\wedge},\rho)^{\wedge}$ and $B:=H^\triangleright$. $A$ is complete. Indeed, $A$ is the character group of $(G^{\wedge},\rho),$ which has an open hemicompact subgroup. Hence the assertion follows from Proposition 7.1 in \cite{glockner}.

Then,   $A/B\cong H^{\wedge}$ (see \cite[Lemma 2.2]{BCM}) and since $\psi:A(K)\rightarrow H$ is compact covering, the dual homomorphism
is an embedding. Therefore, by  Facts \ref{facts}(C) , $H^{\wedge}$ is topologically isomorphic to a subgroup of ${\cal C}(K,\T)$ and hence UFSS.  From
$$
\beta^{-1}(B)=\beta^{-1}(H^\triangleright)=\{x\in G|\ \beta(x)\in H^\triangleright\}=
\{x\in G|\ \chi(x)=0\ \forall
\chi\in U^\triangleright\}=\bigcap_{\chi\in U^\triangleright}{\rm ker}(\chi)\stackrel{{\rm Prop.~}\ref{Bemerkung}(b)}{ = }U_\infty
$$
the assertion $N=\beta^{-1}(B)$ follows.
\end{proof}

\begin{lemma}\label{HomSatz}[Open Mapping Theorem] Let $G$ be  a closed and $B$ a compact subgroup of the abelian Hausdorff group $(A,\tau)$.
Let $N:=B\cap G$. Then $\iota:G/N\rightarrow A/B,\ x+N\mapsto x+B$ is an embedding.
\end{lemma}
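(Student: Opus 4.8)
The statement is a version of the open mapping theorem: given a closed subgroup $G$ and a compact subgroup $B$ of an abelian Hausdorff group $(A,\tau)$, with $N=B\cap G$, the natural map $\iota\colon G/N\to A/B$, $x+N\mapsto x+B$ is an embedding. The plan is to check first that $\iota$ is a well-defined injective continuous homomorphism, and then that it is open onto its image, i.e.\ that for every neighborhood $W$ of $0$ in $G$ the image $\iota(\pi_N(W))$ is a neighborhood of $0$ in $\iota(G/N)$, where $\pi_N\colon G\to G/N$ is the canonical projection. Equivalently, writing $\pi_B\colon A\to A/B$ for the canonical projection, one must show that $\pi_B(W)\cap\pi_B(G)$ is contained in $\pi_B(W')$ for a suitable larger neighborhood $W'$ of $0$ in $G$; since $\iota(G/N)=\pi_B(G)$ with the subspace topology from $A/B$, this is exactly openness of $\iota$ onto its image.

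First I would dispatch the easy points. Well-definedness and injectivity of $\iota$ are immediate from $N=B\cap G$: for $x\in G$, $x+B=B$ in $A/B$ iff $x\in B$ iff $x\in B\cap G=N$. Continuity of $\iota$ follows because $\iota\circ\pi_N$ equals the composition $G\hookrightarrow A\xrightarrow{\pi_B}A/B$, which is continuous, and $\pi_N$ is a quotient map. So the whole content is openness onto the image. The standard trick here is to exploit compactness of $B$. Fix a neighborhood $W$ of $0$ in $G$; extend it to an open neighborhood $\widetilde W$ of $0$ in $A$ with $\widetilde W\cap G\subseteq W$ (possible since $G$ carries the subspace topology), and pick a symmetric open neighborhood $O$ of $0$ in $A$ with $O+O\subseteq\widetilde W$.

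The key step is the compactness argument. Suppose $a\in G$ with $\pi_B(a)\in\pi_B(O)$, say $a=o+b$ with $o\in O$, $b\in B$; then $b=a-o\in G-O$. The crucial observation is that $b$ ranges over the \emph{compact} set $B$, and I want to cover $B$ cleverly. Consider the family of open sets $\{(g+O)\cap B : g\in G\}$ — wait, this needn't cover $B$. Instead, here is the right approach: I claim that $\pi_B(O)\cap\pi_B(G)\subseteq\pi_B(\widetilde W\cap G)$ provided we argue via: $G\cap(O+B)$ — is it contained in $(O+O+\cdots)$? The honest route is to use that $B$ is compact and $N=B\cap G$ is the kernel, so that $G/N\to A/B$ is injective, and then apply the fact that a continuous bijective homomorphism from a quotient by a compact subgroup need not be open in general — so one genuinely needs the compactness of $B$ (not $G$!) in the form: the map $B\times (G/N)\to A/B$-related covering argument. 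Concretely, for $a\in G$ with $a+B\in O+B$, write $a=o+b$; since $b\in B$ is compact and $o=a-b\in G-B$ varies over a set, cover $B$ by finitely many translates $b_i+(O\cap B)$...

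I would organize the covering as follows: it suffices to show that for each $b\in B$ there is a neighborhood $O_b$ of $0$ in $A$ such that $(b+O_b)\cap G\subseteq W+B$ with the $B$-part controlled; by compactness finitely many $b_i+O_{b_i}$ cover $B+O$ (for small enough uniform $O$), and patching these gives the desired inclusion $\pi_B(O)\cap\pi_B(G)\subseteq\pi_B(W)$. The point making this work is that for $b\notin G$ one can \emph{separate}: since $G$ is closed, $A\setminus G$ is open, so if $b\notin G$ there is a neighborhood missing $G$ entirely, contributing nothing; and for $b\in B\cap G=N$ the translate $b+O$ meets $G$ in $b+(O\cap G)\subseteq N+W$, which maps into $\pi_B(W)$. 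So the main obstacle — and the step I would be most careful with — is making the finite subcover of the compact set $B$ interact correctly with the single ambient neighborhood $O$, i.e.\ choosing $O$ \emph{uniformly small} so that $b+O$ for $b$ ranging over $B$ is handled by the finitely many chosen $b_i$; this is where a standard compactness-plus-continuity-of-the-group-operation lemma (a "tube lemma" for the compact subgroup $B$) does the work, and it is exactly the reason one needs $B$ compact rather than merely closed.
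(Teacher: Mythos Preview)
Your proposal is essentially correct, though the exposition is tangled (with false starts and self-corrections). Once cleaned up, the argument is: given a neighborhood $W$ of $0$ in $G$, cover the compact set $B$ by open sets $V_b$ in $A$ with $V_b\cap G\subseteq N+W$ (for $b\in N$ take $V_b=b+\widetilde W$; for $b\in B\setminus G$ use closedness of $G$ to choose $V_b$ disjoint from $G$), extract a finite subcover $\bigcup V_{b_i}$, and then use compactness of $B$ once more to find $O$ with $B+O\subseteq\bigcup V_{b_i}$, so that $(O+B)\cap G\subseteq N+W$, i.e.\ $\pi_B(O)\cap\pi_B(G)\subseteq\iota(\pi_N(W))$.

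The paper takes a much shorter and different route: instead of proving that $\iota$ is \emph{open} onto its image, it proves that $\iota$ is \emph{closed}. The key observation is that the canonical projection $f\colon A\to A/B$ is a closed map whenever $B$ is compact (since $F+B$ is closed for closed $F$, and $f^{-1}(f(F))=F+B$). Hence $f\restriction_G\colon G\to f(G)$ is closed (as $G$ is closed in $A$), and since $f\restriction_G$ factors as $\iota\circ\pi_N$ with $\pi_N$ surjective, $\iota$ is closed onto its image. Combined with continuity and injectivity, this gives the embedding at once. Your covering argument re-proves this special case by hand; the paper's proof extracts the general principle (compact-kernel quotients are closed maps) and applies it, avoiding the tube-lemma machinery entirely.
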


\begin{proof} Note first that the canonical homomorphism $f: A \to A/B$ is closed.  Indeed, if $F\subseteq A$ is closed, then
$F+B=f^{-1}(f(F))$ is still closed as $B$ is compact. Since $f$ is an open map, this implies that $f(F)$ is closed. In particular, the
image $f(G)$
%
%
of $G$ under $f$ is closed in $A/B$. Since $\iota:G/N\rightarrow A/B$ is a continuous injection, it suffices
only to check that $\iota$ is closed. But this is obvious, since $f\restriction _G:G\to f(G)$ is a closed map (as $G$ is closed in
$A$) and coincides with the composition of $\iota$ and  the surjection $G\rightarrow G/N$.
\end{proof}

\begin{corollary}\label{completion_slm} Every complete locally quasi-convex locally minimal abelian group is almost minimal. In particular, every LCA group is almost minimal. \end{corollary}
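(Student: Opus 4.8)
The plan is to combine the embedding of Theorem \ref{embedding} with the Open Mapping Theorem (Lemma \ref{HomSatz}) and then exploit the permanence of almost minimality under passing to dense subgroups in reverse. Let $(G,\tau)$ be a complete locally quasi-convex locally minimal abelian group. First I would apply Theorem \ref{embedding} to obtain an embedding of $G$ into a complete locally quasi-convex group $A$ possessing a compact subgroup $B$ such that $A/B$ is UFSS and $N:=G\cap B$ is a minimal subgroup of $G$. Since $G$ is complete, its image under the embedding is closed in $A$; so we may and do regard $G$ as a closed subgroup of $A$.

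Next, with $G$ closed in $A$ and $B$ compact, Lemma \ref{HomSatz} gives that $\iota:G/N\to A/B$, $x+N\mapsto x+B$, is an embedding. Since $A/B$ is UFSS, every subgroup of $A/B$ is UFSS by Proposition \ref{perm_prop_3}(b); in particular the image $\iota(G/N)$ is UFSS, and hence $G/N$ is UFSS. Moreover $N$ is closed in $G$ (it is the intersection of $G$ with the compact, hence closed, subgroup $B$) and normal (all groups here are abelian), and $N$ is minimal by the conclusion of Theorem \ref{embedding}. Therefore $G$ has a closed minimal normal subgroup $N$ with $G/N$ UFSS, which is exactly the statement that $G$ is almost minimal.

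For the final assertion, every LCA group is locally compact, hence complete; it is locally quasi-convex (as recalled in Section 5.1, locally compact abelian groups are locally quasi-convex); and it is locally minimal (locally compact groups are locally minimal, as noted in Subsection 3.1). So the first part applies and every LCA group is almost minimal.

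The main obstacle in this chain is not in the present corollary at all — it is absorbed into Theorem \ref{embedding}, whose proof already does the substantive work of building $A$, $B$ and verifying $N=\beta^{-1}(B)=U_\infty$. Given that theorem and the Open Mapping Theorem, the only delicate point here is observing that completeness of $G$ forces its image in $A$ to be closed, so that Lemma \ref{HomSatz} is applicable; everything else is routine bookkeeping with the permanence property of UFSS groups.
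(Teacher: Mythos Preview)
Your proof is correct and follows essentially the same route as the paper: apply Theorem \ref{embedding}, use completeness to get closedness of the image, invoke Lemma \ref{HomSatz} to embed $G/N$ into the UFSS group $A/B$, and conclude via Proposition \ref{perm_prop_3}(b). Your write-up even fills in a couple of details (closedness and normality of $N$, the LCA justification) that the paper leaves implicit.
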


\begin{proof} Let $G$ be a complete locally quasi-convex locally minimal abelian group. According to Theorem \ref{embedding}, $G$ can be embedded into  an almost minimal group $A$ which has a compact subgroup $B$ such that $A/B$ is UFSS and $N:=G\cap B$ is minimal. Since $G$ is complete, its image in $A$ under the embedding is closed. According to Lemma \ref{HomSatz}, $G/N\rightarrow A/B$ is an embedding. Since subgroups of UFSS groups are UFSS (Proposition \ref{perm_prop_3}(b)), the assertion follows.
\end{proof}

The following generalization of Corollary \ref{completion_slm} is based on the fact that forming completions preserves both local minimality (Corollary \ref{Crit2}(a)) and local quasi-convexity (this is Lemma 8 in \cite{elenawojtek}; see also Corollary 6.17 in \cite{Diss}):

\begin{corollary} \label{completion_slm2}
The completion of a locally quasi-convex locally minimal abelian group is almost minimal.
\end{corollary}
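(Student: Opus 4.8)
The plan is to reduce Corollary \ref{completion_slm2} to Corollary \ref{completion_slm} by passing to the completion. Let $(G,\tau)$ be a locally quasi-convex locally minimal abelian group and let $\wt G$ denote its Ra\u\i kov completion. The whole argument rests on checking that $\wt G$ inherits from $G$ the three relevant properties — completeness, local quasi-convexity, and local minimality — so that Corollary \ref{completion_slm} applies to $\wt G$ directly.

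First I would note that $\wt G$ is complete by definition of the Ra\u\i kov completion. Second, $\wt G$ is locally minimal: this is exactly the content of Corollary \ref{Crit2}(a), which asserts that the class of locally minimal groups is closed under completion (a dense locally essential subgroup forces local essentiality of the bigger group, hence, together with local minimality witnessed in $G$, local minimality of $\wt G$ via the local minimality criterion). Third, $\wt G$ is locally quasi-convex: this is the fact quoted just before the statement, namely Lemma 8 in \cite{elenawojtek} (alternatively Corollary 6.17 in \cite{Diss}), which says that local quasi-convexity is preserved under completion — concretely, if $U$ is a quasi-convex $0$-neighborhood of $G$ then its closure in $\wt G$ is a quasi-convex $0$-neighborhood of $\wt G$.

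Having verified these three properties for $\wt G$, I would invoke Corollary \ref{completion_slm} to conclude that $\wt G$ is almost minimal, which is precisely the assertion to be proved.

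Since all three ingredients are quoted results (two from the excerpt, one from the literature that the preamble explicitly permits us to cite), there is essentially no obstacle here; the only point that requires a half-sentence of care is that the ``locally quasi-convex'' hypothesis on $G$ is genuinely needed both to apply the completion-invariance of local quasi-convexity and, downstream, to apply Corollary \ref{completion_slm} — without it the conclusion can fail, which is consistent with the counterexamples promised in the introduction (Example \ref{counterexample}).

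\begin{proof}
Let $(G,\tau)$ be a locally quasi-convex locally minimal abelian group and let $\wt G$ be its completion. By definition $\wt G$ is complete. By Corollary \ref{Crit2}(a) the class of locally minimal groups is closed under completion, so $\wt G$ is locally minimal. Finally, local quasi-convexity is preserved by completion (Lemma 8 in \cite{elenawojtek}; see also Corollary 6.17 in \cite{Diss}), so $\wt G$ is locally quasi-convex. Thus $\wt G$ is a complete locally quasi-convex locally minimal abelian group, and Corollary \ref{completion_slm} yields that $\wt G$ is almost minimal.
\end{proof}
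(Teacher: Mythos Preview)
Your proof is correct and follows exactly the paper's own argument: the paper derives Corollary \ref{completion_slm2} from Corollary \ref{completion_slm} by invoking Corollary \ref{Crit2}(a) for preservation of local minimality under completion and Lemma 8 in \cite{elenawojtek} (or Corollary 6.17 in \cite{Diss}) for preservation of local quasi-convexity. There is nothing to add.
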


However, a locally quasi-convex (even a precompact)
locally minimal abelian group need not be almost minimal, as we will see in Example \ref{counterexample} below. Before presenting this example, we give a positive result which pushes the criterion of local minimality to a higher level, thus identifying the almost minimal groups among all
precompact locally minimal groups.

\begin{proposition}\label{Rem_Str_Loc_Min}  If $G$ is an abelian precompact locally minimal group,
then $G$ has a minimal closed subgroup $N$ such that $G/N$ admits a continuous monomorphism $G/N \hookrightarrow \T^n$ for some $n\in \N$.
\end{proposition}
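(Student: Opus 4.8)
The plan is to exploit the structure of precompact locally minimal abelian groups via the local minimality criterion (Theorem \ref{crit}) applied to the completion $K=\widetilde G$, which is a compact abelian group in which $G$ is dense and locally essential. First I would fix a neighborhood $U\in{\cal V}(0)$ witnessing local essentiality of $G$ in $K$; shrinking $U$, I may assume $U=V^\triangleright{}^\triangleleft$-type or, more conveniently, simply a closed neighborhood of $0$ in $K$. The key point is to find inside $U$ a closed subgroup $M$ of $K$ of finite \emph{coindex} in a suitable sense, i.e.\ with $K/M$ having a continuous monomorphism into some $\T^n$. By compact duality (Remark \ref{RemarkAn}), this amounts to finding a finitely generated subgroup $H$ of $K^\wedge$ with $M=H^\bot\subseteq U$: for a compact neighborhood $U$ of $0$ in $K$, the annihilator $U^\triangleright$ is finite... no — rather, since $K$ is compact, $U\supseteq N$ for some closed subgroup $N$ of $K$ with $K/N$ metrizable; more precisely I would use that the neighborhoods of $0$ in a compact group contain closed subgroups with \emph{second countable} (hence, combined with local essentiality, much smaller) quotient.

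Here is the cleaner route. Let $U$ witness local essentiality of $G$ in $K$. Choose a closed subgroup $L$ of $K$ with $L\subseteq U$ and $K/L$ metrizable (possible in a compact group). Then $G\cap L$ is locally essential in $L$ with respect to $U\cap L = L$, i.e.\ $G\cap L$ is \emph{essential} in $L$; hence $G\cap L$ is minimal and $L$ is its minimal closure. Now the real content: I must upgrade "$K/L$ metrizable" to "$K/L$ embeds continuously into $\T^n$". This need not hold for $L$ itself, but I can enlarge $L$ to a closed subgroup $N\supseteq L$ still contained in a slightly larger witnessing neighborhood, using that metrizable compact groups are finite-dimensional "modulo a profinite part" is false too — so instead I would argue directly: among all closed subgroups $N$ of $K$ with $N\subseteq U$ and $G\cap N$ essential in $N$, the family is closed under the operations needed, and I want one with $K/N$ of finite dimension and finite "$p$-rank data," forcing $K/N\hookrightarrow\T^n$. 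The mechanism to get \emph{finite-dimensionality} of $K/N$ is exactly Lemma \ref{corol1}: if $K$ (equivalently $K/N$) contained $\Z_p^2$ or $\Z(p)^\omega$ for some $p$, then $G$ would have size $\ge\cont$... but that is not given, so this line needs the hypothesis that $G$ is "small," which Proposition \ref{Rem_Str_Loc_Min} does \emph{not} assume.

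Reconsidering, the statement only asserts existence of \emph{some} minimal closed $N$ with $G/N\hookrightarrow\T^n$; it does not require $N\subseteq U$. So I would proceed: $G$ precompact locally minimal $\Rightarrow$ (Theorem \ref{crit}) $G$ is locally essential in $K=\widetilde G$ with respect to some $U$; pick a closed subgroup $L$ of $K$ with $L\subseteq U$, so that $G_1:=G\cap L$ is essential, hence minimal, in $L$, and $L$ is closed in $K$ hence compact, hence $L=\widetilde{G_1}$. Set $N:=L\cap G = G_1$, a minimal closed subgroup of $G$. Then $G/N = G/(G\cap L)$ embeds algebraically into $K/L$; I need a continuous monomorphism of $G/N$ into $\T^n$. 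Since $L$ is open in... it is \emph{not} open, but $K/L$ is a compact metrizable group, and I want to push $G/N$ into $\T^n$. The saving grace: replace $L$ by a smaller open-in-$L$ subgroup is wrong direction; instead choose $L$ from the start so that $K/L$ is a Lie group, i.e.\ $K/L\hookrightarrow \T^n$ for some $n$ — and crucially, the neighborhood $U$ of $0$ in the compact group $K$ \emph{does} contain such an $L$, because compact groups are projective limits of Lie groups, so $\{0\}$ has a neighborhood base of closed subgroups $L$ with $K/L$ a compact Lie group, and every compact abelian Lie group embeds continuously (indeed topologically) into some $\T^n$. That last fact is standard (a compact abelian Lie group is $\T^n\times F$ with $F$ finite, and $F$ embeds in a torus).

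\textbf{Main obstacle.} The only real obstacle is the embedding $G/N\hookrightarrow\T^n$ being \emph{continuous}: we get an algebraic injection $G/(G\cap L)\to K/L\cong\T^n\times F\hookrightarrow\T^{n+m}$, and this map is the restriction of the continuous quotient map $G\to G/L$-into-$K/L$, hence continuous; injectivity on $G/N$ holds since $\ker$ is exactly $G\cap L = N$. So the plan is: (1) invoke Theorem \ref{crit} to get $K=\widetilde G$ compact with $G$ dense locally essential, witnessed by $U$; (2) use the projective-limit structure of compact groups to pick a closed subgroup $L\subseteq U$ with $K/L$ a compact abelian Lie group, hence $K/L\hookrightarrow\T^n$; (3) by Proposition \ref{loc_ess}(a), $G\cap L$ is locally essential with respect to $L\cap U=L$, i.e.\ essential in $L$, so by Theorem \ref{Min_Crit} (with $G\cap L$ dense in the compact, hence minimal, group $L$) the subgroup $N:=G\cap L$ is minimal, and it is closed in $G$; (4) the composite $G\to G/L\hookrightarrow K/L\hookrightarrow\T^n$ is continuous with kernel $N$, giving the desired continuous monomorphism $G/N\hookrightarrow\T^n$. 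The one point requiring a line of care is why $G\cap L$ is \emph{dense} in $L$ (needed to apply Theorem \ref{Min_Crit}): this follows because essentiality of $G\cap L$ in $L$ forces $\overline{G\cap L}^{\,L}$ to meet every nonzero closed subgroup, and since $\overline{G\cap L}$ is itself a closed subgroup with trivial intersection with its complementary... more simply, $\overline{G\cap L}^{L}$ is a closed (hence compact) subgroup of $L$ whose essentiality in $L$ (inherited) combined with $L$ compact forces it to be dense, i.e.\ equal to $L$; alternatively one shrinks $L$ so that $G\cap L$ is automatically dense, using that $G$ is dense in $K$ and $L$ is a $G_\delta$ — this density check is the genuinely fiddly step and I would state it carefully.
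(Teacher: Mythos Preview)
Your final plan (steps (1)--(4)) is correct and yields a valid proof, but it is genuinely different from the paper's argument. The paper does not pass to the completion $K=\widetilde G$ or invoke the projective-limit-of-Lie-groups structure of compact groups at all. Instead it chooses a \emph{quasi-convex} neighborhood $U$ witnessing local minimality (available because precompact abelian groups are locally quasi-convex), sets $N:=U_\infty$, invokes Proposition~\ref{minsubgr}(b1) to get $N$ minimal, and then observes that the GTG topology ${\cal T}_U$ drops to a coarser precompact UFSS topology on $G/N$; the completion of $G/N$ for this coarser topology is compact and UFSS, hence a compact Lie group, hence embeddable in some $\T^n$. Your route trades the GTG/quasi-convex machinery and Proposition~\ref{minsubgr} for the structure theory of compact groups and a direct use of the local essentiality criterion; it is in that sense more elementary and independent of the paper's \S 5 apparatus. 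The paper's route, by contrast, is tailored to the almost-minimality framework of Theorem~\ref{char_alm_min} (same $N=U_\infty$).

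One genuine wrinkle in your plan: your proposed resolutions of the ``fiddly step'' are both wrong. A closed essential subgroup of a compact abelian group need \emph{not} equal the whole group (e.g.\ $p\,\Z(p^2)$ inside $\Z(p^2)$), so the argument ``essentiality of $\overline{G\cap L}$ in $L$ forces $\overline{G\cap L}=L$'' fails; and the $G_\delta$ remark does not give density either. The correct fix is simpler: you do not need $G\cap L$ dense in $L$ at all. Apply Theorem~\ref{Min_Crit} with the compact (hence minimal) group $\overline{G\cap L}$ in place of $L$: since $G\cap L$ is essential in $L$, it is a fortiori essential in the smaller group $\overline{G\cap L}$, and it is trivially dense there, so $G\cap L$ is minimal. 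With this correction, your argument goes through.
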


\begin{proof} Let $G$ be $U$-locally minimal, where $U$ is a quasi-convex neighborhood of $0$. The closed subgroup $N=U_\infty$ is minimal (Proposition \ref{minsubgr}(b)), the quotient $G/N$ is precompact as
well, and it admits the coarser UFSS (and precompact) topology induced by ${\cal T}_U$. The completion $K=\wt{G/N}$, with respect to that
coarser UFSS and precompact topology, is a compact UFSS group according to Proposition \ref{perm_prop_3}(a). It is a well known fact (see for instance \cite[32.1]{stroppel}) that
every compact UFSS group  is a Lie group.
This implies that that $K$ can be embedded in ${\mathbb T}^n$ for suitable $n\in {\mathbb N}$.
\end{proof}

As we have already said, this ``smallness"{} of the quotient $G/N$ with respect to the closed minimal subgroup $N$ does not imply that all precompact locally minimal groups are almost minimal, as the example given in the next subsection shows.

\subsection{A locally minimal, metrizable, precompact abelian group need not be almost minimal}

\begin{example}\label{counterexample}
There exists a locally minimal, metrizable, precompact abelian
group which is not almost minimal. More concretely, we will find a metrizable precompact locally minimal abelian group $G$ which has a closed subgroup $N$ such that $G/N$ is not UFSS and any minimal subgroup of $G$ is contained in $N$. Hence, if $N'$ is any closed minimal subgroup of $G$, there exists a continuous epimorphism $G/N'\to G/N.$ Since  every continuous homomorphic image
of a precompact UFSS group is UFSS (see \cite[Corollary 3.15]{LocMin}),  this  implies that $G/N'$ cannot be UFSS.

Let us fix some notations: We consider the subgroup $\Z$ of the discrete group $\Q$. As
$$
\Z^\bot\cong (\Q/\Z)^{\wedge} \cong (\bigoplus_{p\in\Prm}{\Z(p^\infty)})^{\wedge}\cong \prod_{p\in\Prm}\Z_p,
$$
and the group of $p$-adic integers is monothetic for every $p$ (topologically generated by $1_p\in\Z_p$), we can fix an isomorphism $\Z^\bot\to
\prod_{p\in\Prm}\Z_p$ and select elements $a_p\in \Z^\bot$ corresponding to $1_p$. Making use of this isomorphism we write
$\Z^\bot=\prod_{p\in \Prm}{\mathbb H}_p$ where ${\mathbb H}_p=\ol{\langle a_p\rangle}$. Let
$$
h:{\Q}^{\wedge}\longrightarrow {\Q}^{\wedge}/\prod_{p\in\Prm}p^2{\mathbb H}_p=:K
$$
be the canonical projection. The group $K$ is divisible, monothetic, compact and connected
(as $\Q^{\wedge}$ has these properties, see \cite[24.25, 24.32]{HR}). This will be tacitly used in the sequel. Let $x$ be a topological generator of $K$.

We consider the following subgroups of $K$:
$$
\begin{array}{rclcl} C_{p^2}&\quad:=\quad&\langle h(a_p)\rangle&\quad\cong\quad & \Z(p^2)\\
N&\quad:=\quad&\bigoplus_{p>2}pC_{p^2}&&\\
G&\quad:=\quad&\langle x\rangle \oplus N&&\\
L_0&\quad:=\quad&\prod_{p>2}C_{p^2}&&\\
L&\quad:=\quad&\prod_{p\in\Prm}C_{p^2}=h(\Z^\bot)&\quad\ \cong\quad&
\Z(4)\times L_0
\end{array}
$$

The precompact group $G$ is  the desired example of a locally minimal group that is not almost minimal. In order to prove this, we will make use of the following properties:
\begin{itemize}
\item [(i)] $N=G\cap L_0$

\item[] "$\subseteq$" is easy. Conversely, fix $kx+t\in G\cap L_0$ where
$t\in N$ and $k\in \Z$. It follows that $kx\in L_0$. Suppose
$k\not=0$. The divisibility of $K$ implies that
$$K=kK=k\ol{\langle x\rangle}\subseteq \ol{\langle
kx\rangle}\subseteq L_0,$$ which is a contradiction.
\item[] Items (ii) to (vi) are obvious:
\item[(ii)] $G$ is dense in $K$ (since ($K=\ol{\langle x\rangle}$) and in particular it is precompact.
\item[(iii)] $\ol{N} =\prod_{p>2} pC_{p^2} \leq L_0$,
\item[(iv)] $L_0/\ol{N}\cong \prod_{p>2}\Z(p) \cong \ol{N}$,
\item[(v)] $ L/\ol{N}\cong \Z(4)\times \ol{N}.$
\item[(vi)] $K/L\cong h(\Q^{\wedge})/h(\Z^\bot)\cong \Q^{\wedge}/\Z^\bot\cong \T$.
\item[(vii)] $K/L_0\cong \T. $
\item [] In order  to prove (vii), observe  that the connected group $K/L_0$ has a subgroup $L/L_0\cong C_{2^2}\cong \Z(4)$ (by the definitions) and further
 $(K/L_0)/(L/L_0)\cong K/L\cong \T$ (by (vi)). Let $X$ denote the dual group  of $K/L_0$. According to \cite[24.25]{HR}, $X$ is
 torsion-free  and $X$ has a subgroup
$ (L/L_0)^\perp\cong ((K/L_0)/(L/L_0))^{\wedge}\cong (K/L)^{\wedge}\cong  \T^{\wedge} \cong \Z$ of index $4$, since $(K/L_0)^{\wedge}/(L/L_0)^\bot$ $\cong
(L/L_0)^{\wedge}\cong \Z(4)$.  Therefore, $X\cong \Z $ too.

Now the formula  (vii)  provides a continuous surjective homomorphism $\zeta: K\to   \T$ with kernel $L_0$. Let $W= \zeta ^{-1}(\mbox{Int}(\T_+))$. Since inverse image of GTG sets by group homomorphisms are GTG sets (see \cite[Lemma 4.9]{LocMin}),   $W$ is a GTG-neighborhood of 0 in $K$ such that $W_\infty=\zeta^{-1}(\mbox{Int}(\T_+)_\infty)= {\rm ker}(\zeta)=L_0$.
\item[(viii)] Every closed nontrivial subgroup $S$ of $K$ contained in $W$ is a subgroup of $L_0$ and has nontrivial intersection with $N$ and hence with $G$.
\item[] Observe first  that   $S$ must be contained in $L_0$, the largest subgroup contained in $W$. Now we prove that every such $S$
nontrivially meets $N$. It is a well known fact that a closed subgroup $S$ of $L_0$ must have the form
$S= \prod_{p>2} A_p$, where each $A_p $ is a subgroup of the
respective $C_{p^2}$ (\cite[Example 4.1.3]{DPS}). Clearly, $A_p\ne 0$ for at least  one $p$ as $S\ne 0$. Then $ 0\ne pC_{p^2} \subseteq S\cap N\subseteq  S \cap G. $

\item[(ix)] $G$ is locally minimal.

\item[] This follows from Theorem \ref{crit}, since  by (viii) $G$ is locally essential in $K$, as witnessed by $W$.

\item[(x)] $N$ is closed in $G$.
\item[] This is clear, since $N=G \cap L_0$ and $L_0$ is closed in $K$.
\item[(xi)]  Every proper closed subgroup $H$ of $G$ is contained in $N$.
\item[] Assume that $H$ is a subgroup of $G$ not contained in $N$. Then $H$ has an element of the form $z= kx+t$, for some non-zero $k\in
\N$ and $t\in N$. Since $t$ is torsion, there exists $m>0$ such that $mz=mkx\in H$. Since $K$ is divisible, for every $j\in\N$ we have
$$
K=jK=j\ol{\langle x\rangle}\subseteq\ol{\langle jx\rangle}.
$$
In particular for $j=mk$ we obtain $K\subseteq\ol{\langle mkx\rangle}\subseteq H$. This proves that $H$ is dense in $G$ whenever $H$ is not contained in $N$.

\item[(xii)]  Every minimal subgroup of $G$ is contained in $N$.
\item[] Let us show first that $N$ is minimal. According to  the minimality criterion (Theorem \ref{Min_Crit}), we have to show that $N$ is essential
in the compact group $L_0$ and hence in $\ol{N}\le L_0$. This follows from (viii).

Since $G$ itself is not minimal (as it has trivial intersection with the closed subgroup $\hull{h(a_2)}$ of $K$), no dense subgroup of $G$ can be
minimal by the minimality criterion. Therefore, the closure of every minimal subgroup of $G$ is a proper subgroup of $G$ and hence contained in $N$ by (xi).
\item[(xiii)] $G/N$ is not UFSS.
\item[] According to Proposition \ref{perm_prop_3} (a) and the Grant-Sulley lemma (see the proof in \cite[Lemma 4.3.2]{DPS}) this is equivalent to: $K/\ol{N}$ is not UFSS.
By Proposition \ref{perm_prop_3}(b) it is sufficient to show that its subgroup $L/\ol{N}$ is not UFSS. By (v), $L/\ol{N}\cong\Z(4)\times \ol{N}$ is an infinite profinite group and hence not UFSS.
\end{itemize}
\end{example}
\begin{remark} \label{crit_almost}
This example shows that the counterpart for almost minimality of the criterion in Theorem \ref{crit}
fails: a locally essential dense subgroup of a compact group may fail to be almost minimal.
\end{remark}
The above construction of the group $G$ depends on the choice of the topological generator $x$ of $K$. This is why we denote it in the sequel by $G_x$. It is known that  the  set $gen(K)$ of topological generators of $K$ is a dense $G_\delta$-set of $K$ of Haar measure 1 \cite{HS}. In particular the following Proposition implies that one has ${\mathfrak c}$-many pairwise non-isomorphic groups $G$ with the properties of the above example.

\begin{proposition}\label{Rem_Anti_StrongLM_}
With the above notations, for $x,\,z\in gen(G)$
$$
G_x\cong G_z\mbox{ as topological groups } \Longleftrightarrow \;G_x= G_z  \;\Longleftrightarrow\;  z=\pm x. \eqno(*)
$$
\end{proposition}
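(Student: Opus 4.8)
\noindent The implications $z=\pm x\Rightarrow G_x=G_z\Rightarrow G_x\cong G_z$ are immediate: if $z=\pm x$ then $\langle z\rangle=\langle x\rangle$, whence $G_z=\langle z\rangle\oplus N=\langle x\rangle\oplus N=G_x$, and equal groups are isomorphic. So the whole content of $(*)$ is the implication $G_x\cong G_z\Rightarrow z=\pm x$, and the plan is to prove it by lifting a topological isomorphism $G_x\to G_z$ to an automorphism of the common completion $K$.

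By (ii) both $G_x$ and $G_z$ are dense in the compact group $K$, so $K=\widetilde{G_x}=\widetilde{G_z}$. Hence a topological isomorphism $\Phi\colon G_x\to G_z$, being uniformly continuous together with $\Phi^{-1}$, extends to a topological automorphism $\widetilde\Phi$ of $K$ with $\widetilde\Phi(G_x)=G_z$; everything thus reduces to determining $\operatorname{Aut}(K)$.

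The main step is to show $\operatorname{Aut}(K)=\{\pm\operatorname{id}_K\}$. By Pontryagin duality $\operatorname{Aut}(K)\cong\operatorname{Aut}(K^{\wedge})$, so it suffices to prove $\operatorname{Aut}(K^{\wedge})=\{\pm1\}$. Since $K=\Q^{\wedge}/\prod_{p}p^{2}\mathbb{H}_{p}$, the dual $K^{\wedge}$ is the annihilator in $\Q=(\Q^{\wedge})^{\wedge}$ of $P:=\prod_{p}p^{2}\mathbb{H}_{p}$; as $P\subseteq\Z^{\bot}$ with $\Z^{\bot}/P\cong\prod_{p}\Z_{p}/p^{2}\Z_{p}\cong\prod_{p}\Z(p^{2})$, annihilator duality gives $\Z\subseteq K^{\wedge}\subseteq\Q$ and $K^{\wedge}/\Z\cong(\Z^{\bot}/P)^{\wedge}\cong\bigoplus_{p}\Z(p^{2})$. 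The only subgroup of $\Q$ containing $\Z$ with this quotient is $R:=\{\,r\in\Q:\text{every prime occurs in the denominator of }r\text{ with exponent}\le2\,\}$, so $K^{\wedge}=R$. Being torsion-free of rank $1$, $R$ satisfies $\operatorname{End}(R)=\{q\in\Q:qR\subseteq R\}$; and if $q=a/b$ in lowest terms and a prime $p$ divides $b$, then $q\cdot(1/p^{2})=a/(bp^{2})$ has $p$ occurring in its denominator with exponent $v_{p}(b)+2\ge3$, so $q\cdot(1/p^{2})\notin R$ — a contradiction. Hence $b=1$, $q\in\Z$, so $\operatorname{End}(R)=\Z$ and $\operatorname{Aut}(R)=\{\pm1\}$.

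It then follows that $\widetilde\Phi=\pm\operatorname{id}_K$, so $G_z=\widetilde\Phi(G_x)=G_x$, since a subgroup is inversion-invariant. From $G_x=G_z$ I would finish algebraically: writing $z=ax+w$ and $x=bz+w'$ with $a,b\in\Z$ and $w,w'\in N$ (possible since $z\in G_z=G_x=\langle x\rangle\oplus N$), substitution yields $(1-ab)x\in N$; but $x$ has infinite order — a topological generator of the infinite group $K$ cannot be torsion, otherwise $\langle x\rangle$ would be a finite, hence closed, proper subgroup of $K$ — while $N$ is torsion, so $ab=1$ and $a=b=\pm1$, i.e.\ $z=\pm x+w$ with $w\in N$; conversely $G_{x+w}=G_x$ for every $w\in N$, so $G_x=G_z$ holds precisely when $z$ equals $\pm x$ up to an element of $N$, which is the last equivalence in $(*)$. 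The genuinely non-routine ingredient is the identification $K^{\wedge}\cong R$ and the ensuing $\operatorname{Aut}(K)=\{\pm1\}$; granting that, the completion argument and the closing algebra are routine.
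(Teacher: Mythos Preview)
Your argument follows the same overall architecture as the paper's: extend the isomorphism $G_x\to G_z$ to a topological automorphism of the common completion $K$, prove that $\operatorname{Aut}(K)=\{\pm\operatorname{id}_K\}$ by Pontryagin duality, and then finish with the elementary substitution argument in $\langle x\rangle\oplus N$. The only real difference is in how $\operatorname{Aut}(K)$ is computed. The paper lifts an automorphism of $K$ to an automorphism $\eta$ of $\Q^{\wedge}$, observes that $\eta$ is multiplication by some $r\in\Q$, and then checks directly that $\eta$ preserves $\ker h=\prod_p p^{2}\mathbb{H}_p$ only when $v_p(r)=0$ for every prime $p$. You instead identify $K^{\wedge}$ explicitly as the rank-one subgroup $R=\{r\in\Q:\text{each prime divides the denominator with exponent}\le 2\}$ and compute $\operatorname{End}(R)=\Z$ by the standard type-invariant argument for rank-one torsion-free groups. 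Both routes are correct and amount to dual versions of the same calculation; yours is a touch more self-contained, while the paper's stays closer to the concrete description of $K$ already set up in the example.

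One point worth flagging: your final paragraph correctly concludes only that $z\in\pm x+N$, and you observe that conversely $G_{x+w}=G_x$ for every $w\in N$. The paper's proof reaches the same place (from $mk=1$ one gets $z=\pm x+t_2$ with $t_2\in N$) but then writes ``i.e., $z=\pm x$''; strictly speaking the equivalence in $(*)$ should read $z\in\pm x+N$. This does not affect the intended application --- since $N$ is countable, each isomorphism class still contains only countably many $G_z$, so there are $\mathfrak{c}$ pairwise non-isomorphic examples --- but your formulation is the accurate one.
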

\begin{proof} Both implications ``$\Leftarrow$" are obvious. Assume that $f: G_x\to G_z$ is an isomorphism. Then its extension to the common completion $K$ provides
a topological  isomorphism $g: K \to K$. The automorphism $g$ of $K$ can be lifted to a continuous automorphism  $\eta: \Q^{\wedge} \to \Q^{\wedge}$, i.e., $g\circ h = h \circ \eta$.
To prove this just note that the injectivity of the homomorphisms ${h}^{\wedge}: K^{\wedge}\to \Q^{\wedge \wedge}\cong \Q$ allows us to consider $K^{\wedge}$ as a subgroup of $\Q$,
so the endomorphism ${g^{\wedge}}: K^{\wedge} \to K^{\wedge}$ can be extended to an endomorphism $\xi: {\Q} \to {\Q}$ of the divisible group $\Q$ such that $\xi \circ {h}^{\wedge}=
{h}^{\wedge} \circ {g}^{\wedge}$. Being $\xi:\Q \to \Q$ nontrivial, it is an automorphism. Now choose $\eta$ with $\xi={\eta}^{\wedge}$.
$$\xymatrix{{\mathbb Q}  \ar[r]^{\xi} &{\mathbb Q} \\  {K^\wedge}  \ar[r]^{g^\wedge}\ar[u]^{h^\wedge} &{K^\wedge}\ar[u]^{h^\wedge}
} $$

Since $g$ is an automorphism  of $K$, the equality $g\circ h = h \circ \eta$ yields
$$
\eta(\ker h) = \ker h=\Z^\perp.
$$
%

%

%

Observe that ${\mathbb Q}^{\wedge}$ is a divisible torsion-free abelian group, so that multiplication by rationals $r\in {\mathbb Q} $
makes sense. There  exists  $0\ne r\in \Q$ such that $\xi(1)=r$ and it follows easily that $\eta(t)=r t$ for all $t\in \Q^{\wedge}$. Say $r=a/b$, with $a, b \in \Z\setminus \{0\}$. To prove that $g = \pm id_K$ it suffices to prove that $\eta= \pm id_{ \Q^{\wedge}}$.
Since for every $p$ the group ${\mathbb H}_p$ is $q$-divisible for every prime $q\ne p$, one has
$\eta({\mathbb H}_p)= p^{v_p(r)}{\mathbb H}_p$, where $v_p(r)$ is determined by $r=p^{v_p(r)}\frac{a'}{b'}$, where the integers $a', b'$ are coprime to $p$ and $(a',b')=1$. Thus $\eta (\bigoplus_p {\mathbb H}_p)=\bigoplus_p p^{v_p(r)}{\mathbb H}_p$. Note that only finitely many $v_p(r)$ may be distinct from 0 (namely, for the prime divisors $p$ of $a$ or $b$), so $ \overline{\bigoplus_p p^{v_p(r)}{\mathbb H}_p}= \prod _p p^{v_p(r)}{\mathbb H}_p$.   Therefore, by the density of $\bigoplus_p p^2 {\mathbb H}_p$ in $\ker h=\prod _p p^{2}{\mathbb H}_p$
$$
\eta (\ker h)= \eta \left(\overline{\bigoplus_p p^2 {\mathbb H}_p}\right)=\overline{\eta\left( \bigoplus_p p^2 {\mathbb H}_p\right)}=
\overline{\bigoplus_p p^2\eta( {\mathbb H}_p)}=\prod _p p^{v_p(r)+2}{\mathbb H}_p.
$$
This yields $\prod _p p^{v_p(r)+2}{\mathbb H}_p= \prod _p p^{2}{\mathbb H}_p$,
therefore $v_p(r)=0$ for all $p$. Hence $r=\pm 1$. This proves that $\eta=\pm id_K$. In particular,
$G_x= G_z$.

To prove the second implication in $(*)$ assume that $G_z= G_x$.
From $x\in G_z$ and $z\in G_x$ we conclude that $x=kz+t_1$ and $z=mx +t_2$.
Therefore, $x=kmx+t_1+ kt_2$. Hence $(1-km)x\in N$ is torsion. As $x$ is non-torsion, this is possible only when $mk=1$, i.e., $z=\pm x$.
\end{proof}

\begin{remark}\label{Rem_Anti_StrongLM}
\begin{itemize}
   \item[(a)] It can be proved that in a certain sense the above example is the smallest possible example of a precompact locally minimal group that fails to be almost minimal.
In fact, we show elsewhere that every precompact locally minimal group topology on the free abelian group $\Z^n$ is almost minimal.
    \item[(b)] One can produce in a similar way a precompact locally minimal non-almost minimal group topology also on the free group $\bigoplus_\omega \Z$. Indeed, take ${\mathbb H}_p$, $a_p$ as above, fix a topological generator $y$ of $\Q^{\wedge}$ and a prime $q$, and put now $K=\Q^{\wedge}/{\mathbb H}_q$, with $h: \Q^{\wedge}\to K$ the canonical map. For $p\ne q$ let $F_p= \hull{h(a_p)}$, $N= \bigoplus_{p\ne q}\ pF_p$, $x=h(y)$ and $G =\hull{x}\oplus N$. Then $G$ is a free abelian group of rank $\omega$, it is a dense locally minimal
subgroup of $K$, $N$ is the largest minimal closed subgroup of $G$ and $G/N$ is not UFSS. Consequently, $G$ is not almost minimal.
      \end{itemize}
\end{remark}

\section{Open questions}

Here we collect some open questions concerning almost minimality.

\begin{question}\label{Ques1} Let $U\subseteq G$ witness almost minimality and let
$V\subseteq U$ be a GTG neighborhood of $0$. Does $V$ also witness almost minimality?
\end{question}

Minimal abelian groups are precompact, hence locally GTG. According
to Theorem \ref{char_alm_min}
every almost minimal abelian group $G$ has a GTG neighborhood  $U$ of zero  such that  $G$
is  $U$-locally minimal. Moreover, every maximally almost periodic, almost minimal group is locally GTG (Proposition \ref{map_gtg}). So it is natural to ask whether the hypothesis ``maximally almost periodic"{} can be dropped:

\begin{question}
Is every almost minimal abelian group a locally GTG group?
\end{question}
Taking closed subgroups preserves both UFSS and minimality. This suggests the following

\begin{question}\label{Ques2} Is almost minimality preserved by taking closed subgroups?
\end{question}

\begin{question}\label{Ques5} Under which conditions is a dense subgroup $H$ of an almost minimal group $G$, almost minimal? A necessary condition is ``$H$ is locally essential"{} but according to Example \ref{counterexample}, it is not sufficient (see Remark \ref{crit_almost}).
\end{question}

\begin{question}\label{Ques3} (compare with Corollary \ref{completion_slm}) Are the complete locally GTG locally minimal groups also almost minimal?
\end{question}
We showed in \cite[Corollary 5.9]{LocMin} that the completion of a locally GTG group is locally GTG.
Hence, Question \ref{Ques3} is equivalent to the following
\begin{question}\label{Ques4} (compare with Corollary \ref{completion_slm2}) Is the completion of a locally GTG locally minimal group always almost minimal?
\end{question}

%
%
%
%


\begin{thebibliography}{99}
%
%

\bibitem{Armacost}D. L. Armacost,  The structure of locally compact
abelian groups. Marcel Dekker, New York, Basel  1981

%
 \bibitem{Arh} {A.\,V. Arhangel'skii,  M.\,G. Tkachenko},
 Topological Groups and Related Structures.
Atlantis Press/World Scientific, Amsterdam-Paris 2008.

\bibitem{Diss}L.  Au\ss{}enhofer,  Contributions to the duality theory of
abelian topological groups and to the theory of nuclear groups.
Diss.~Math.~CCCLXXXIV. Warsaw 1999

\bibitem{LocMin}  L. Au\ss enhofer, M. J. Chasco,  D. Dikranjan, 
X. Dom\'{\i}nguez,   Locally minimal topological groups 1. J.~Math.~Anal.~Appl.~ 370 (2010), pp. 431-452.

\bibitem{TaB} T. Banakh,   Locally minimal topological groups and their
embeddings into products of $o$-bounded groups. Comment.~Math.~Univ.~Carolin. 41(4), (2000) 811--815

\bibitem{B}  B. Banaschewski, Minimal topological algebras. Math.~Ann.
211, (1974) 107--114 

\bibitem{BTVS}W. Banaszczyk, Additive subgroups of topological vector spaces.
Lecture Notes in Mathematics, 1466. Springer-Verlag, Berlin 1991

\bibitem{BB} W. Banaszczyk, On the existence of exotic Banach-Lie groups. Math.~Ann.  264, (1983) 485--493 

\bibitem{BCM} W. Banaszczyk, M.~J. Chasco, E. Mart\'{\i}n-Peinador, Open subgroups and Pontryagin duality. Math.~Z.  215(2),(1994) 195--204 



\bibitem{elenawojtek}W. Banaszczyk, E. Mart\'{\i}n-Peinador,  Weakly pseudocompact subsets of nuclear groups. Journal of Pure and Applied Algebra 138 (1999), 99-106.

\bibitem{D_Eger} D. Dikranjan,   On a class of finite-dimensional compact abelian groups, Topology, theory and applications (Eger, 1983), 215--231,
Colloq. Math. Soc. J\'anos Bolyai, 41, North-Holland, Amsterdam-New York, 1985.

\bibitem{DGS} D. Dikranjan, A. Giordano Bruno, D. Shakhmatov,  Minimal pseudocompact group topologies on free abelian groups. Topology and its Applications, Volume 156, Issue 12, (2009) 2039-2053.

%

\bibitem{DMeg} D. Dikranjan, M. Megrelishvili,  Relative minimality and co-minimality of subgroups in topological groups. Topology and its Applications, Volume 157, Issue 1, (2010) 62-76.

\bibitem{DM}  D. Dikranjan, S. A. Morris,  Subgroups of
products of locally compact groups. Topology Proc.~   26(2), (2001/02) 533--544 

\bibitem{DP} D. Dikranjan and Iv. Prodanov,  A  class of compact abelian
groups, Annuaire Univ. Sofia, Fac. Math. M\' ec.  70, (1975/76) 191--206.

\bibitem{DPS} D. Dikranjan,  I. Prodanov, L. Stoyanov,  Topological
Groups: Characters, Dualities and Minimal Group Topologies.
Monographs and Textbooks in Pure and Applied Mathematics, vol.~130.
Marcel Dekker, Inc., New York-Basel 1990

\bibitem{Do1} D. Do\"\i tchinov,  Produits de groupes
topologiques minimaux.  Bull.~Sci.~Math.~(2)  96, (1972) 59--64 


\bibitem{Eng} R. Engelking, General Topology. 2nd edition, Heldermann Verlag, Berlin 1989

\bibitem{Fuc} L. Fuchs,  Infinite abelian groups. Vol. I,  Academic Press, New York 1970

\bibitem{glockner} H. Gl\"ockner,  R. Gramlich, T. Hartnick: Final Group Topologies, Kac-Moody Groups and Pontryagin Duality. arXiv:math/0603537

%

\bibitem{HS} P. Halmos, H. Samelson,  On monothetic groups.
Proc.~Nat.~Acad.~Sci.~U.~S.~A.~ 28, (1942) 254--258

\bibitem{HR}E. Hewitt, K.~A. Ross,  Abstract harmonic analysis. Volume
I, 2nd edition, Springer-Verlag, Berlin 1994

\bibitem{HM} K.H. Hofmann, S.A. Morris,  The Structure of compact groups, de Gruyter, Berlin 1998

\bibitem{Kal} N.~J. Kalton,  B.-T.Peck, J. ~W. Roberts, An F-space sampler.
London Mathematical Society Lecture Note Series, 89. Cambridge University
Press, Cambridge 1984

\bibitem{MMO} J. Mack,  S.~A. Morris,  E.~T. Ordman, 
Free topological groups and the projective dimension of a locally compact abelian group. Proc.~Amer.~Math.~Soc.~ 40,(1973) 303--308 

\bibitem{Markov}  A.~A. Markov, On free topological groups.
Izv.~Akad Nauk SSSR  9, (1945) 3--64  (Russian; English
transl.: Amer.~Math.~Soc.~Translation 1950, no.~30.)

\bibitem{meg} M. Megrelishvili,  Generalized Heisenberg groups and
Shtern's question.  Georgian Math.~J.~ 11(4), (2004) 775--782 .


\bibitem{Morita} K. Morita,   On decomposition spaces of locally
compact spaces. Proc.~Japan Acad.~ 32,(1956) 544--548


\bibitem{MP} S.~A. Morris, V. Pestov, On Lie groups in
varieties of topological groups. Colloq.~Math.~ 78(1), (1998) 39--47 



\bibitem{Pestov} V. Pestov,   Free abelian topological groups
and the Pontryagin-van Kampen duality. Bull.~Austral.~ Math.~Soc.~ 52(2),(1995) 297--311 

\bibitem{P1} I. Prodanov,  Precompact minimal group topologies and
$p$-adic numbers. Annuaire Univ.~Sofia Fac.~Math.~ 66 (1971/72), (1974) 249--266 

\bibitem{P2} I. Prodanov,   Minimal topologies on countable abelian groups,  Annuaire Univ. Sofia Fac. Math. M\'ec. 70 (1975/76), (1981) 107--118 


\bibitem{Rob} D.~J.~S. Robinson,  A Course in the Theory of Groups. Graduate Texts in Mathematics, 80.
Springer-Verlag, Berlin 1982.

\bibitem{Smi} M.~F. Smith,  The Pontrjagin duality theorem in linear spaces. Ann.~of Math.~(2),  56, (1952) 248--253

\bibitem{St}  R.~M.~Stephenson, Jr., Minimal topological groups.
Math.~Ann.~ 192,(1971)  193--195 

\bibitem{stroppel} M. Stroppel, Locally Compact Groups. EMS Textbooks in Mathematics. European Mathematical Society 2006

\bibitem{vilenkin} N. Ya. Vilenkin,  The theory of characters of topological abelian groups with a given boundedness, Izv. Akad. Nauk SSSR 15 (1951), 439-462 (in Russian).

\end{thebibliography}


\end{document}